\definecolor{MyDarkblue}{rgb}{0,0.08,0.50}
\definecolor{Brickred}{rgb}{0.65,0.08,0}
\newtheorem*{theorem*}{Theorem}
\newtheorem{theorem}{Theorem}[section]
\newtheorem{lemma}[theorem]{Lemma}
\newtheorem{proposition}[theorem]{Proposition}
\newtheorem{corollary}[theorem]{Corollary}
\newtheorem{claim}[theorem]{Claim}
\theoremstyle{definition}
\newtheorem{definition}[theorem]{Definition}
\newtheorem{remark}[theorem]{Remark}
\newcommand{\eps}{\varepsilon}
\newcommand{\cA}{\mathcal{A}}
\newcommand{\cG}{\mathcal{G}}
\newcommand{\sss}{\scriptscriptstyle}
\DeclareSymbolFont{extraup}{U}{zavm}{m}{n}
\DeclareMathSymbol{\varheart}{\mathalpha}{extraup}{86}
\DeclareMathSymbol{\vardiamond}{\mathalpha}{extraup}{87}
\newcommand{\R}{\mathbb{R}}
\newcommand{\Z}{\mathbb{Z}}
\newcommand{\cov}{{\rm cov}}
\newcommand*{\be}{\begin{equation}}
	\newcommand*{\ee}{\end{equation}}
\newcommand*{\ba}{\begin{aligned}}
	\newcommand*{\ea}{\end{aligned}}
\newcommand*{\barr}{\begin{array}{c}}
	\newcommand*{\earr}{\end{array}}
\def\namedlabel#1#2{\begingroup
	#2%
	\def\@currentlabel{#2}%
	\phantomsection\label{#1}\endgroup
}
\newcommand{\bes}{\begin{equation*}}
	\newcommand{\ees}{\end{equation*}}
\newcommand{\cm}[1]{{\color{orange} #1}}
\newcommand{\cmc}[1]{{\color{orange}{ \bf [~Carlos:\ }\emph{#1}\textbf{~]}}}
\numberwithin{equation}{section}
\newcommand{\invisible}[2]{%
    \ifthenelse{\isempty{#1}}
    {}
    {#2}
}
\newcommand{\D}[1]{\D_{#1}^{\sss (u,v)}}
\newcommand{\dm}[1]{{\color{red} #1}}
\newcommand{\dmc}[1]{{\color{red}{\bf [~Dieter:\ } \color{red}{\em #1}\color{red}{\bf~]}}}
\newcommand{\revisor}[3]{{\color{blue}{\bf [~Revisor:\ } \color{blue}{\em #1}\color{blue}{\bf~]}}}
\newcommand{\leqnomode}{\tagsleft@true\let\veqno\@@leqno}
\newcommand{\reqnomode}{\tagsleft@false\let\veqno\@@eqno}
\newlength{\tagmarginsep} 
\tikzstyle{vertex}=[circle,fill=orange!60,minimum size=10pt,inner sep=0pt]
\tikzstyle{tedge} = [draw,ultra thick,->,>=stealth, orange]
\tikzstyle{esq}=[circle,fill=white,minimum size=10pt,inner sep=0pt]
\tikzstyle{up}=[<-,>=stealth]
\title{On the jump of the cover time in random geometric graphs}
\author[Carlos Martinez-Arevalo]{Carlos Martinez-Arevalo}
\address[Carlos Martinez]{IAM, University of Bonn, Germany}
\email{cmartine@uni-bonn.de}
\author[Dieter Mitsche]{Dieter Mitsche}
\address[Dieter Mitsche]{IMC, Pontificia Universidad Católica, Avda.\ Vicu\~na Mackenna 4860, Santiago, Chile}
\email{dieter.mitsche@uc.cl}
\thanks{Dieter Mitsche has been supported by Fondecyt grant 1261336. Carlos Martinez-Arevalo has been supported by the Deutsche Forschungsgemeinschaft (DFG, German Research Foundation) – Projektnummer 552316285.}
\begin{document}

\begin{abstract}
    In this paper we study the cover time of the simple random walk on the giant component of supercritical $d$-dimensional random geometric graphs on $\mathrm{Poi}(n)$ vertices. We show that the cover time undergoes a jump at the connectivity threshold radius $r_c$: with $r_g$ denoting the threshold for having a giant component, we show that if the radius $r$ satisfies $(1+\eps)r_g \le r \le (1-\eps)r_c$ for $\eps > 0$ arbitrarily small, the cover time of the giant component is asymptotically almost surely $\Theta(n \log^2 n$). On the other hand, we show that for $r \ge (1+\eps)r_c$, the cover time of the graph is asymptotically almost surely $\Theta(n \log n)$ (which was known for $d=2$ only for a radius larger by a constant factor). Our proofs also shed some light onto the behavior around $r_c$.
\end{abstract}
\maketitle
Keywords: cover time, random walk, random geometric graphs
\noindent

MSC Class: 05C80, 60D05, 05C81
\section{Introduction}


Random geometric graphs were originally introduced by Gilbert~\cite{Gil61} as a model for telecommunication networks, and since then they received a lot of attention both from a theoretical point of view (see for example the monograph of Penrose~\cite{Pen03}, or also~\cite{Pen16}) as well as from an applied point of view (see for example~\cite{Aky02, Nek07, FLMPSSSvL19}).


\subsection{Model and main results}
The model of \emph{random geometric graphs (RGGs)} is defined as follows:
For $r>0$ and $d, n\in\mathbb{N}$ we denote by $\mathcal{G}_n^{r,d}$ (or rather $\mathcal{G}_n$ when the dependence on $r$ and $d$ is clear)  the RGG of radius $r$ inside the $d$-dimensional cube $\Lambda_n = \left[ \frac{-n^{1/d}}{2}, \frac{n^{1/d}}{2}\right]^d$ of volume $n$ whose vertex set $\mathcal{V}_n$ is the set of points of a Poisson Point Process (PPP) of unit intensity in $\Lambda_n$ and edge set 
\begin{equation}
    \mathcal{E}_n^r = \{\{x,y\}:x,y \in \mathcal{V}_n, \lVert x-y \rVert_2 \leq r\}.
\end{equation} 
We define the infinite counterpart $\mathcal{G}^{r,d}$ similarly, but replace $\Lambda_n$ by $\mathbb{R}^d$ with one artificial vertex added at the origin. We define then the \emph{thermodynamical threshold $r_g$} as 
\begin{equation}\label{eq:definition_RG}
\begin{aligned}
r_g=r_g(d)=\inf\{ r: \mathbb{P}(\text{the origin is in an infinite component of $\cG$} )>0\}.
\end{aligned}
\end{equation}
It is well 
known that $0<r_g<\infty$ for every $d\ge 2$, although its exact value is not known (see~\cite{Pen03} for estimates in $d=2$). When restricting $\mathbb{R}^d$ to $\Lambda_n$, denote  by $L_1(\mathcal{G}_n)$ or simply, $L_1$, the largest  component of $\mathcal{G}_n$ (ties broken arbitrarily; note that in this case the largest component not necessarily contains the origin). When $r > r_g$, we will refer to this component also as the \emph{giant component} or simply giant.
 Define also \emph{the connectivity threshold $r_c=r_c(d)$} as follows: for any $\eps \ge 0$,
\begin{equation}\label{eq:defintion_RC}
\begin{aligned}
r_c^n(d)=\inf\{ r: \mathbb{P}(\cG_n^{r,d} \text{ is connected for $r \ge (1+\eps)r_c$}) \ge \frac12\}
\end{aligned} 
\end{equation}
We will drop $n$ from the notation above when the setup is clear. It is well known that $r_c$ exists and that $r_c=(\log n /V_d)^{1/d}$ (see~\cite{GuptaKumar}), where $V_d$ is the volume of the unit ball in $d$ dimensions. 
 


On the other hand, the simple random walk on the vertices of a  finite and connected graph $G=(V,E)$ is a discrete-time stochastic process with random variables $X_0, X_1, \ldots$ such that $X_0=v$ for some $v \in V$, and for $i \ge 0$, $X_{i+1}$ is a vertex chosen uniformly at random from the neighbors of $X_i$. Due to its numerous applications in computer science~\cite{AF}, one of the most studied properties of random walks is the \emph{cover time}, defined as 
$$
\tau_{cov}(G)=\max_{v \in V, X_0=v}\mathbb{E}[ \min\{ t \ge 0: \bigcup_{s=0}^t \{X_s\}=V \}].
$$
In words, the cover time is the expected time it takes to visit all vertices, when starting from a worst-case vertex.

In this paper we consider the cover time of the simple random walk in $L_1$. In order to state our results, we use standard asymptotic notation: specifically, if $(a_n)_{n}$, $(b_n)_{n}$ are sequences of real numbers, we write
$a_n = O(b_n)$ if for some positive constant $C > 0$ and non-negative integer $n_0$ it holds that~$|a_n| \le C|b_n|$
for all $n > n_0$. Also, we write $a_n = \Omega(b_n)$ if $b_n = O(a_n)$, and $a_n=\Theta(b_n)$ if~$a_n = O(b_n)$ and
$a_n=\Omega(b_n)$. We write $a_n = o(b_n)$ if~$a_n/b_n\to 0$ as $n\to\infty$ and $a_n=\omega(b_n)$ if~$b_n=o(a_n)$. 
Previously, Avin and Ercal (see~\cite{covertimergg}) showed that for $G \in \mathcal{G}_n^{r,2}$ with $r^2 > 8 \log n$, that is, a factor $8\pi$ above the connectivity threshold, $\tau_{cov}(G)$ is of the order $n\log n$ a.a.s., where here and in the following, for a sequence $\{\cA_n\}_{n}$ of events, we say that $\cA_n$ holds 
\begin{center} 
\emph{asymptotically almost surely (a.a.s.)}, if $\Pr[\cA_n] \to 1$, as $n \to \infty$. 
\end{center}
Also, Cooper and Frieze~\cite{cooper2011cover} showed that for $d \ge 3$ and $\cG_n$ with $r=(c \log n / V_d)^{1/d}$, with $c > 1$ being arbitrary, a.a.s.\ $\tau_{cov}=c \log \left(\frac{c}{c-1}\right) n \log n$. Note that their result does not apply for $d=2$. In this paper, we look at sparser graphs not dealt with by the two papers. In particular, we may assume throughout the paper that $r^d=O(\log n)$.
Our first main result then is the following:
\begin{theorem}\label{ref:main1}
    Let $\eps > 0$ be arbitrarily small, and let $d \ge 2$ 
    . Let  $(1+\eps)r_g \le r \le (1-\eps)r_c$, and consider $\cG_n^{r,d}$. Then, a.a.s., $$\tau_{cov}(L_1)=\Theta(n \log^2 n),$$ where the hidden constants depend only on $\eps,d$. 
\end{theorem}
Our second main result complements the previous result in $d=2$ and closes the gap of previous papers above the connectivity threshold:
\begin{theorem}\label{ref:main2}
    Let $\eps > 0$ be an arbitrarily small constant. Let $d=2$. Let $r \ge (1+\eps)r_c$, and consider $\cG_n^{r,d}$. Then, a.a.s.\,
    $$\tau_{cov}(G)=\Theta(n \log n),$$ where the hidden constant depends only on $\varepsilon$.
\end{theorem}
\begin{remark}
       Theorems~\ref{ref:main1} and~\ref{ref:main2}, together with known results for $d \ge 3$, show that for $\cG_n$, $\tau_{cov}$ undergoes a jump at $r_c$, decreasing from $\Theta(n \log^2 n)$ to $\Theta(n \log n)$. A more detailed look into this jump is part of future work (see also concluding remarks below). 
\end{remark}

\subsection{Further related work} Apart from the two aforementioned papers on the cover time of RGGs, the cover time is a classic parameter in the study of random walks that has been analyzed in many contexts and on different graphs: in a geometric setup that is  the most similar to the current work, the cover time of the $d$-dimensional toroidal grid of side length $n$ (and thus with $n^d$ vertices) was studied in a series of papers: first, for $d=2$, an upper bound of the cover time of the correct order was first found by Aldous~\cite{Aldous89}, and a non-matching first order lower bound by Lawler~\cite{Lawler92}. Finally, the correct first order was obtained in the landmark paper of Dembo, Peres, Rosen and
Zeitouni~\cite{DPRZ04}: they showed that the cover time of the $2$-dimensional torus satisfies
$$
\tau_{cov} / (n^2 \log^2 n ) \to 4/\pi,
$$
as $n \to \infty$, where the convergence is in probability. Next, Ding~\cite{Ding} improved the result by showing that $\sqrt{\tau_{cov}/n^2} - \frac{2}{\sqrt{\pi}}\log n$ is of order $\log \log n $ with probability tending to $1$ as $n \to \infty$. Abe~\cite{Abe2} then found the second order term: he showed that there exists $c \in (0,1)$, such that
$$
2 \log n - \log \log n - (\log \log n)^c \le \frac{\tau_{cov}}{\frac{2}{\pi}n^2 \log n} \le 2 \log n - \log \log n + (\log \log n)^c
$$ with probability tending to $1$ as $n \to \infty$, similar to a previous bound in the continuous setup of Brownian motion by Belius and Kistler~\cite{BeliusKistler}. 
Recently, using a comparison with the extremal landscape of the discrete
Gaussian free field, Louidor and Saglietti~\cite{LS24} went even further: they showed that the square root of the cover time normalized by the size of the subset is tight around $\frac{1}{\sqrt{\pi}}\log n-\frac{1}{4\sqrt{\pi}}\log \log n$. Very recently, the same authors~\cite{LS26} improved this and found the scaling limit. The latter approach was inspired by an analysis of the cover time on the binary tree of depth $n$: in this case, Cortines, Louidor and Saglietti showed in~\cite{CLS21} that the cover time, normalized by $2^{n+1}n$, and then centered by $(\log 2)n-\log n$, admits a weak limit as $n \to \infty$. The limiting distribution in this case obtained was a Gumbel random variable with rate one, shifted randomly
by the logarithm of the sum of the limits of the derivative
martingales associated with two negatively correlated discrete
Gaussian free fields on the infinite version of the tree.
For $d \ge 3$, it is known for a long time  that the order of the cover time is $\Theta(n^d \log n)$ (see for example~\cite{AF}). Belius in~\cite{Belius} then showed that, when normalized by the volume of the torus and then centered by a constant times $\log n^d$, the cover time converges in law to a Gumbel distribution.

In the context of a percolated grid (of side length $n$), when edges are independently present with probability $p < 1$, Abe in~\cite{aberesistance} showed that for any $d \ge 2$, the cover time of the giant component of supercritical Bernoulli percolation restricted to a grid of side length $n$ in $\mathbb{Z}^d$ is a.a.s.\ of the order $n^d \log^2 n$, 
thereby showing a quantitative difference of unpercolated and percolated grid for $d\ge 3$.

 In the context of random hyperbolic graphs, recently, Kiwi, Schepers and Sylvester studied the cover time of the giant component of random hyperbolic graphs on $n$ vertices. They showed that the cover time of the giant component of these graphs is $\Theta(n \log^2 n)$ (see~\cite{Kiwi}).
 
In another context, on the random graph $\mathcal{G}(n,p)$, a series of papers of Cooper and Frieze analyzed the cover time of the largest component of $\mathcal{G}(n,p)$ for different values of $p$. Frieze, Pegden and Tkocz (see~\cite{FPT22}) obtained more precise estimates for the cover time of the emerging giant component, improving on previous results of Barlow, Ding, Nachmias and Peres~\cite{BDNP} and Cooper, Frieze, Lubetzky~\cite{CFL}.

Finally, for general graphs, Feige showed in~\cite{feige1995lower, feige1995upper} that for any connected graph on $n$ vertices, the cover time is a.a.s.\ at least $(1-o(1)) n \log n $ and at most $(4/27)n^3+o(n^3)$, and he also gave graphs attaining the asymptotic bounds. 
From an algorithmic point of view, for general graphs, Kahn, Kim, Lóvasz and Vu in~\cite{kahn2000cover} first established a $(\log \log n)^2$-polynomial time approximation algorithm for the cover time. Later this was improved by Ding, Lee and Peres, who in a breakthrough result in~\cite{DLP12} found a polynomial-time approximation algorithm approximating the cover time up to a constant factor (later this result was refined by Ding~\cite{DingAlg} and Zhai~\cite{ZhaiAlg}).

\subsection{Outline of proofs}
The approach we follow is based on the idea of seeing the graph as an electric network. The main focus of the paper is therefore to obtain sharp bounds on the effective resistance.

The lower bound of the proof of Theorem~\ref{ref:main1} is comparably simpler: when $(1+\eps)r_g\le r\le cr_c$ for some $c$ small enough, we prove that there are polynomially many pending paths of order $\log n /r^d$ (as in the paper of Abe~\cite{aberesistance}). In the case $cr_c\le r\le  (1-\eps)r_c$, we will prove that there are many vertices with degree $1$ that belong to the giant component. 
 The upper bound is inspired by Abe's result in~\cite{aberesistance} on $\tau_{cov}$ of the supercritical percolation cluster on $\mathbb{Z}^d$: we first need to adapt his backbone (which can be understood as a large chunk of $L_1$ that is almost everywhere) construction by using a suitable renormalization scheme on the random geometric graph for fixed radius $r$. We then use this structure to build a flow. Extra care is needed, as there are additional obstacles that do not appear in Bernoulli percolation: crossing may be arbitrarily close to one another, and we may have to extract induced paths from paths in densely populated areas. When the typical degree is tending to infinity, one extra splitting of the flow is introduced in order to bound the effective resistance between any two vertices of a certain backbone by $O((\log n) / r^{2d})$. 
 
 The proof of Theorem~\ref{ref:main2} elaborates on the idea of the upper bound of Theorem~\ref{ref:main1} by an additional ingredient: on the one hand, since the typical degree is high, we need again the extra splitting from the case when the radius tends to infinity. On the other hand, we then show that other vertices outside this backbone cannot be separated from the backbone by taking out too few vertices: we will find many short internally vertex-disjoint paths between any vertex outside and some vertex inside the backbone, on which we can split the flow again evenly.
 
\textbf{Further notation.} For $\mathrm{j}\in \mathbb{Z}^d$, we denote by $\mathrm{j}_i$ the $i$-th coordinate. We denote by $e_i$ the $i$-th standard basis vector in $d$ dimensions. For a set $A$, we denote its cardinality by $|A|$. For $G=(V,E)$ and $x,y \in V$, we write $x \sim y$ for $\{x,y\} \in E$. We denote by $d_G(x,y)$ the graph distance between $x$ and $y$, that is, the number of edges on a shortest path between $x$ and $y$. If $x$ and $y$ are identified with points, we also denote by $d_E(x,y)$ the Euclidean distance between $x$ and $y$. We say that two paths of vertices $u_0:=x, u_1,\ldots, u_m, u_{m+1}:=y$ and $v_0:=x, v_1, \ldots, v_n, v_{n+1}:=y$  between $x$ and $y$ are internally vertex-disjoint, if $\{u_0,\ldots,u_{m+1}\} \cap \{v_0,\ldots,v_{n+1}\}=\{x,y\}$. A path of vertices $v_1,\ldots, v_k$ in $G=(V,E)$ is induced, if $\{v_i,v_j\} \in E $ implies $j=i+1$ for some $i \in \{1,\ldots, k-1\}$. Note that in an induced path of $\cG_n$ for every vertex of the path except for the first and last there are exactly two other vertices in the path at Euclidean distance at most $r$, and for the first and last vertex there is only one other vertex in the path at Euclidean distance at most $r$. Observe also, by taking out vertices if needed, one may obtain an induced path from a path that is not, and that has the same endpoints as the original path. Finally, in order to make notation more concise, we associate $\cG_n$ with its vertex set, so that we will write $x \in \cG_n$ instead of $x \in V(\cG_n)$.

\textbf{Plan of paper}. In Section~\ref{sec:Prelim} we recall basic facts on random walks, in particular on the effective resistance, as well as facts on random geometric graphs and Bernoulli percolation. In Section~\ref{sec:renormalization} we then introduce a renormalization scheme that will turn out to be very useful in our proofs, and in Section~\ref{sec:randompaths} we build on this scheme to construct random paths of good cubes. Section~\ref{sec:lower_bound_fixed_radius} is dedicated to the proof of the lower bound of Theorem~\ref{ref:main1}. In Section~\ref{sec:upperboundfixedradius} we then show the corresponding upper bound of Theorem~\ref{ref:main1}  for $r=\Theta(1)$. In Section~\ref{sec:upper_bound_covTime_rgg_nonFixed_r} we extend this to the case $r=\omega(1)$. Finally, in Section~\ref{sec:upper_bound_above_connectivity} we prove Theorem~\ref{ref:main2}, and we mention some open problems in Section~\ref{sec:conclusion}.

\section{Preliminaries}\label{sec:Prelim}
In this section we recall basics on different topics, and we prove preliminary results.

\subsection{Markov chains and random walks}
Fix throughout this section a connected finite graph $G=(V,E)$. A network is a connected graph $G=(V,E)$ together with a set of positive real values $(\mathfrak{r}(e))_{e\in E}$, the \emph{resistances} of the edges. We denote the network simply as $G$ and make no distinction with the underlying graph. We define the conductance $\mathfrak{c}$ of the edge $e$ as $\mathfrak{c}(e)=\frac{1}{\mathfrak{r}(e)}$. Given a network, we define the associated transition probabilities by
\begin{equation}\label{eq:transition_probabilities}
    P(x,y)=\frac{\mathfrak{c}({x,y})}{\mathfrak{c}(x)},
\end{equation}
where $\mathfrak{c}(x)=\sum_{y\sim x} \mathfrak{c}(x,y)$. For $\{x,y\} \in E$, denote by $\overrightarrow{e}=\overrightarrow{xy}$ an oriented edge, oriented from $x$ to $y$, and by $-\overrightarrow{e}=\overrightarrow{yx}$ the oriented edge, oriented from $y$ to $x$. For $x,y \in V$, a \emph{unit flow} from $x$ to $y$ is a function $\theta$ on oriented edges such that $\theta(\overrightarrow{xy})=-\theta(\overrightarrow{yx})$, $\sum_{a, x\sim a}\theta(\overrightarrow{xa})=1$, $\sum_{a, a\sim y}\theta(\overrightarrow{ay})=1$ and
\begin{equation}
    \sum_{a, z\sim a} \theta(\overrightarrow{za}) = 0 \text{   for all $z\in V \setminus \{x,y\}$.}
\end{equation} 
The next definition is central for our investigations:
\begin{definition}\label{def:effectiveresistance}
 The \emph{effective resistance} between $x,y\in V$ is defined as
\begin{equation}
    \mathcal{R}(x\longleftrightarrow y,G)= \inf \{ \sum_{e \in E} \theta^2(e)\mathfrak{r}(e): \text{ $\theta$ is a unit flow from $x$ to $y$}\}.
\end{equation}   
\end{definition}

\begin{remark}\label{rem:effective_resistance}
    \begin{enumerate}
    By definition, we obtain upper bounds on the effective resistance by specifying a unit flow between $x$ and $y$. A trivial upper bound for $\mathcal{R}(x\longleftrightarrow y,G)$ is therefore given by the graph distance between the vertices (define the unit flow over the shortest path with all edges having flow equal to $1$).
    \end{enumerate}
    
\end{remark}
 The effective resistance can be interpreted as the value that appears when replacing the entire network with only one edge between $x$ and $y$ that mimics the resistance between them in the original network. A \emph{subnetwork} $G'$ of $G$ is a connected subgraph of $G$ with the same values of resistances on the remaining edges. We now collect a few results that will turn very useful for us. First, effective resistance can not decrease when we delete edges:
\begin{theorem} [Rayleigh, Theorem 9.12 of~\cite{levinperes}]\label{thm:rayleigh}
    Let $G'$ be a subnetwork of $G$. Then, for $x,y \in V(G')$ we have
    \begin{equation}
        \mathcal{R}(x\longleftrightarrow y,G) \leq \mathcal{R} (x\longleftrightarrow y,G').
    \end{equation}
\end{theorem}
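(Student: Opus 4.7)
The plan is to use the variational characterization in Definition~\ref{def:effectiveresistance} directly: to bound $\mathcal{R}(x \longleftrightarrow y, G)$ from above, it suffices to exhibit a unit flow from $x$ to $y$ in $G$ whose energy is at most $\mathcal{R}(x \longleftrightarrow y, G')$. The natural candidate is obtained by \emph{extending by zero} any unit flow on the sub-network $G'$ to the full network $G$.

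More precisely, first I would pick an arbitrary unit flow $\theta'$ from $x$ to $y$ in $G'$ and define $\theta$ on the oriented edges of $G$ by $\theta(\overrightarrow{e}) = \theta'(\overrightarrow{e})$ if the underlying edge lies in $E(G')$, and $\theta(\overrightarrow{e}) = 0$ otherwise. Then I would check the three defining properties of a unit flow from $x$ to $y$ in $G$:
\begin{itemize}
    \item Anti-symmetry $\theta(\overrightarrow{e}) = -\theta(-\overrightarrow{e})$ is inherited from $\theta'$ on $E(G')$ and is trivial on the remaining edges.
    \item At the source $x$ and sink $y$, the net flow condition reduces to the corresponding condition for $\theta'$, since all edges incident to $x, y$ not in $E(G')$ carry zero flow.
    \item Flow conservation at a vertex $v \in V(G) \setminus \{x,y\}$ splits into two cases: if $v \in V(G')$, then the sum over $G$-neighbors of $v$ equals the sum over $G'$-neighbors (the extra edges contribute $0$), which vanishes by conservation for $\theta'$; if $v \notin V(G')$, then \emph{every} incident edge carries zero flow, so conservation holds trivially.
\end{itemize}

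Next, I would compare the energies. Since the resistances of edges in $E(G')$ are the same in $G$ and in $G'$, and edges in $E(G) \setminus E(G')$ contribute $0$ to $E(\theta)$, one directly obtains
\begin{equation}
E(\theta) \;=\; \sum_{e \in E(G)} \theta(e)^2\, \mathfrak{r}(e) \;=\; \sum_{e \in E(G')} \theta'(e)^2\, \mathfrak{r}(e) \;=\; E(\theta').
\end{equation}
By Definition~\ref{def:effectiveresistance}, this yields $\mathcal{R}(x \longleftrightarrow y, G) \leq E(\theta) = E(\theta')$. Taking the infimum over all unit flows $\theta'$ from $x$ to $y$ in $G'$ gives the desired inequality.

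There is no substantive obstacle here; the only point requiring a line of care is flow conservation at vertices of $G'$ that acquire new neighbors in $G$, which is handled as above by observing that the new incident edges carry zero flow under the extension.
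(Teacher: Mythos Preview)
Your argument is correct and is precisely the standard proof via the variational characterization of effective resistance (Definition~\ref{def:effectiveresistance}). Note, however, that the paper does not supply its own proof of this statement: Theorem~\ref{thm:rayleigh} is quoted from~\cite{levinperes} without proof, so there is nothing in the paper to compare against beyond the fact that your approach is exactly the one the cited reference uses.
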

When the underlying network $G$ is clear, we simply denote the effective resistance by $\mathcal{R}(x\longleftrightarrow y)$. It is well known that the triangle inequality holds for the effective resistance:
\begin{lemma} [Corollary 10.8 of~\cite{levinperes}] \label{lem:triangleinequality}
    Let $G=(V,E)$ and let $x,y,z \in V$. Then
    \begin{equation}
        \mathcal{R} (x\longleftrightarrow z) \leq \mathcal{R} (x\longleftrightarrow y) + \mathcal{R} (y \longleftrightarrow z).
    \end{equation}
\end{lemma}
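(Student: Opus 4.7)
The plan is to derive the bound from the commute time identity rather than by a direct flow construction. A naive attempt, taking near-optimal unit flows $\theta_1$ from $x$ to $y$ and $\theta_2$ from $y$ to $z$ and summing, produces a valid unit flow from $x$ to $z$, but Cauchy--Schwarz on the cross term $\sum_e \theta_1(e)\theta_2(e)\mathfrak{r}(e)$ only yields the weaker bound $\sqrt{\mathcal{R}(x \leftrightarrow z)} \le \sqrt{\mathcal{R}(x \leftrightarrow y)} + \sqrt{\mathcal{R}(y \leftrightarrow z)}$; squaring introduces a positive cross term that is not absorbed by $\mathcal{R}(x \leftrightarrow y) + \mathcal{R}(y \leftrightarrow z)$. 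The probabilistic route circumvents this issue cleanly.

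Specifically, I would invoke the commute time identity for weighted networks (see Chapter~10 of~\cite{levinperes}): for any pair of vertices $a, b \in V$,
$$
\mathbb{E}_a[\tau_b] + \mathbb{E}_b[\tau_a] \;=\; c_G \cdot \mathcal{R}(a \leftrightarrow b),
$$
where $\tau_v$ denotes the first hitting time of $v$ under the Markov chain defined by~\eqref{eq:transition_probabilities} and $c_G$ is a constant depending only on the network (namely the total conductance, up to a factor of two), and in particular not on the chosen pair $(a,b)$. The strong Markov property applied at $\tau_y$ then gives the pointwise bound $\tau_z \le \tau_y + \tau_z'$, where $\tau_z'$ is the hitting time of $z$ after $\tau_y$, which conditional on $\tau_y<\infty$ is distributed like $\tau_z$ started from $y$. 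Taking expectations and interchanging the roles of $x$ and $z$ yields
$$
\mathbb{E}_x[\tau_z] + \mathbb{E}_z[\tau_x] \;\le\; \big(\mathbb{E}_x[\tau_y] + \mathbb{E}_y[\tau_x]\big) + \big(\mathbb{E}_y[\tau_z] + \mathbb{E}_z[\tau_y]\big).
$$
Dividing through by $c_G$ and invoking the commute time identity on each of the three pairs produces exactly $\mathcal{R}(x \leftrightarrow z) \le \mathcal{R}(x \leftrightarrow y) + \mathcal{R}(y \leftrightarrow z)$.

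The substantive ingredient is the commute time identity, which is a standard but nontrivial fact; once this is in hand, the subadditivity of mean hitting times is an immediate consequence of the strong Markov property, and the proof is essentially a two-line calculation. The only minor point to verify is that the notion of resistance introduced in Definition~\ref{def:effectiveresistance} coincides with that of~\cite{levinperes}, so that the same constant $c_G$ appears in all three applications of the commute time identity and therefore cancels after dividing through.
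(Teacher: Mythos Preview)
The paper does not give its own proof of this lemma; it simply cites Corollary~10.8 of~\cite{levinperes}. Your argument via the commute time identity and subadditivity of hitting times under the strong Markov property is correct, and it is precisely the proof given in~\cite{levinperes} for that corollary, so your proposal matches the cited source.
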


For $x,y \in V$, we say that $\Pi \subseteq E$ is an \emph{edge-cutset} separating $x$ from $y$ if every path from $x$ to $y$ contains at least one edge in $\Pi$. In other words, if we remove $\Pi$ from $E$ then $x$ is isolated from $y$. We will use the following Nash-Williams inequality to obtain lower bounds for effective resistances.
\begin{lemma} [Proposition 9.16 of~\cite{levinperes}]\label{lem:Nash-Williamas}
    Let $\{\Pi_k\}_k$ be a pairwise edge-disjoint family of edge-cutsets that separate $x$ from $y$. Then
    \begin{equation}
        \mathcal{R}(x\longleftrightarrow y) \geq \sum_{k}\left(\sum_{e\in \Pi_k} \mathfrak{c}(e) \right)^{-1}.
    \end{equation}
\end{lemma}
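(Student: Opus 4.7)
The plan is to produce a lower bound on the energy of an arbitrary unit flow $\theta$ from $x$ to $y$, depending only on the cutset data $\{\Pi_k\}_k$, and then pass to the infimum via Definition~\ref{def:effectiveresistance}. Fix one cutset $\Pi_k$, let $A_k$ be the connected component of $x$ in $G\setminus\Pi_k$, and orient every edge $e\in\Pi_k$ whose endpoints lie in different components of $G\setminus\Pi_k$ from its $A_k$-endpoint to its other endpoint; denote this oriented edge by $\overrightarrow{e}$. Since discarding non-crossing edges from $\Pi_k$ only decreases $\sum_{e\in\Pi_k}\mathfrak{c}(e)$ and hence strengthens the target bound, we may assume without loss of generality that every edge of $\Pi_k$ is crossing.

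The crucial identity is that the net flow across $\Pi_k$ equals $1$. This follows by summing the interior balance equations $\sum_{b\sim a}\theta(\overrightarrow{ab})=0$ over all $a\in A_k\setminus\{x\}$ together with the source equation $\sum_{b\sim x}\theta(\overrightarrow{xb})=1$: by antisymmetry of $\theta$, every edge with both endpoints in $A_k$ contributes two cancelling terms and drops out, leaving
\[
\sum_{e\in\Pi_k}\theta(\overrightarrow{e})\;=\;1.
\]
This is the only place where the cutset property of $\Pi_k$ enters; the remainder of the argument is purely algebraic.

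Now applying Cauchy--Schwarz with the factorization $\theta(\overrightarrow{e})=\bigl(\theta(\overrightarrow{e})\sqrt{\mathfrak{r}(e)}\bigr)\cdot\bigl(1/\sqrt{\mathfrak{r}(e)}\bigr)$ yields
\[
1\;=\;\Bigl(\sum_{e\in\Pi_k}\theta(\overrightarrow{e})\Bigr)^{2}\;\leq\;\Bigl(\sum_{e\in\Pi_k}\theta(e)^{2}\mathfrak{r}(e)\Bigr)\Bigl(\sum_{e\in\Pi_k}\mathfrak{c}(e)\Bigr),
\]
using $\theta(\overrightarrow{e})^{2}=\theta(e)^{2}$. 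Rearranging gives $\sum_{e\in\Pi_k}\theta(e)^{2}\mathfrak{r}(e)\geq\bigl(\sum_{e\in\Pi_k}\mathfrak{c}(e)\bigr)^{-1}$. Summing over $k$ and invoking the pairwise edge-disjointness of the $\Pi_k$, we bound the sum by the total energy:
\[
E(\theta)\;=\;\sum_{e\in E}\theta(e)^{2}\mathfrak{r}(e)\;\geq\;\sum_{k}\sum_{e\in\Pi_k}\theta(e)^{2}\mathfrak{r}(e)\;\geq\;\sum_{k}\Bigl(\sum_{e\in\Pi_k}\mathfrak{c}(e)\Bigr)^{-1}.
\]
Taking the infimum over unit flows $\theta$ concludes the proof. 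The main (and only genuinely subtle) step is establishing the flow-conservation identity across $\Pi_k$ with the correct orientation convention; once it is in hand, everything else reduces to Cauchy--Schwarz plus the additivity granted by edge-disjointness.
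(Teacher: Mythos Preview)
Your proof is correct and is the standard argument for the Nash--Williams inequality. The paper does not give its own proof of this lemma; it simply cites Proposition~9.15 of Levin--Peres, so there is nothing to compare against beyond noting that your write-up matches the usual textbook proof.
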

The simple random walk in $G$ admits a natural network representation when the set of resistances is given by $\mathfrak{r}(e)=1$ for all $e\in E$. The following bounds on $\tau_{cov}$ in terms of the effective resistance will turn out to be useful for us:
\begin{theorem}[Theorem 2.4 of \cite{chandra1989electrical}] \label{thm:upperboundcov_chandra}
    Let $G=(V,E)$ be a connected graph. Then there exists a universal constant $\gamma_1>0$ such that   
    \begin{equation}
    \tau_{cov} \leq \gamma_1|E|  \log(|V|) \left(\max_{x,y \in V} \mathcal{R}(x\longleftrightarrow y) \right).
    \end{equation}
\end{theorem}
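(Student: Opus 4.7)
The plan is to combine two classical tools: the commute-time identity (which is in fact the central result of \cite{chandra1989electrical}) and Matthews' cover-time bound. Denote by $T_y$ the first hitting time of $y$ by the simple random walk on $G$ and set $R_{\max} := \max_{x,y \in V}\mathcal{R}(x\longleftrightarrow y)$.

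\textbf{Step 1: Commute-time identity.} I would first show that for the simple random walk on $G$ (unit resistances), for every $x,y\in V$,
\begin{equation}
\mathbb{E}_x[T_y] + \mathbb{E}_y[T_x] = 2|E|\,\mathcal{R}(x\longleftrightarrow y).
\end{equation}
The approach is via potential theory: inject one unit of current at $x$ and extract one at $y$, producing a voltage function $v$ normalized so that $v(y)=0$, so that $v(x)=\mathcal{R}(x\longleftrightarrow y)$ and $v$ is harmonic on $V\setminus\{x,y\}$. An optional-stopping argument applied to $v(X_t)$ identifies $v(z)/v(x)$ with $\mathbb{P}_z(T_x<T_y)$. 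Superposing this current configuration with the one obtained by swapping the roles of $x$ and $y$ gives a function whose discrete Laplacian is supported on $\{x,y\}$; expressing the resulting voltage drop through the Green's function of the walk killed at $\{x,y\}$ and summing against the stationary distribution $\pi(z)=\deg(z)/(2|E|)$ produces the prefactor $2|E|$. As an immediate consequence, $\mathbb{E}_x[T_y]\le 2|E|\,R_{\max}$ for every pair $x,y$.

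\textbf{Step 2: Matthews' bound.} Next, I would invoke Matthews' theorem: for any connected graph,
\begin{equation}
\tau_{cov}\le (1+\log|V|)\max_{x,y\in V}\mathbb{E}_x[T_y].
\end{equation}
The proof considers a uniformly random ordering of $V$ and observes that the cover time starting from any vertex equals the expected hitting time of the vertex that is last visited in this ordering. Bounding each conditional increment by $\max_{x,y}\mathbb{E}_x[T_y]$ and using linearity of expectation, the contribution of the $k$-th newly visited vertex is weighted by $1/k$, and summing gives the harmonic number $\sum_{k=1}^{|V|} 1/k \le 1+\log|V|$.

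Combining Steps 1 and 2 yields $\tau_{cov}\le 2(1+\log|V|)\,|E|\,R_{\max}$, which establishes the theorem with any $\gamma_1$ exceeding $2$ (finitely many small graphs can be absorbed into the constant). The main obstacle is Step 1: although the voltage--hitting-probability correspondence is a standard application of the optional stopping theorem, extracting the exact $2|E|$ factor requires careful bookkeeping with reversibility and the stationary weights, typically done via the superposition/Green's function route described above. Matthews' inequality in Step 2 is, by contrast, a brief combinatorial argument once the random-ordering trick has been identified.
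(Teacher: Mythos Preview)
The paper does not give its own proof of this statement: it is quoted verbatim as Theorem~2.4 of \cite{chandra1989electrical} in the preliminaries section and used as a black box. So there is nothing in the paper to compare your argument against.

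That said, your sketch is the standard route and is essentially how the result is obtained in the cited reference: the commute-time identity $\mathbb{E}_x[T_y]+\mathbb{E}_y[T_x]=2|E|\,\mathcal{R}(x\longleftrightarrow y)$ is precisely the main contribution of \cite{chandra1989electrical}, and combining it with Matthews' bound $\tau_{cov}\le (1+\log|V|)\max_{x,y}\mathbb{E}_x[T_y]$ immediately yields the inequality with $\gamma_1$ any constant larger than $2$. Your outline of Step~1 via harmonic voltages and superposition is correct in spirit, and Step~2 is exactly Matthews' argument. There is no gap.
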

\begin{theorem}[Theorem 1.3 of \cite{kahn2000cover}]\label{thm:lowerboundcov}
    Let $G=(V,E)$ be a connected graph. Given $V'\subseteq V$, there exists a universal constant $\gamma_2 > 0$ such that
    \begin{equation}
        \tau_{cov}\geq \gamma_2|E|  \log(|V'|) \left(\min_{\substack{x \neq y \\ x, y \in V'}} \mathcal{R}(x \longleftrightarrow y)\right).
    \end{equation}
\end{theorem}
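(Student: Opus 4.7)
The plan is to combine Matthews' cover-time inequality with the commute-time identity of Chandra et al.\ already recorded in Theorem~\ref{thm:upperboundcov_chandra}.

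First, I would invoke Matthews' bound: for any $V' \subseteq V$ with $|V'| \geq 2$ and any starting vertex of the walk,
\[
\tau_{cov}(G) \;\geq\; H_{|V'|-1}\cdot \min_{\substack{x \neq y \\ x, y \in V'}} \mathbb{E}_x[\tau_y],
\]
where $\tau_y$ denotes the hitting time of $y$ and $H_k = 1+1/2+\cdots+1/k = (1+o(1))\log k$ is the $k$-th harmonic number. Matthews' inequality is proven by drawing a uniformly random permutation $\pi$ of $V'$ and observing that, conditional on the trajectory of the walk, the probability that $\pi(k)$ is the last among $\pi(1),\dots,\pi(k)$ to be visited equals $1/k$; on this event the additional time required, over the time at which the first $k-1$ of these points have all been visited, is at least $\min_{x\neq y}\mathbb{E}_x[\tau_y]$ by the strong Markov property, and telescoping over $k$ produces the harmonic factor.

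Second, I would use the commute-time identity
\[
\mathbb{E}_x[\tau_y] + \mathbb{E}_y[\tau_x] \;=\; 2|E|\,\mathcal{R}(x\longleftrightarrow y),
\]
which implies in particular $\max(\mathbb{E}_x[\tau_y], \mathbb{E}_y[\tau_x]) \geq |E|\,\mathcal{R}(x\longleftrightarrow y)$ for every pair $x,y \in V$.

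The main obstacle is reconciling these two ingredients: Matthews' bound involves the \emph{minimum} over ordered pairs of directed hitting times in $V'$, while the commute-time identity only lower-bounds the \emph{maximum} of the two directions for each unordered pair. To bridge this gap, I would orient every unordered pair $\{x,y\}\subseteq V'$ toward the endpoint with the larger hitting time, obtaining a tournament on $V'$; every tournament contains a Hamiltonian path, along which each consecutive directed hitting time is at least $|E|\cdot\mathcal{R}(x\longleftrightarrow y)\geq |E|\cdot\min_{x\neq y\in V'}\mathcal{R}(x\longleftrightarrow y)$. Revisiting Matthews' probabilistic argument while restricted to directed hitting times compatible with this tournament orientation (as in the original Kahn--Kim--Lov\'asz--Vu paper, or by passing to a well-chosen subset $V''\subseteq V'$ on which the two directional hitting times are comparable at the cost of only an absolute constant) preserves the $\log|V'|$ factor and yields the stated lower bound with some universal $\gamma_2 > 0$. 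All other steps are routine combinations of the classical inequalities above.
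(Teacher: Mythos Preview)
The paper does not prove this statement; it is quoted verbatim as Theorem~1.3 of Kahn--Kim--Lov\'asz--Vu and used as a black box. There is therefore no ``paper's own proof'' to compare against.

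Your sketch is the standard route and is essentially the argument of the original KKLV paper: Matthews' lower bound supplies the $\log|V'|$ factor, the commute-time identity supplies the $|E|\,\mathcal{R}$ factor, and the tournament trick reconciles the mismatch between the minimum directed hitting time (needed for Matthews) and the maximum directed hitting time (controlled by commute time). You correctly isolate this mismatch as the only nontrivial point. The one place where your write-up is still a sketch rather than a proof is the tournament step: the Hamiltonian path in the tournament gives large hitting times only along consecutive pairs, which is not by itself what Matthews' inequality consumes. The actual KKLV argument extracts a vertex of large in-degree (or iterates to a subset $V''$ of size $\Theta(|V'|)$ on which the ``bad'' direction is uniformly bounded below), and then reruns Matthews on $V''$; since $\log|V''|=\Theta(\log|V'|)$, only an absolute constant is lost. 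You allude to this (``passing to a well-chosen subset $V''\subseteq V'$''), so the plan is sound, but in a full write-up that extraction needs to be made explicit rather than deferred to the citation.
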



\subsection{Random geometric graphs}
We next state and prove useful facts about random geometric graphs. 
The first lemma is an upper bound on the number of edges in each parallelepiped.

\begin{lemma}\label{lem:edgedeviation}
    Let $d\ge 2$, and let $\cG_n$ 
    with $r=\Theta(1)$. A.a.s., every axis-parallelepiped with integer coordinates (inside $\Lambda_n$) of side lengths $Mr \log n$ for some constant $M > 0$ in 2 dimensions and $Mr$ in the remaining $d-2$ dimensions contains $O(\log^2 n)$ vertices and has $O(\log^2 n)$ edges with at least one endpoint in it. 
\end{lemma}
\begin{proof}
First note that by standard Chernoff bounds for Poisson random variables, together with a union bound over all $O(n)$ parallelepipeds, each such parallelepiped contains a.a.s. $O(\log^2 n)$ vertices, showing the first part of the lemma. Now, for the second part, tessellate such parallelepiped into axisparallel boxes of sidelength $2r$. Tessellate it first so that the left upper corner is aligned with the tessellation, and include also one extra row and one column possibly outside the parallelepiped. Note that there are $C \log^2 n$ such boxes for some $C > 0$. Let $i_0$ be a large constant. For $i_0 \le i \le C_1\log n$ with $C_1$ large enough, the probability that in a fixed box the number of vertices is within $[2^i, 2^{i+1})$ is at most
$$
p_i := e^{- (2^i-1) (2r)^d/3},
$$
where we used the fact that $\mathbb{P}(\mathrm{Po}(\lambda)\ge k) \le e^{-(k-1)\lambda/3}$ for $k$ large enough. Since the number of vertices in different boxes is independent, the number of boxes having cells with a number of vertices within $[2^i, 2^{i+1})$ is dominated by a binomial distribution with parameters $C\log^2 n$ and $p_i$.  We distinguish two cases: if $2^i < C_1 \log \log n$, then the probability to have at least $\log^2 n / 2^{3i}$ many boxes whose number of vertices is within $[2^i, 2^{i+1})$ is at most
$$
\binom{C \log^2 n}{\log^2 n / 2^{3i}}p_i^{\log^2 n / 2^{3i}} \le (Ce 2^{3i} p_i)^{\log^2 n / 2^{3i}} \le n^{-2}.
$$
On the other hand, if $2^i \ge C_1\log \log n$ for $C_1$ large enough, then, by Chernoff bounds, the probability to have at least $C_2 \log n/ 2^i$ (for $C_2$ large) boxes whose number of vertices is within $[2^i, 2^{i+1})$ is at most
$$
\binom{C\log^2 n}{C_2 \log n/2^i}p_i^{C_2\log n/2^i} \le (C_3 e(\log n) p_i 2^i)^{C_2 \log n/2^i} \le n^{-2}.
$$
Now, assuming the worst case that all pairs of vertices inside the same cell are incident to each other, by combining both cases, with probability at least say $1-n^{-3/2}$, 
the total number of edges inside cells is at most
$$
O(\log^2 n)+\sum_{i, 2^i < C_1 \log \log n } (\log^2 n / 2^{3i}) (2^{i+1})^2 +\sum_{i, 2^i \ge C_1 \log \log n, 2^i \le C_3 \log n} (C_2 \log n / 2^i ) (2^{i+1})^2 = O(\log^2 n),
$$
where we chose $C_3$ large enough so that with probability at most $n^{-2}$ there exists a cell with more than $C_3 \log n$ many vertices, and where the sum is only over all $i$ that are powers of $2$.
Hence, with probability at least $1-n^{-3/2}$, the number of edges contained inside some cell of the given tessellation is $O(\log^2 n)$. We may shift the tessellation in each coordinate by $r$ (taking all $2^d$ combinations of shifts or not shifts in each coordinate), and by a union bound, we see that in all tessellations the total number of edges, is still with probability $1-O(n^{-3/2})$, $O(\log^2 n)$. Observe that any edge of length $r$ must be fully contained in at least one cell of one tessellation, and hence the statement holds for a fixed parallelepiped. Taking a union bound over all $O(n)$ parallelepipeds, the lemma follows.
\end{proof}

The following result will be used several times in the sequel. Since we were unable to find a reference, we give a proof for completeness.  Recall the definition of $r_g$ in \eqref{eq:definition_RG}.

\begin{lemma}\label{lem:sizergg}
    Consider $\cG_n^{r,d}$ with $d\ge 2$ and $r \ge (1+\eps) r_g$ for arbitrarily small $\eps > 0$. A.a.s., the number of edges of $L_1$ is $\Theta(n r^d)$.
\end{lemma}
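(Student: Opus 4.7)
The upper bound is trivial: $|E(L_1(G))|\le|E(G)|$, and
\begin{equation*}
\mathbb{E}[|E(G)|]=\tfrac12\int_{\Lambda_n}|\Lambda_n\cap B(x,r)|\,dx\le \tfrac{V_d}{2}nr^d.
\end{equation*}
Concentration around the mean then yields $|E(G)|=O(nr^d)$ a.a.s.: one invokes Theorem~\ref{thm:edgedeviation} for $r=\Theta(1)$, while for $r\to\infty$ a direct Chebyshev bound suffices based on the elementary variance estimate $\Var(|E(G)|)=O(nr^{2d})$, which is $o((\mathbb{E}[|E(G)|])^2)$.

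For the lower bound, in the supercritical regime $r\ge(1+\eps)r_g$ a standard continuum-percolation result (see e.g.~\cite{Pen03}) provides a constant $\theta=\theta(\eps,d)>0$ such that $|V(L_1(G))|\ge\theta n$ a.a.s. When $r$ remains bounded by a constant, $nr^d=\Theta(n)$ and the connectedness of $L_1(G)$ already gives $|E(L_1(G))|\ge|V(L_1(G))|-1=\Omega(n)=\Omega(nr^d)$, settling this case.

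The remaining regime $r\to\infty$ requires a degree-truncation argument, since the connectivity bound becomes too weak. Set $T:=\{v\in V(G):\deg_G(v)\ge cr^d\}$ for a small constant $c=c(d)>0$. By the Mecke formula for Poisson point processes,
\begin{equation*}
\mathbb{E}[|V(G)\setminus T|]=\int_{\Lambda_n}\mathbb{P}[\mathrm{Poisson}(|\Lambda_n\cap B(x,r)|)<cr^d]\,dx,
\end{equation*}
and since $|\Lambda_n\cap B(x,r)|\ge V_dr^d/2^d$ uniformly in $x\in\Lambda_n$, the Chernoff lower-tail bound for the Poisson distribution yields, for $c$ small enough, $\mathbb{E}[|V(G)\setminus T|]\le ne^{-c'r^d}=o(n)$. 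Markov's inequality then gives $|V(G)\setminus T|\le\theta n/2$ a.a.s., so $|V(L_1(G))\cap T|\ge\theta n/2$ a.a.s. Since every neighbor of a vertex of $L_1(G)$ also lies in $L_1(G)$,
\begin{equation*}
2|E(L_1(G))|=\sum_{v\in V(L_1(G))}\deg_G(v)\ge |V(L_1(G))\cap T|\cdot cr^d=\Omega(nr^d),
\end{equation*}
which completes the argument. The main obstacle is precisely this regime $r\to\infty$: the trivial connectivity bound no longer captures the right order, and one must combine the positive density of $L_1(G)$ with a uniform Poisson large-deviation estimate for the vertex degrees in order to recover the missing $r^d$ factor.
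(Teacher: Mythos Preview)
Your proof is correct. For the lower bound you take a genuinely different route from the paper. The paper uses a sprinkling argument: it splits the Poisson point process into two independent processes of intensities $1-\xi$ and $\xi$, observes that the thinned process (with $\xi$ small) is still supercritical and hence has a giant of size $\Theta(n)$, and then counts edges from this giant to the second process; the expected number of such edges is $\Theta(nr^d)$ and a second-moment computation gives concentration. The point of sprinkling is that, conditionally on the first process, the edges to the second process are governed by a fresh Poisson process, which makes the variance computation clean and handles all $r$ uniformly.

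Your argument instead works directly with the full process: you combine the linear size of $L_1(G)$ with a Poisson lower-tail bound showing that all but $o(n)$ vertices have degree $\ge cr^d$, and then sum degrees inside the giant (using that $\deg_G=\deg_{L_1}$ on $L_1$). This avoids sprinkling entirely and is arguably more elementary, at the cost of the case split: for bounded $r$ the degree-truncation step does not yield $|V(G)\setminus T|=o(n)$, so you fall back on the spanning-tree bound $|E(L_1)|\ge |V(L_1)|-1$. Both approaches are short; the paper's buys uniformity in $r$, yours buys directness.
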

\begin{proof}
The upper bound on the number of edges follows trivially, as a.a.s.\ the total number of edges in $\cG_n$ is $\Theta(n r^d)$: 
indeed, consider a tessellation of $\Lambda_n$ into boxes of side length $2r$ together with all possible combinations of respective shifts of this tessellation by $r$ in each dimension (so there are $2^d$ such tessellations). It is clear that the number of boxes is $\Theta(n/r^d)$ for every shift (boundary issues do not matter here). For one of such shifts, the number of edges with both endvertices inside a box is bounded from above by $X^2$, where $X \sim \mathrm{Poi}(r^{2d})$. Since different boxes are independent, by standard concentration results, a.a.s. the total number of edges in one tessellation is $O(nr^{d})$. The desired result follows by a union bound over all shifts, and the fact that any edge has to be completely contained in one box of one tessellation.

For the lower bound, first sprinkle a Poisson point process of intensity $1-\xi$ where $\xi > 0$ is small enough so that the resulting graph has a giant component of size $\Theta(n)$ a.a.s. (see for example Theorem 10.9 of~\cite{Pen03}). Now, each  vertex of this giant has in expectation $\Theta(\xi r^d)$ neighbors among vertices of the second PPP of intensity $\xi$, and so the total number of edges emanating from the giant component of the first PPP to the second PPP is in expectation $\Theta(\xi n r^d)$. Since we may assume $r \le r_c$, and the giant component a.a.s.\ contains in each ball of radius $r$ at most $O(\log n)$ many vertices, the number of edges emanating from one fixed vertex of the first PPP is also at most $O(\log n)$. Since the total number of edges added is $\Omega(n)$, by a standard second moment method, concentration follows.  
\end{proof}
We will make use of the following proposition that relates the Euclidean distance $d_E$ of two vertices of $L_1$ to the graph distance $d_{G}$.
 By Theorem 3 and Remark 5 of \cite{friedrich2013diameter}, we have:
\begin{theorem}\label{thm:chemical_distance_friedrich}
    Consider $\cG_n^{r,d}$ with $d\geq 2$ and $r_g < r$. A.a.s.\ the following is true: there exist positive constants $C=C(r,d)$ and $C'=C'(r,d)$, such that for any two vertices $x,y \in L_1$ with $d_E(x,y)\geq C \frac{\log n}{r^{d-1}}$, we have $d_{G}(x,y)\leq C'd_E(x,y)/r$.
\end{theorem}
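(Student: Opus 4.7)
The plan is to reduce the statement to the classical linearity of chemical distance in supercritical Bernoulli site percolation on $\mathbb{Z}^d$ (Antal--Pisztora), via a block renormalization of the RGG, and then to translate the bound back to $G$. The intuition is that once $r > r_g$, the giant component, viewed on scale $r$, behaves like a supercritical percolation cluster on a lattice, and the Antal--Pisztora inequality can be applied after a suitable coarse-graining.

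First, I would tessellate $\Lambda_n$ into cubes of side $\ell r$, where $\ell = \ell(d) > 0$ is chosen so that any two points in $L^\infty$-adjacent cubes lie at Euclidean distance at most $r$, hence are joined by an edge in $G$. Group these cubes into blocks of side $K \ell r$ for a large integer $K$ to be tuned, and declare a block \emph{good} if, within the concentric window of side $3 K \ell r$, the restriction of $G$ admits a unique connected component that crosses the window in every coordinate direction and that absorbs every other component contained in the window. Because the windows of non-adjacent blocks are disjoint, the goodness indicators form a finite-range dependent field, so the Liggett--Schonmann--Stacey theorem lets us stochastically dominate them from below by an independent Bernoulli site percolation whose parameter is as close to $1$ as desired, provided $K$ is large enough. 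The input for this last step is a quantitative version of supercriticality in continuum percolation: block-crossing probabilities above $r_g$ tend to $1$ as the block side grows, which is available from \cite{Pen03}.

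Next, two $L^\infty$-adjacent good blocks have intersecting crossing clusters in the overlap of their enlarged windows, so the good-block graph is connected and lies inside $L_1(G)$. Applying Antal--Pisztora to the dominated Bernoulli percolation, the chemical distance in the good-block graph between the blocks containing $x$ and $y$ is at most $c_1\, d_E(x,y)/(K\ell r)$, except on an event of probability at most $\exp(-c_2\, d_E(x,y)/r)$. Realising a block path by concatenating shortest paths inside each crossing cluster (at cost $O(K^d)$ vertices per block) yields an $(x,y)$-path in $G$ of length $O(d_E(x,y)/r)$, giving the desired bound $d_G(x,y) \leq C'\, d_E(x,y)/r$.

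Finally, a union bound over PPP pairs closes the argument: grouping points by the cube containing them (each carrying $O(r^d)$ points a.a.s.\ when $r = O((\log n)^{1/d})$) and summing $\exp(-c_2\, d_E(x,y)/r)$ over pairs of cubes at Euclidean distance at least $C \log n / r^{d-1}$ bounds the failure probability by $n^2 r^{2d} \exp(-c_2 C \log n / r^d) = o(1)$ for $C$ large. The main obstacle is the stochastic-domination step near criticality: choosing $K$ large enough to beat the threshold of Bernoulli site percolation requires a genuine continuum-percolation estimate on block crossings just above $r_g$, rather than any naive independent-cell argument. Once this is in place, the rest is a routine translation between continuum and lattice renormalizations.
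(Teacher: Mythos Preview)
The paper does not prove this theorem; it is quoted from \cite{friedrich2013diameter} (Theorem~3 and Remark~5 there) and used as a black box. So there is no ``paper's own proof'' to compare against.

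Your proposed route---block renormalization of the RGG, stochastic domination of the good-block field by highly supercritical Bernoulli site percolation via Liggett--Schonmann--Stacey, and then invoking Antal--Pisztora for the linearity of chemical distance---is indeed the standard mechanism behind results of this type and is essentially how such statements are proved in the literature. Two points deserve more care, though. First, you do not say how to connect $x$ and $y$ to the nearest good block when they happen to sit in a bad one; the usual fix is to use exponential decay of the radius of the bad-block cluster containing a given site, which contributes a term of the same order as the Antal--Pisztora error. Second, your union-bound calculation $n^{2}r^{2d}\exp(-c_{2}C\log n / r^{d})=o(1)$ only works as written when $r$ is bounded, since for $r$ near $r_{c}$ the exponent degenerates to a constant. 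The repair is to track that the probability a block is bad is at most $e^{-\gamma K r^{d}}$, so after domination the Bernoulli parameter is $1-e^{-\gamma' K r^{d}}$; the Antal--Pisztora decay rate then scales like $r^{d}$, and the failure probability becomes $\exp(-c\, r^{d-1}\, d_{E}(x,y))$, which for $d_{E}(x,y)\geq C\log n / r^{d-1}$ gives the $n^{-cC}$ needed for a uniform union bound. With these two adjustments your sketch goes through.
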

Recall the definition of $r_c$ in \eqref{eq:defintion_RC}. We get the following corollary: 
\begin{corollary}\label{cor:chemical_distance_rgg}
  Consider $\cG_n^{r,d}$ with $d\geq 2$ and $r_g < r =O(r_c)$. Let $V'\subseteq L_1$ be such that for any vertex $x\in L_1$ we have that  $d_E(x,V')< C\log n/r^{d-1}$ for the same constant $C$ as in Theorem~\ref{thm:chemical_distance_friedrich}. Then, a.a.s., there exists a constant $c'>0$ such that $d_{G}(x,V')\leq c'\log n/r^d$ for any vertex $x\in L_1$.
\end{corollary}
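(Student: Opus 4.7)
The plan is to reduce to Theorem~\ref{thm:chemical_distance_friedrich} via an auxiliary intermediate vertex. Fix $x\in L_1(G)$ and let $y\in V'$ be a vertex given by the hypothesis, so that $d_E(x,y)<C\log n/r^{d-1}$. The obstruction is that this inequality is strict in the \emph{wrong} direction: $d_E(x,y)$ sits just \emph{below} the threshold at which Theorem~\ref{thm:chemical_distance_friedrich} becomes available, so I cannot invoke that result on the pair $(x,y)$ directly. The idea is instead to locate a third vertex $z\in L_1(G)$ at Euclidean distance $\Theta(\log n/r^{d-1})$ from $x$, but comfortably above the Friedrich threshold; then $d_E(x,z)\ge C\log n/r^{d-1}$ by construction, and the triangle inequality yields $d_E(y,z)\ge d_E(x,z)-d_E(x,y)\ge C\log n/r^{d-1}$. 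Two applications of Theorem~\ref{thm:chemical_distance_friedrich} then give $d_G(x,z),\,d_G(y,z)=O(\log n/r^d)$, and the triangle inequality for graph distance produces $d_G(x,V')\le d_G(x,y)\le c'\log n/r^d$, as desired.

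To locate $z$, I would argue as follows. Since the side of $\Lambda_n$ is $n^{1/d}$, which (using $r\le r_c$) is much larger than $\log n/r^{d-1}$, for every $x\in L_1(G)$ one may choose a coordinate direction $e_i$ so that the ball $B\bigl(x+(2.5\,C\log n/r^{d-1})e_i,\;0.5\,C\log n/r^{d-1}\bigr)$ is contained in $\Lambda_n$. Any vertex of $L_1(G)$ in this ball is a valid choice of $z$, because then $d_E(x,z)\in[2C\log n/r^{d-1},3C\log n/r^{d-1}]$, which puts both $d_E(x,z)$ and $d_E(y,z)$ above the threshold of Theorem~\ref{thm:chemical_distance_friedrich} while still bounding them by $O(\log n/r^{d-1})$.

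The main obstacle is therefore a density statement: a.a.s., uniformly over all such feasible balls (and hence over all $x\in L_1(G)$), the chosen ball contains at least one vertex of $L_1(G)$. This is a standard property of the supercritical random geometric graph. The volume of the ball is $\Omega((\log n)^d/r^{d(d-1)})=\Omega(\log n)$ (again using $r\le r_c$), so by Chernoff and a union bound over a suitable discretization of $\Lambda_n$ into $O(n)$ candidate positions, a.a.s.\ every such ball contains $\Omega(\log n)$ points of the PPP. Since components of $G$ other than $L_1(G)$ have size $o(\log n)$ in the regime $r>r_g$ (via a standard sprinkling/renormalization argument, as for instance in Penrose's monograph and as is already implicit in the renormalization scheme of Section~\ref{sec:renormalization}), at least one of the points in the ball must lie in $L_1(G)$. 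Once this density claim is in place, combining it with the two applications of Theorem~\ref{thm:chemical_distance_friedrich} and the triangle inequality yields the corollary with, for instance, $c'=7CC'$.
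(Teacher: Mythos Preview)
Your overall strategy---locate an auxiliary $z\in L_1(G)$ at Euclidean distance $\Theta(\log n/r^{d-1})$ from $x$, apply Theorem~\ref{thm:chemical_distance_friedrich} to both pairs $(x,z)$ and $(y,z)$, and finish with the triangle inequality for $d_G$---is exactly the paper's approach, and your triangle-inequality conclusion is carried out cleanly.

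The gap is in how you produce $z$. Your density argument does not work: first, the claim that non-giant components have size $o(\log n)$ is not available (Theorem~\ref{thm:giantcomponents} only gives $O(\log^2 n)$); second, and more fundamentally, even if every small component had $o(\log n)$ vertices, a ball containing $\Omega(\log n)$ PPP points need not meet $L_1(G)$, since it could be covered by many distinct small components. The paper sidesteps this entirely with a one-line observation: since $x\in L_1(G)$ and $L_1(G)$ has Euclidean diameter $\Omega(n^{1/d})$, there is some $v\in L_1(G)$ with $d_E(x,v)\ge n^{\eps}$; along any path $x=v_0,v_1,\dots,v_k=v$ in $L_1(G)$ consecutive vertices differ by at most $r$ in Euclidean distance, so some $v_{i+1}$ on the path satisfies $d_E(x,v_{i+1})\in[2C\log n/r^{d-1},\,2C\log n/r^{d-1}+r]$, and this $v_{i+1}$ is in $L_1(G)$ by construction. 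With $z=v_{i+1}$ your triangle-inequality finish then goes through verbatim, since $r=O(r_c)$ keeps $d_E(x,z)=O(\log n/r^{d-1})$.
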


\begin{proof}
By hypothesis, there exists $y \in V'$ with $d_E(x,y) < C \log n/r^{d-1}$. If $d_G(x,y) \le c'\log n/r^d$, we are done, so suppose not. Since $x \in L_1$, a.a.s., there exists $v \in L_1$ with $d_E(x,v) \ge n^{\eps}$ for some $\eps > 0$ (since $|L_1|=\Theta(n)$ and inside each region of area $n^\eps r^d$ there are a.a.s.\ at most $O(n^{\eps}r^d)$ many vertices, such $v$ must exist a.a.s.). We claim that therefore exists $z \in L_1$ such that $d_E(x,z) \ge C \log n /r^{d-1}$ and $d_E(x,z) =O(\log n / r^{d-1})$: indeed, consider a shortest path $x=v_0, v_1, \ldots, v_k=v$ for some $k \in \mathbb{N}$: since the Euclidean distance between two consecutive vertices is at most $r$, there exist vertices $v_i$ and $v_{i+1}$ on this path such that $d_E(x,v_i) < 3C\log n/r^{d-1}$ and $d_E(x, v_{i+1}) \ge 3C\log n/r^{d-1}$. Also, at the same time $d_E(x,v_{i+1}) \le 3C\log n/r^{d-1}+r =O(\log n/r^{d-1})$, where the equality holds from our assumption $r=O(r_c)$. Hence, $v_{i+1}$ is the desired vertex $z \in L_1$.
Therefore, by Theorem~\ref{thm:chemical_distance_friedrich}, $d_G(x,v_{i+1}) \le C'd_E(x,v_{i+1})/r \le C''\log n/r^d$ for some $C',C'' > 0$. If $v_{i+1}\in V'$, we are done. Otherwise, note first that by the triangle inequality, $d_E(v_{i+1},y)=O(\log n/r^{d-1})$. Next, we must also have $d_E(v_{i+1},y)\ge C\log n/r^{d-1}$: indeed, if not, we would conclude $d_E(x,y) \ge  d_E(x,v_{i+1}) - d_E(v_{i+1},y) \ge 3C\log n/r^{d-1} - C\log n/r^{d-1}=2C\log n/r^{d-1}$, contradicting the assumption. But 
$d_E(v_{i+1},y)\ge C\log n/r^{d-1}$ implies that by Theorem~\ref{thm:chemical_distance_friedrich}, a.a.s.\ $d_G(v_{i+1},y) \le c'\log n/r^d$ for some $c' > 0$. Once again, by the triangle inequality, $d_G(x,y) \le d_G(x,v_{i+1})+d_G(v_{i+1},y) \le 2c'\log n$, and we are done.
\end{proof}

Next, we also use the following result (see for example~\cite{Pen03}, Theorem 10.18):
\begin{theorem}\label{thm:giantcomponents}
    Let $d\geq 2$, let $r\ge (1+\eps) r_g$ for some $\eps >0$, and consider $\cG_n^{r,d}$. A.a.s., there exists $c > 0$ such that the second largest connected component of $\cG_n$ is of size at most $c \log^2 n$.
\end{theorem}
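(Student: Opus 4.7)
The plan is to prove this via a standard renormalization/Peierls-type argument, which is exactly the kind of scheme that will be set up in Section~\ref{sec:renormalization} of the paper. Note that we may assume $r \le r_c$, since for $r > r_c$ the graph is a.a.s.\ connected and the statement is vacuous; in particular $r = O((\log n)^{1/d})$. First I would partition $\Lambda_n$ into axis-parallel boxes of side length $\alpha r$ for a small enough constant $\alpha > 0$, and declare a box $B$ \emph{good} if its contents together with those of its neighboring boxes satisfy a local connectivity property whose precise formulation is tailored so that (i) the good/bad status of $B$ depends only on the points of $B$ and a bounded number of surrounding boxes, and (ii) any two $\mathbb{Z}^d$-adjacent good boxes are joined inside $G$. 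A typical choice is to require that $B$ contains a vertex linked in $G$ to the crossing cluster of each of its $2d$ face-neighbors.

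Since $r > r_g$, by choosing $\alpha$ small enough one can arrange that $\mathbb{P}(B \text{ is bad})$ is as small as desired; in particular, strictly below the site-percolation threshold of $\mathbb{Z}^d$, and moreover small enough for a $k$-dependent domination argument (the bad-box indicators are only finite-range dependent because of the local nature of goodness). A standard Peierls/exponential-tail estimate for subcritical finite-range site percolation then gives $\mathbb{P}(|\mathcal{C}(B)| \ge k) \le e^{-\beta k}$ for some $\beta > 0$, where $\mathcal{C}(B)$ is the connected cluster of bad boxes containing $B$. Taking $k = C \log n/\beta$ with $C$ large, and union-bounding over the $O(n/r^d) = O(n)$ starting boxes, shows that a.a.s.\ every cluster of bad boxes has cardinality at most $C' \log n$ for some constant $C' > 0$.

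Next I would control the number of points per box: by a Chernoff bound for Poisson random variables with mean $\alpha^d r^d = O(\log n)$, together with a union bound over the $O(n)$ boxes, a.a.s.\ every box contains at most $C'' \log n$ vertices. Combining these two bounds, any connected cluster of bad boxes contains at most $C' C'' \log^2 n$ vertices in total.

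It remains to observe that any connected component of $G$ other than the giant must be contained in the vertex set of some connected cluster of bad boxes. Indeed, by the construction of \emph{good}, all good boxes lying in a single $\mathbb{Z}^d$-connected component of good boxes get merged into one component of $G$, and by supercriticality of the good process (with the domination above, good boxes percolate and form a unique infinite cluster in the usual sense) this merged component is the giant of $G$. Hence any vertex outside the giant lies in a bad box, and its $G$-component is contained in a single connected cluster of bad boxes, whence its size is $O(\log^2 n)$. The main obstacle is designing the \emph{good} event so that properties (i) and (ii) both hold while leaving $\mathbb{P}(\text{bad})$ small for every $r > r_g$; this is the standard (but somewhat delicate) renormalization for continuum percolation, and is essentially what Section~\ref{sec:renormalization} will carry out, so the bulk of the proof is bookkeeping once that scheme is in place.
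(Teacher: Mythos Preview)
The paper does not prove this statement; it is quoted as Theorem~10.18 of Penrose~\cite{Pen03}. Your sketch follows the standard renormalization route, which is indeed how such results are proved, but two steps are off.

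First, the scale is reversed. With boxes of side $\alpha r$ and $\alpha$ \emph{small}, a box is empty with probability $e^{-(\alpha r)^d}$, which for $r$ near $r_g=\Theta(1)$ stays bounded away from $0$ as $\alpha\downarrow 0$; nothing local on that scale can have probability close to $1$ uniformly over $r>r_g$. The renormalization that drives $\mathbb{P}(\text{bad})$ to $0$ uses boxes of side $Mr$ with $M$ \emph{large}, exactly as in Section~\ref{sec:renormalization} (Lemma~\ref{lem:Pisztorapenrose}), and the ``good'' event there includes not only a crossing clause but a local \emph{uniqueness} clause (only one component of diameter exceeding $Mr/5$).

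Second, and more importantly, your concluding step --- ``any vertex outside the giant lies in a bad box, and its $G$-component is contained in a single connected cluster of bad boxes'' --- is false under any reasonable definition of good. A good box can certainly contain small components disjoint from its crossing cluster; worse, a $*$-connected shell of bad boxes can enclose an entire region of good boxes whose crossing clusters merge into a large component that is nevertheless cut off from the giant. The actual argument does not trap the component in bad boxes but in a \emph{hole} of the giant good-box cluster: a non-giant component is separated from that cluster by a $*$-connected set of bad boxes, a Peierls bound shows such sets have $O(\log n)$ boxes, isoperimetry then bounds the enclosed region by $O((\log n)^{d/(d-1)})\le O(\log^2 n)$ boxes, and a Poisson bound on the total number of points in that region finishes. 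Your sketch has the right two ingredients (Peierls for bad boxes, Poisson for occupancy) but glues them through an incorrect containment; the missing idea is uniqueness plus the hole/separating-surface argument.
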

\subsection{Bernoulli percolation}
We also need results from Bernoulli percolation. Consider an i.i.d.\ family of independent Bernoulli variables with mean $p$, indexed by the edges of $\mathbb{Z}^d$, called bonds. We say that the bond $e$ is open if its associated variable is equal to 1. For a given realization, we generate a subgraph of $\mathbb{Z}^d$ with edges given by the open bonds, this random graph model is known as Bernoulli bond percolation. For $d\geq 2$, it is known that this model undergoes a non-trivial phase transition: there exists $0 < p_c^{\mathrm{bond}} < 1$ such that if $p>p_c^{\mathrm{bond}}$ then a.s.\ there is a unique (maximal) infinite connected component $\mathcal{C}$ (see for example~\cite{bollobaspercolation}), which we call the infinite cluster. 
If one considers variables indexed by the vertices of $\mathbb{Z}^d$, called a site, instead of bonds we have Bernoulli site percolation. We say that a site is open if its associated variable is 1. We say that a path of sites is open if all its sites are open. As in Bernoulli bond percolation, for $d\geq 2$ there exists $0 < p_c^{\mathrm{site}} < 1$ such that if $p>p_c^{\mathrm{site}}$ then there is a.s.\ a unique infinite cluster.

We denote by $\mathbb{P}^{\mathrm{site}}_p$ the law of Bernoulli site percolation with parameter $p$, and by $\mathbb{P}^{\mathrm{bond}}_p$ the law of Bernoulli bond percolation with parameter $p$, respectively. We denote by $\mathcal{C}^{\mathrm{bond}}_i(n)$ the $i$-th largest connected component of Bernoulli bond percolation on $\mathbb{Z}^d$ restricted to the box $\left[-n^{1/d}/2,n^{1/d}/2\right]^{d}$. Similarly, we denote by $\mathcal{C}_i^{\mathrm{site}}(n)$ the $i$-th largest connected component of Bernoulli site percolation on the same box. The following result is well known (for instance, see Theorems 1.2 and 3.1 of \cite{pisztora1996surface} for the result in the framework of bond percolation; the modification to site percolation is straightforward): 
\begin{proposition}\label{prop:largest_component_bernoulli_percolation}
    Consider Bernoulli site percolation on $\mathbb{Z}^d$ with $d\ge 2$, and let  $p\in (p_c^{\mathrm{site}},1]$. Then, a.a.s.
    $$
    |\mathcal{C}_1^{\mathrm{site}}(n)| =\Theta(n) \mbox{ and } |\mathcal{C}_2^{\mathrm{site}}(n)| = O(\log^\delta n),
    $$
    for some $\delta>0$ that depends only on $d,p$.
\end{proposition}

We also make use of the following percolation estimations of crossings in boxes for 2-dimensional Bernoulli site percolation. Given $[0, m]\times [0, n] \subseteq \mathbb{Z}^2$, a horizontal crossing is an open self-avoiding path entirely contained within the box such that the first vertex of the path has its $x$-coordinate equal to $0$ and the final vertex has its $x$-coordinate equal to $m$. We define a vertical crossing in a similar way. A \emph{long} crossing of a rectangle $[a, b]\times [c, d]$ is an open self-avoiding path along the longer dimension of the rectangle (ties broken arbitrarily). We will use the following result:

\begin{lemma} [Theorem 11.1 of~\cite{kesten}]\label{lem:crossingpercolation}
    Consider Bernoulli site percolation on $\mathbb{Z}^2$ with parameter $p>p_c^{\mathrm{site}}$. There exist positive constants $c_1,c_2,c_3$ such that
    \begin{equation}
    \begin{split}
        \mathbb{P}^{\mathrm{site}}_p\big(&\text{There exist at least $c_1n$ disjoint horizontal crossings} 
        \\ &\text{ in $([0, m]\times [0, n])\cap \mathbb{Z}^2$}\big)\geq 1-c_2me^{-c_3n}.
    \end{split}
    \end{equation}
\end{lemma}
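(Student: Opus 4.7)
\textbf{Proof plan for Lemma~\ref{lem:crossingpercolation}.} The plan is to combine planar duality with exponential decay of the subcritical dual, and then upgrade a single crossing result to a linear number of disjoint crossings via a block decomposition.

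First, I would exploit the supercriticality $p > p_c^{site}$ to obtain exponential decay on the matching (star) lattice. By the sharpness of the phase transition for 2D site percolation (Kesten's classical result, or Menshikov / Aizenman--Barsky), the closed sites on the star lattice form a subcritical percolation, so the closed cluster of any prescribed site reaches Euclidean distance $k$ with probability at most $Ce^{-ck}$ for constants $C,c>0$ depending only on $p$. By planar duality for site percolation on $\mathbb{Z}^2$, the absence of an open horizontal crossing of an $m\times h$ rectangle forces the existence of a closed top-to-bottom path in the star lattice inside that rectangle. Taking a union bound over the $O(m)$ possible starting positions on the top edge yields
\begin{equation*}
\mathbb{P}^{site}_p\!\left(\text{no open horizontal crossing of } [0,m]\times[0,h]\right)\le C'\,m\,e^{-c'h}.
\end{equation*}

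Second, to obtain $c_1 n$ \emph{disjoint} horizontal crossings, I would partition $[0,m]\times[0,n]$ into $\lfloor n/h\rfloor$ pairwise disjoint horizontal slabs $S_i=[0,m]\times[(i-1)h,ih]$ with $h$ a constant depending on $p$. Because distinct slabs use disjoint sites of $\mathbb{Z}^2$, the indicator events ``$S_i$ contains an open horizontal crossing'' are genuinely independent Bernoullis, each with success probability at least $1-C'me^{-c'h}$. A crossing found in $S_i$ is automatically disjoint from a crossing in $S_j$ for $i\ne j$. Applying a Chernoff bound to this collection of independent indicators then gives that a positive fraction of slabs carry a crossing, with failure probability of the stated form $c_2 m e^{-c_3 n}$.

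The main obstacle is calibrating the slab height $h$: if $h$ is too small, the per-slab crossing probability is inadequate, whereas if $h$ is too large (say $h\asymp \log m$) there are not enough slabs to yield a \emph{linear} number of crossings when $m$ is superpolynomial in $n$. To address this cleanly I would use a block renormalization: declare a $h\times h$ block ``good'' if a local crossing event occurs on an enlarged neighborhood, choose $h$ large enough (depending only on $p$) so that $\mathbb{P}(\text{good})\ge 1-\varepsilon$ for $\varepsilon$ arbitrarily small, and invoke the Liggett--Schonmann--Stacey stochastic domination to dominate the finite-range-dependent good-block field by an independent Bernoulli site percolation at parameter $p'$ arbitrarily close to $1$ on the coarsened lattice. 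On this coarsened lattice the single-crossing bound above is itself near-deterministic, and the slab argument sketched in the previous paragraph, applied on the renormalized grid of dimensions $(m/h)\times(n/h)$, produces $\Theta(n/h)=\Theta(n)$ disjoint renormalized left--right crossings. Each renormalized crossing is converted back into an open left--right crossing in the original lattice by stitching together the local crossings guaranteed by goodness of the constituent blocks, yielding the required $c_1 n$ disjoint horizontal crossings with failure probability $\le c_2 m e^{-c_3 n}$.
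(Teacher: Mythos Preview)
The paper does not prove this lemma; it simply cites it from Kesten's book, so there is no ``paper's own proof'' to compare against. I therefore evaluate your argument on its own merits.

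Your duality step and your renormalization step are both sound, but the slab decomposition has a genuine gap that renormalization does not repair. Fix any constant slab height $h'$. On an $M\times h'$ strip the bound you derive is $\mathbb{P}(\text{no crossing})\le C'M\beta^{h'}$ with $\beta<1$; once $M$ exceeds the constant $\beta^{-h'}/C'$ this bound is vacuous, and in fact for large $M$ each constant-height slab fails to contain a crossing with probability close to $1$. Passing to the renormalized $M\times N$ grid with $M=m/h$, $N=n/h$ changes nothing: you still face slabs whose horizontal extent $M$ can be arbitrarily large compared with their height, so the per-slab success probability is not bounded away from zero and Chernoff yields nothing. If instead you let $h'$ grow with $M$ (say $h'\asymp\log M$), you recover good per-slab probability but only $N/h'=\Theta(n/\log m)$ slabs, which is not the required $c_1 n$ crossings. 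The circularity you flagged in your own ``obstacle'' paragraph is real and is not resolved by your proposed fix.

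The argument Kesten actually uses replaces the slab step by a Menger/min-cut duality. In two dimensions, having fewer than $k$ vertex-disjoint open left--right crossings of $[0,M]\times[0,N]$ is equivalent to the existence of a top--bottom $*$-connected path containing fewer than $k$ open sites. After your renormalization to an i.i.d.\ field with parameter $p'$ close to $1$, take $k=c_1N$ and bound
\[
\mathbb{P}\bigl(\exists\ *\text{-path of length }\ell\ge N\text{ with }<c_1N\text{ open sites}\bigr)
\le \sum_{\ell\ge N} M\cdot 8^{\ell}\cdot \mathbb{P}\bigl(\mathrm{Bin}(\ell,p')<c_1\ell\bigr)
\le M\sum_{\ell\ge N}\bigl(8e^{-I_{p'}(c_1)}\bigr)^{\ell},
\]
which is at most $c_2 M e^{-c_3 N}$ once $p'$ is close enough to $1$ that $8e^{-I_{p'}(c_1)}<1$. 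This is exactly where the renormalization pays off (for generic $p>p_c^{site}$ one cannot beat the $8^\ell$ growth directly). The $c_1 N=\Theta(n)$ disjoint renormalized crossings are then converted to original crossings as you described.
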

Finally, we need a result relating the graph distance to the Euclidean distance between two vertices of the largest connected component in Bernoulli site percolation (see Lemma 2.3 of ~\cite{aberesistance}) on $\mathbb{Z}^d$ restricted to $[-n^{1/d}/2, n^{1/d}/2]^d$:
\begin{theorem}\label{them:chemical_distance_percolation}
 Consider Bernoulli site percolation on $\mathbb{Z}^d$ with $d\ge 2$, and let $p> p_c^{\mathrm{site}}$. There exist constants $\kappa_0, c$ such that a.a.s.\ the following holds: for any $x,y \in \mathcal{C}_1^{\mathrm{site}}(n)$, and any $\kappa >\kappa_0$, if $d_E(x,y) \leq \kappa \log n$, then
$d_{G}(x,y) \leq c \kappa  \log n.$
\end{theorem}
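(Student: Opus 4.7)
The approach is to reduce the statement to an Antal--Pisztora-type chemical distance estimate, followed by a union bound. The key probabilistic input, provable by standard renormalization for supercritical Bernoulli site percolation (partition $\mathbb{Z}^d$ into cubes of side $K$; declare a cube \emph{good} if it contains a unique strongly connected crossing cluster touching each face; good cubes dominate independent Bernoulli site percolation with parameter arbitrarily close to $1$ via Liggett--Schonmann--Stacey, and a Peierls-type argument in this very supercritical regime yields that chemical and Euclidean distances are comparable in the infinite cluster), is: there exist constants $\rho=\rho(p,d)>0$ and $c_1,c_2>0$ such that for all $x,y\in\mathbb{Z}^d$,
\begin{equation*}
\mathbb{P}^{site}_p\bigl(x\longleftrightarrow y \text{ in } \mathcal{C}^{site},\ d_G(x,y)>\rho\,d_E(x,y)\bigr)\le c_1 e^{-c_2 d_E(x,y)}.
\end{equation*}

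Given this, choose $\kappa_0$ large enough that $c_2\kappa_0>3$, and set $c:=7\rho$. For each pair of lattice points $x,y\in\Lambda_n$ with $d_E(x,y)\ge\kappa_0\log n$, the bound above fails with probability at most $c_1 n^{-c_2\kappa_0}$; the total number of such ordered pairs is at most $n^2$, so a union bound gives that a.a.s.\ every such pair in the infinite cluster satisfies $d_G(x,y)\le\rho\,d_E(x,y)$, and in particular $d_G(x,y)\le\rho\kappa\log n\le c\kappa\log n$ whenever $d_E(x,y)\le\kappa\log n$. For pairs with $d_E(x,y)<\kappa_0\log n$, pick an auxiliary $w\in\mathcal{C}^{site}$ with $d_E(x,w)\in[2\kappa_0\log n,3\kappa_0\log n]$ (such a $w$ exists a.a.s.\ by the positive density of the infinite cluster); then $d_E(w,y)\in[\kappa_0\log n,4\kappa_0\log n]$, both Antal--Pisztora bounds apply on the same a.a.s.\ event, and the triangle inequality yields $d_G(x,y)\le d_G(x,w)+d_G(w,y)\le 7\rho\kappa_0\log n\le c\kappa\log n$ since $\kappa>\kappa_0$.

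Finally, to pass from the infinite cluster $\mathcal{C}^{site}$ to the in-box cluster $\mathcal{C}^{site}(n)$, uniqueness of the supercritical infinite cluster together with exponential decay of the sizes of non-infinite clusters implies that a.a.s.\ $\mathcal{C}^{site}(n)=\mathcal{C}^{site}\cap\Lambda_n$, so chemical distance inside $\mathcal{C}^{site}(n)$ is bounded by chemical distance inside $\mathcal{C}^{site}$. Pairs within $O(\log n)$ of $\partial\Lambda_n$ (whose optimal paths could a priori exit $\Lambda_n$) are handled by running the same argument on a slightly enlarged ambient box and noting that, on the a.a.s.\ event, the relevant paths have diameter $O(\log n)$, hence stay inside the enlargement. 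The main obstacle is establishing the Antal--Pisztora input itself in the site setting, which is a classical but technically nontrivial renormalization result; granting it, the remainder is a routine union bound and bookkeeping of boundary effects.
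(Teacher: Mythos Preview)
The paper does not prove this statement; it simply quotes it as Lemma~2.3 of Abe~\cite{aberesistance}. Your overall strategy---invoke the Antal--Pisztora chemical distance estimate, take a union bound over pairs at Euclidean distance $\ge \kappa_0\log n$, and handle shorter pairs via an auxiliary point and the triangle inequality---is exactly the standard route and is essentially how the cited result is obtained.

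There is, however, a genuine slip in your passage from $\mathcal{C}^{site}$ to $\mathcal{C}^{site}(n)$. You write that ``chemical distance inside $\mathcal{C}^{site}(n)$ is bounded by chemical distance inside $\mathcal{C}^{site}$'', but the inequality goes the other way: $\mathcal{C}^{site}(n)$ is a subgraph, so its graph distances are \emph{at least} as large. The correct argument is that the Antal--Pisztora path has length $O(\kappa\log n)$ and hence Euclidean diameter $O(\kappa\log n)$, so for endpoints at distance $\Omega(\kappa\log n)$ from $\partial\Lambda_n$ the path automatically stays inside $\Lambda_n$ and therefore inside $\mathcal{C}^{site}(n)$. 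Your ``enlargement'' fix for boundary pairs does not close the gap as written: showing that paths stay inside a larger box $\Lambda_{n'}$ only bounds distances in $\mathcal{C}^{site}(n')$, not in $\mathcal{C}^{site}(n)$. To finish correctly near the boundary one needs either a half-space version of the chemical distance estimate, or (equivalently) to run the renormalization argument directly inside $\Lambda_n$ so that the constructed path of good blocks never leaves the box. This is standard but does require an explicit argument rather than an appeal to enlargement.
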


We say that two sites of $\mathbb Z^d$ are diagonal neighbors if they are at distance $1$ with respect to the $\ell^{\infty}$-norm. A finite set $S\subseteq \mathbb Z^d$ is said to be \emph{diagonally connected} if any two sites can be connected by a path contained in $S$ whose consecutive sites are diagonal neighbors. For a finite connected set $S \subseteq \mathbb{Z}^d$, its \emph{interior boundary} $\partial^{\mathrm{int}}S$ is defined as $\{x \in S: \exists y \notin S, x \text{ adjacent to } y\}$, and similarly, its \emph{external boundary} $\partial^{\mathrm{int}}S$ is defined as $\{x \notin S: \exists y in S, x \text{ adjacent to } y\}$.

\begin{lemma}[Lemma 2 of \cite{timar2013boundary}]\label{lem_Timar}
    Let $S$ be a finite connected subset of $\mathbb Z^d$. Then, the external boundary of $S$ is diagonally connected. In particular, any finite connected subset of $\mathbb Z^d$ has a diagonally connected external $\mathbb Z^d$-boundary,
and if $S \subseteq \mathbb Z^d \cap [-n^{1/d}/2,n^{1/d}/2]^d$, the outer $\mathbb Z^d$-boundary of $S$ in any component of $\mathbb Z^d \cap [-n^{1/d}/2,n^{1/d}/2]^d \setminus S$ is diagonally connected. Equivalently, for every maximally connected component $S' \subseteq  (\mathbb{Z}^d \cap [-n^{1/d}/2,n^{1/d}/2]^d) \setminus S$, the set $\partial^{\mathrm{int}}S' \cap [-n^{1/d}/2,n^{1/d}/2]^d$ is diagonally connected. 
\end{lemma}

\section{Renormalization scheme}\label{sec:renormalization}
In this section we explain the renormalization scheme that we use in order to obtain the upper bound of Theorem~\ref{ref:main1}. The objective of the renormalization is, similar to the one used by Abe in ~\cite{aberesistance}: to prove that there is a backbone of crossings of $L_1$. In our setup we encounter several technical challenges proper of random geometric graphs that we have to consider. From now on, we will use the term vertices exclusively for vertices of $\mathcal{G}_n$. Fix throughout this section $M > 0$ large enough. For each site $\mathrm{i} \in \mathbb{Z}^d$ we define the cube 
\begin{equation}
    \mathrm{C_i}=Mr\mathrm{i}+[-Mr/2, Mr/2)^{d}.
\end{equation}
In words, $\mathrm{C_i}$ is an axis-parallel cube with center $Mr\mathrm{i}$ and side length $Mr$. We say it has \emph{index} $\mathrm{i}$. We say that two cubes $\mathrm{C_i}$ and $\mathrm{C_j}$ are neighbors if $\mathrm{i}\sim \mathrm{j}$ (on the renormalized lattice resulting from tessellation of $\mathbb{R}^d$ into cubes of sidelength $Mr$). Denote by $\mathcal{K}_n$ the set of cubes that are totally contained within $\Lambda_n= \left[ -\frac{n^{1/d}}{2},\frac{n^{1/d}}{2}\right]^{d}$. 

Now, let $\mu=\mu(d)$ be the largest constant so that any two vertices of $\mathcal{G}_n$ in adjacent cubes, that is, sharing a $(d-1)-$dimensional face, of side length $r/\mu$ are connected by an edge (for example, in $d=2$, $\mu=\sqrt{5}$). We will consider such a tessellation of side length $r/\mu$ later on as well, for now we use it to define crossings.
For $a_1 < a_2$, $b_1 < b_2$, let $U=[a_1,a_2]\times [b_1,b_2]\times U_{d-2}$ be a parallelepiped in $\mathbb{R}^d$, where $U_{d-2}$ a parallelepiped in $\mathbb{R}^{d-2}$.
We say that the parallelepiped $U$ has a \emph{crossing component}, if there exists a connected component of $\mathcal G_n$ within the parallelepiped that contains vertices at Euclidean distance at most $\frac{r}{\mu}$ from each of the $2d$ faces of the parallelepiped. Similarly as we did for $\Z^d$, we define crossings. A \emph{horizontal crossing of $U$} is a self-avoiding path
\[
w=(w_0,w_1,\ldots,w_m), 
\]
of vertices of $\cG_n$ such that $w_j\in U$ for all $0\le j\le m$, such that the first vertex satisfies $(w_0)_1\in (a_1,a_1+r/\mu)$, and the last vertex satisfies $(w_m)_1\in (b_1-r/\mu,b_1)$, where $(z)_i$ denotes the $i$-th coordinate of $z\in\R^d$.
 A vertical crossing is defined analogously, exchanging the first coordinate by the second coordinate and $a_1,a_2$ by $b_1,b_2$, respectively. As before, we use also the definition of a \emph{long crossing} for a crossing  in the longer dimension, that is, whose length is $\max\{a_2-a_1, b_2-b_1\}$. We remark that while a horizontal (or vertical) crossing is a self-avoiding path, a crossing component is in general not a path (but it contains such a path by definition). Given a connected component $G'\subseteq \mathcal{G}_n$, the diameter of $G'$ is defined as the maximum Euclidean distance between any two vertices of $G'$.

We say that the cube $\mathrm{C_i} \in \mathcal{K}_n$ is \textit{good} if the following conditions hold (see Figure \ref{fig:goodcube}):
\begin{enumerate}
    \item $\mathrm{C_i}\cup \mathrm{C_j}$ contains a crossing component for each neighbor $\mathrm{j}\sim \mathrm{i}$ if $\mathrm{C_j}\in\mathcal{K}_n$.
    \item $\mathrm{C_i}$ has only one connected component with diameter larger than $Mr/5$.
\end{enumerate}



\begin{figure}[htbp!]
\centering
\begin{minipage}{0.48\textwidth}
    \centering
    \begin{tikzpicture}
        \def\squaresize{2}
        
        \draw[thick] (0, 0) rectangle (\squaresize, \squaresize);
        \node at (0.2, 1.8) {\footnotesize C$_i$};

        \draw[thick] (0, \squaresize) rectangle (\squaresize, 2*\squaresize); 
        \draw[thick] (0, -\squaresize) rectangle (\squaresize, 0); 
        \draw[thick] (-\squaresize, 0) rectangle (0, \squaresize); 
        \draw[thick] (\squaresize, 0) rectangle (2*\squaresize, \squaresize); 

        \draw[red, thick] (-\squaresize, 0.5*\squaresize+0.1) 
        .. controls (-0.5*\squaresize, 0.5*\squaresize) 
        and (0.5*\squaresize, 1.5*\squaresize) .. (\squaresize , 0.3*\squaresize); 
        
        \draw[blue, thick] (0.1*\squaresize, 2*\squaresize) 
        .. controls (0.9*\squaresize, 0.8*\squaresize) 
        and (0.8*\squaresize, 0.5*\squaresize) .. (0.7*\squaresize, 0); 
        
        \draw[green, thick] (2*\squaresize, 0.5*\squaresize) 
        .. controls (1.5*\squaresize, 0.5*\squaresize) 
        and (0.5*\squaresize, -0.5*\squaresize) .. (0, 0.5*\squaresize); 
        
        \draw[orange, thick] (0.5*\squaresize, -\squaresize) 
        .. controls (0.5*\squaresize, -0.5*\squaresize) 
        and (0.2*\squaresize, 0.5*\squaresize) .. (0.3*\squaresize,\squaresize); 
    \end{tikzpicture}
    \caption{The central cube \(\mathrm{C_i}\) and its surrounding paths. The colored lines represent vertex paths of \(\cG_n\).}
    \label{fig:goodcube}
\end{minipage}%
\hfill
\begin{minipage}{0.48\textwidth}
    \centering
    \begin{tikzpicture}
        \draw[black, thick] (0,0,0) -- (4,0,0) -- (4,4,0) -- (0,4,0) -- cycle; 
        \draw[black, thick] (0,0,4) -- (4,0,4) -- (4,4,4) -- (0,4,4) -- cycle; 
        \draw[black, thick] (0,0,0) -- (0,0,4); 
        \draw[black, thick] (4,0,0) -- (4,0,4); 
        \draw[black, thick] (0,4,0) -- (0,4,4); 
        \draw[black, thick] (4,4,0) -- (4,4,4); 

        \draw (1, 2, 4) -- (4, 2, 4);
        \draw (1, 2.2, 4) -- (4, 2.2, 4);
        \draw (4, 2, 4) -- (4, 2, 0);
        \draw (4, 2.2, 4) -- (4, 2.2, 0);

        \draw[red] (0, 2, 4) -- (1, 2, 4);
        \draw[red] (0, 2.2, 4) -- (1, 2.2, 4);
        
        \foreach \x in {5,...,20} {
            \pgfmathsetmacro{\xDiv}{\x/5} 
            \draw (\xDiv, 2, 4) -- (\xDiv, 2.2, 4);
            \draw (\xDiv, 2.2, 4) -- (\xDiv, 2.2, 0);
        }

        \foreach \x in {0,...,5} {
            \pgfmathsetmacro{\xDiv}{\x/5} 
            \draw[red] (\xDiv, 2, 4) -- (\xDiv, 2.2, 4);
            \draw[red] (\xDiv, 2.2, 4) -- (\xDiv, 2.2, 0);
        }

        \foreach \x in {0,...,20} {
            \pgfmathsetmacro{\xDiv}{\x/5} 
            \draw (1,2.2,\xDiv) -- (4,2.2,\xDiv);
            \draw (4,2,\xDiv) -- (4,2.2,\xDiv);
        }

        \foreach \x in {0,...,20} {
            \pgfmathsetmacro{\xDiv}{\x/5} 
            \draw[red] (0,2.2,\xDiv) -- (1,2.2,\xDiv);
        }
        
        \node at (3.7, 0.3, 4) {\(\Lambda_n\)};
        \node at (4.3, 2, 0) {\(\mathcal{K}_n^2\)};
    \end{tikzpicture}
    \caption{A \(2\)-dimensional slice \(\mathcal{K}_n^2\), with a red \(\alpha\)-logarithmic strip.}
    \label{fig:bigcube}
\end{minipage}
\end{figure}
By a straightforward modification of a result of Penrose and Pisztora (Theorem 2 of~\cite{pisztorapenrose})  (in the original definition of Penrose and Pisztora, the crossing component is at distance at most $r$ from all the faces of the parallelepiped, but the proof works in exactly the same way for $r/\mu$) we may state the following lemma:
\begin{lemma}\label{lem:Pisztorapenrose}
    Let $d\geq 2$, let $M$ be a large enough constant, and let $r \ge (1+\eps)r_g$ for arbitrarily small $\eps > 0$. Fix $\mathrm{C_i} \in \mathcal{K}_n$. If $\mathrm{i}\sim \mathrm{j}$, there exists a constant  $\gamma>0$ such that 
    \begin{equation}
    \begin{split}
        \mathbb{P}&(\mathrm{C_i}\cup \mathrm{C_j} \text{ has a crossing component}) \geq 1-e^{-\gamma Mr^d},
        \\ \mathbb{P}&(\mathrm{C_i} \text{ has only one connected component with diameter larger than $Mr/5$})\geq 1-e^{-\gamma Mr^d}.
    \end{split}
    \end{equation}
\end{lemma}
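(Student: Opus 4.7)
The plan is to reduce both claims to Theorem~2 of~\cite{pisztorapenrose}, viewing the hypothesis $r\ge (1+\eps)r_g$ as placing the Poisson Boolean model on $\mathrm{C_i}\cup\mathrm{C_j}$ in the supercritical regime. The only substantive changes from that theorem are to replace the distance $r$ to each face in the original statement by $r/\mu$, and to couple the single-box uniqueness conclusion with the two-box crossing conclusion.

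For part~(1), I would rescale all lengths by $1/r$, so that $\mathrm{C_i}\cup\mathrm{C_j}$ becomes a parallelepiped of sides $(M,\ldots,M,2M)$ populated by a PPP of intensity $r^d$ with unit connection radius; the assumption $r\ge (1+\eps)r_g$ puts this Boolean model strictly inside the supercritical phase. Theorem~2 of~\cite{pisztorapenrose} then supplies, with probability at least $1-e^{-\gamma' M r^d}$, a connected component within the rescaled box whose vertices come within distance~$1$ of every face. To upgrade this to distance $1/\mu$, I would either inspect their proof directly (the face distance only appears there to guarantee a boundary slab of positive width in which to find a connecting vertex, so replacing $1$ by $1/\mu$ only alters multiplicative constants), or tile the width-$1$ boundary slab into $\mu$ parallel sub-slabs of width $1/\mu$ and iterate, with the union bound absorbed into the exponent.

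For part~(2), I would use a standard Peierls-type argument at the same renormalized scale. If $\mathrm{C_i}$ carried two disjoint components each of diameter exceeding $Mr/5$, they would have to be separated by a $\ast$-connected surface of sub-cubes of side $\Theta(r/\mu)$ in which no local crossing component exists. Each such sub-cube fails with probability $e^{-\Theta(r^d)}$ by the local Penrose--Pisztora estimate applied at scale $r/\mu$, while the entropy of admissible separating surfaces through $\mathrm{C_i}$ is at most exponential in the cross-sectional count $\Theta(M^{d-1})$ of sub-cubes. An energy-entropy balance then gives the bound $e^{-\gamma Mr^d}$ after possibly shrinking $\gamma$.

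The main obstacle is purely bookkeeping: one must verify that the resulting rate is linear in $M$ (not merely in $r^d$), which ultimately comes from the surface carrying $\Theta(M^{d-1})$ sub-cubes each contributing an independent exponential factor $e^{-\Theta(r^d)}$, for a combined bound $e^{-\Theta(M^{d-1}r^d)}$ that easily dominates the entropy term. Aligning these constants and taking $M$ large enough finishes the argument.
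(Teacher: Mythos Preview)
The paper does not give a self-contained proof here: the lemma is stated as a straightforward modification of Theorem~2 of Penrose and Pisztora, which already packages together \emph{both} the existence of a crossing component and the uniqueness of large-diameter components in a supercritical box. The only change flagged is replacing the distance-$r$ face condition by distance-$r/\mu$, and the paper asserts their proof carries over verbatim. Your treatment of part~(1) is exactly this.

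For part~(2) you depart from the paper by setting up a fresh Peierls argument at scale $r/\mu$ rather than invoking the uniqueness half of the Penrose--Pisztora theorem. This argument has a real gap. At scale $r/\mu$ the event ``no local crossing component'' is simply emptiness of the sub-cube, and a fixed sub-cube is empty with probability $e^{-(r/\mu)^d}$. Your energy--entropy balance asserts that the combined factor $e^{-\Theta(M^{d-1}r^d)}$ ``easily dominates'' the entropy $e^{\Theta(M^{d-1})}$ of $*$-connected surfaces, but this requires $r^d$ to exceed a fixed dimension-dependent constant (roughly $\mu^d$ times the logarithm of the $*$-connectivity constant of $\mathbb{Z}^d$). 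For $r$ close to $(1+\eps)r_g$, which is of order~$1$, this inequality fails and the Peierls bound is vacuous: the sum over surfaces of size $k\ge cM$ behaves like $\sum_k C^k e^{-k(r/\mu)^d}$ with $C e^{-(r/\mu)^d}>1$. Since the lemma must hold uniformly for all $r\ge(1+\eps)r_g$, a single-scale Peierls argument at the fixed scale $r/\mu$ cannot deliver it; this is precisely why the Penrose--Pisztora uniqueness proof uses a coarse-graining whose block size diverges as one approaches criticality. The clean fix is to do what the paper does and quote their theorem for both conclusions at once.
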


For $\mathrm{C_i} \in \mathcal{K}_n$, define now $U_{\mathrm{i}}$ to be the indicator random variable that the cube $\mathrm{C_i}$ is good.
We have the following important corollary as a straightforward application of Lemma~\ref{lem:Pisztorapenrose}.

\begin{corollary}\label{cor:domination_good_cubes}
    Let $d\ge 2$ and $r\ge (1+\eps)r_g$. For any $0<p<1$, there exists $M=M(p,\eps)$ large enough, such that the good cubes in $\mathcal{K}_n$ stochastically dominate a Bernoulli site percolation process with parameter $p$.
\end{corollary}
\begin{proof}
By Lemma~\ref{lem:Pisztorapenrose}, for $M$ large enough there is a constant $\gamma'>0$ such that for any $\mathrm{C_i}
\in \mathcal{K}_n$ we have $\mathbb{P} (U_{\mathrm{i}}=1) \geq 1- e^{-\gamma' M r^d}$. Note that the indicator events $U_{\mathrm{i}}$ and $U_{\mathrm{j}}$ are independent if $\mathrm{i}$ and $\mathrm{j}$ are at graph distance in $\mathbb{Z}^d$ of at least 3. By Theorem 0.0 of~\cite{liggettdomination}, for $M$ large enough, $(U_{\mathrm{i}})_{\mathrm{i} \in \mathcal{K}_n}$ stochastically dominates a family $(Y_{\mathrm{i}})_{\mathrm{i} \in \mathcal{K}_n}$ of i.i.d.\ Bernoulli variables with parameter $p'$, where $p' \to 1$ as $M \to \infty$. 
\end{proof}

\begin{remark}\label{rem:domination_largest_component_good_cubes}

    \begin{itemize}
        \item  If $p>p_c^{\mathrm{site}}$  in Corollary \ref{cor:domination_good_cubes}, by Proposition \ref{prop:largest_component_bernoulli_percolation}, the set of good cubes has a connected component of size $\Theta(n/M^d)$. Moreover, if a cube $\mathrm{C_i}$ is good and $\mathrm{C_i}$ belongs to this connected component, then observe that any vertex $x \in \mathrm{C_i}\cap \mathcal{G}_n$ belonging to the largest component of $\cG_n[\mathrm{C_i}]$ (the subgraph of $\cG_n$ induced by all vertices inside  $\mathrm{C_i}$) belongs a.a.s.\ to $L_1$: indeed, since each good cube contains at least one vertex, and the giant component of the graph of good cubes contains $\Theta(n/M^d)$ cubes, by Theorem~\ref{thm:giantcomponents}, a.a.s.\  it is the giant component of $G$.
        \item 
        The importance of good cubes is that they allow us to construct paths of $\mathcal G_n$: Given two good cubes $\mathrm{C_i},\mathrm{C_j}$ that are neighbors, and vertices $x\in \mathrm{C_i}, y\in \mathrm{C_j}$ that belong to the (unique) giant components of $\cG_n[\mathrm{C_i}]$ and $\cG_n[\mathrm{C_j}]$, respectively, there exists a path connecting $x,y$ that is totally contained within $\mathrm{C_i}\cup \mathrm{C_j}$: Indeed, by definition, $\mathrm{C_i} \cup \mathrm{C_j}$ has a crossing component which contains $x$ and $y$. More generally, the same holds if $\mathrm{C_i}$ and $\mathrm{C_j}$ belong to the same connected component in the set of good cubes; one may just extract a path of good cubes, such that $x$ belongs to the first cube  on the path, $y$ belongs to the last cube on the path, and $x,y$ are connected by a path of vertices of $\cG_n$  totally contained within the path of good cubes. 
    \end{itemize}
\end{remark}

A 2-dimensional slice of $\mathcal{K}_n$ is a subset of cubes with index $\mathrm{i}$ such that all but $2$ coordinates are fixed (see Figure~\ref{fig:bigcube} for an example). We denote such a slice by $\mathcal{K}_n^{2}$. Since every $2$-dimensional slice is isomorphic to $\mathbb{Z}^2$ restricted to a box of a certain side length, by abuse of notation we can thus also define both vertical and horizontal orientations for each slice. Thus, w.l.o.g., we may suppose in the following that $\mathcal{K}^{2}_{n}$ is the $2$-dimensional slice made out of cubes $\mathrm{C_i}$ where only the first two coordinates are not equal to 0.  

Recall that for a cube $\mathrm{C_j}$, we denote by $\mathrm{j}_1,\ldots, \mathrm{j}_d$ its $d$ coordinates. Fix $\alpha>0$. Partition the slice $\mathcal{K}_n^{2}$ into disjoint vertical strips, each formed by
$\lceil \alpha\log n\rceil$ consecutive columns of cubes (see Figure ~\ref{fig:bigcube}). Formally, for $s \in \mathbb{Z}$, denote the $s$-th $\alpha$-logarithmic vertical strip by 
    \begin{equation}\label{eq:definition_stips}
          \mathrm{V}_s:= \{\mathrm{C_j} \in \mathcal{K}_n^2: \lceil \alpha \log n \rceil-s \leq \mathrm{j}_1< s+\lceil \alpha \log n \rceil \}.
    \end{equation}
    Recall also that strips at the boundary of the slice might have a smaller width, in which case we say that the strip is on the boundary of $\Lambda_n$. In a similar way, we define the $s$-th $\alpha$-logarithmic horizontal strip as \begin{equation}
        \mathrm{H}_s:= \{\mathrm{C_j} \in \mathcal{K}_n^2: \lceil \alpha \log n \rceil-s \leq \mathrm{j}_2< s+\lceil \alpha \log n \rceil \}.
    \end{equation}

    We prove the following result regarding long crossings of good cubes inside of $\Lambda_n$: 
\begin{lemma}\label{lem:crossingsstrips}
    Let $d \ge 2$ and $\eps>0$, and assume $(1+\eps)r_g \le r =O(r_c)$ and assume $M$ large enough. Let $\alpha$ be a large enough positive constant.
    Then, there exists $c>0$ such that a.a.s., every $\alpha$-logarithmic strip that is not on the boundary of $\Lambda_n$ has at least $c \log n$ disjoint long crossings of good cubes.
    Then, a.a.s.\ there exists $c > 0$ such that each $\alpha$-logarithmic strip that is not on the boundary of $\Lambda_n$ has at least $c \log n$ disjoint long crossings of good cubes.
\end{lemma}
\begin{proof}
    By Remark~\ref{rem:domination_largest_component_good_cubes}, for $M$ large enough, the good cubes of the tessellation stochastically dominate a supercritical Bernoulli site percolation process. Observe that the number of $2$-dimensional slices of $\mathcal{K}_n$ is $O(n^{1/d})$, and also the number of $\alpha$-logarithmic strips inside such a slice is $O(n^{1/d})$. Hence, for $\alpha$ large enough, by Lemma~\ref{lem:crossingpercolation} applied to every $\alpha$-logarithmic strip, together with a union bound, the probability that at least one strip does not have $c\log n$ long crossings of good cubes is at most
    $O(n^{2/d})c_2n^{1/d} e^{-c_3 \alpha \log n}=o(1)$. The lemma follows.
\end{proof}

\section{Random paths of good cubes}\label{sec:randompaths}

The framework described in this section is going to be important to define a flow, which in turn will be useful to get our final upper bounds on the cover time through the effective resistance.
We first introduce a structure of random paths between good cubes of a 2-dimensional slice. This construction was first introduced by Abe ~\cite{aberesistance}.
We focus here on the cubes $\mathrm{C}$ introduced in Section~\ref{sec:renormalization}, although the construction extends more generally to any collection of good cubes with suitable connectivity properties dominating a supercritical Bernoulli site percolation process.

Our first step will then be to show that for any vertices $x, y$ of $\cG_n$ belonging to good cubes $\mathrm{C_i}$ and $\mathrm{C_j}$, respectively, and belonging to the largest components of these cubes (and hence in particular to $L_1$), the effective resistance between $x$ and $y$ can be suitably bounded from above. We thus focus on good cubes only. In this section we will allow $M$ to change slightly with $n$ so that a cube $\mathrm{C_i}$ is either totally contained within $\Lambda_n$ or shares at most some lower-dimensional faces with $\Lambda_n$, and so $\Lambda_n$ is completely covered by the tessellation.

Let \(\mathcal L_1\) and \(\mathcal L_2\) be two self-avoiding paths of cubes, each endowed with the orientation induced by the ordering of its cubes, and let \(\mathrm C \in \mathcal L_1 \cap \mathcal L_2\). We define the \emph{concatenation of \(\mathcal L_1\) and \(\mathcal L_2\) at \(\mathrm C\)} as the path obtained by following \(\mathcal L_1\) up to \(\mathrm C\), and then continuing along \(\mathcal L_2\) from \(\mathrm C\) onward. If \(\mathrm C\) is the last cube in \(\mathcal L_1 \cap \mathcal L_2\) with respect to this ordering in $\mathcal L_2$, then this concatenated path is self-avoiding.

In the same way we define the concatenation of two self-avoiding paths $\mathcal{J}_1$ and $\mathcal{J}_2$ of $\cG_n$ at a vertex $x$ that belongs to both paths, as the path that results from starting at $\mathcal{J}_1$ until hitting $x$ and after that continuing with $\mathcal{J}_2$. 

Recall the definitions of $\mathrm H_s,\mathrm{ V}_s$, see \eqref{eq:definition_stips}. Recall that by Lemma ~\ref{lem:crossingsstrips}, for $\alpha$ and $M$ large enough, a.a.s.\ there are at least $\lfloor c\log n \rfloor$ long crossings of good cubes in each $\alpha$-logarithmic strip (that is not on the boundary) of every 2-dimensional slice of $\Lambda_n$. In what follows we condition on this event.

Without loss of generality suppose in the following that $\mathcal{K}^{2}_{n}$ is the 2-dimensional slice made out of cubes $\mathrm{C_i}$ with only the first two coordinates not equal to 0. Note that strips at the boundary of the slice might have a smaller width.

Select $\lfloor c\log n \rfloor$ disjoint long crossings of good cubes for each strip $\mathrm{H}_s,\mathrm{V}_s$ that is not on the boundary. We enumerate the vertical crossings of good cubes of the vertical strip $\mathrm{V}_s$, counting from right to left and denote them by $\mathrm{V}_s^{\ell}$ for $1\leq \ell \leq \lfloor c\log n \rfloor$ (this way of enumerating the vertical crossings is going to be important later). Similarly, we enumerate the horizontal crossings of $\mathrm{H}_s$, this time counting from bottom to top and denoting them by $\mathrm{H}_s^{\ell}$, for $1 \le \ell \le \lfloor c\log n \rfloor$.    
    
    For $r,s \in \mathbb{Z}$, denote by $\mathrm{T}_{s,r}$ the smallest $\emph{parallelepiped}$ that contains all cubes $\mathrm{C_i}$ from $\mathrm{V}_s\cap \mathrm{H}_r$. Note that the parallelepiped $\mathrm{T}_{s,r}$ that is not at the boundary of the 2-dimensional slice has Euclidean side length $Mr\lceil \alpha \log n \rceil$ in the first 2 dimensions and $Mr$ in the others, as the side length of a cube $\mathrm{C_i}$ is $Mr$. We consider in the construction that follows only parallelepipeds that are not at the boundary, as this will be sufficient for our purposes. Also note that any vertical crossing of good cubes $\mathrm{V}^{\ell}_{s}$ of $\mathrm{V}_s$ and any horizontal crossing of good cubes $\mathrm{H}^{\ell}_r$ of $\mathrm{H}_r$ intersect in at least one good cube $\mathrm{C}$ that lies inside of $\mathrm{T}_{s,r}$, as the slice is a 2-dimensional object. Let now $\mathcal{L}$ be a self-avoiding path of parallelepipeds $\mathrm{T}$ between $\mathrm{T}_{s_1,r_1}$ and $\mathrm{T}_{s_2,r_2}$ (that is, a sequence of adjacent parallelepipeds of the same slice such that no parallelepiped appears more than once, see Figure ~\ref{fig:path_mathcal_L}). We then construct an ordered set $(\mathrm{T}_{a_1,b_1},\mathrm{T}_{a_2,b_2},...,\mathrm{T}_{a_k,b_k})$ of parallelepipeds that belong to $\mathcal{L}$ and that we use to identify the change of its direction in the following way: 
    \begin{itemize}
    
    \item Set $\mathrm{T}_{a_1,b_1}=\mathrm{T}_{s_1,r_1}$. Let $\mathrm{T}_{a_\ell,b_\ell} \in \mathcal{L}$, and suppose without loss of generality that the next step of the path $\mathcal{L}$ is to the right. $\mathrm{T}_{a_{\ell+1},b_{\ell+1}}$ is then defined as the last parallelepiped of $\mathcal{L}$ that is visited by going only to the right of $\mathrm{T}_{a_\ell,b_\ell}$. The construction is analogous if the path $\mathcal{L}$ after $\mathrm{T}_{a_\ell,b_\ell}$ goes to the left, up or down, respectively. We finish at $\mathrm{T}_{a_k,b_k}=\mathrm{T}_{s_2,r_2}$. If $(a_{i+1},b_{i+1})-(a_i,b_i)=me_1$ for some $m$, we say that $\mathcal{L}$ has horizontal orientation at step $(a_i,b_i)$, and analogously, we say that it has vertical orientation if instead of $e_1$ we have $e_2$. 
    \end{itemize}
    
    Thus, $(\mathrm T_{a_\ell,b_\ell})^k_{\ell=1}$ records the endpoints of the maximal straight segments of $\mathcal L$.
    See Figure ~\ref{fig:path_mathcal_L} for an example of the construction.
\begin{figure}[htb]
\begin{tikzpicture}
\def\squarex{1} 
\def\squarey{1} 

\def\middle{4} 

\coordinate (start_x) at (0, 0);

\draw[thick] (0,0) rectangle (1,1);
\draw[thick] (1,0) rectangle (2,1);
\draw[thick] (2,0) rectangle (3,1);
\draw[thick] (3,0) rectangle (4,1);
\draw[thick] (4,0) rectangle (5,1);
\draw[thick] (5,0) rectangle (6,1);

\node[anchor=north west, scale=0.7] at (0,1) {$T_{a_1, b_1}$};
\draw[->] (1.5,0.5) -- (3.5,0.5);

\draw[thick] (5,-1) rectangle (6,0);
\draw[thick] (5,-2) rectangle (6,-1);
\draw[thick] (5,-3) rectangle (6,-2);
\node[anchor=north west, scale=0.7] at (5,1) {$\mathrm{T}_{a_2, b_2}$};

\draw[->] (5.5,-0.3) -- (5.5,-1.7);

\draw[thick] (6,-3) rectangle (7,-2);
\draw[thick] (7,-3) rectangle (8,-2);
\node[anchor=north west, scale=0.7] at (5,-2) {$\mathrm{T}_{a_3, b_3}$};

\draw[->] (5.8,-2.5) -- (7.2,-2.5);

\draw[thick] (7,-3) rectangle (8,-2);
\draw[thick] (7,-2) rectangle (8,-1);
\draw[thick] (7,-1) rectangle (8,0);
\draw[thick] (7,0) rectangle (8,1);
\draw[thick] (7,1) rectangle (8,2);
\draw[thick] (7,2) rectangle (8,3);
\draw[thick] (7,2) rectangle (8,3);
\node[anchor=north west, scale=0.7] at (7,-2) {$\mathrm{T}_{a_4, b_4}$};

\draw[->] (7.5,-1.2) -- (7.5,1.2);

\draw[thick] (7,2) rectangle (8,3);
\draw[thick] (8,2) rectangle (9,3);
\draw[thick] (9,2) rectangle (10,3);
\node[anchor=north west, scale=0.7] at (7,3) {$\mathrm{T}_{a_5, b_5}$};

\node[anchor=north west, scale=1.6] at (1,2.5) {$\mathcal{L}$};

\draw[->] (7.8,2.5) -- (9.2,2.5);

\node at (10.3,2.5) {...};

\end{tikzpicture}
\captionof{figure}{Path $\mathcal{L}$ of parallelepipeds $\mathrm{T}$. The path changes direction when visiting the parallelepipeds $\mathrm{T}_{a_i,b_i}$ } \label{fig:path_mathcal_L}
\end{figure}
    Assume in this subsection that $r_1=r_2$ for the parallelepipeds $\mathrm{T}_{s_1,r_1}$ and $\mathrm{T}_{s_2,r_2}$, and assume that $s_2>s_1$, so that $\mathrm{T}_{s_2,r_2}$ is to the right of $\mathrm{T}_{s_1,r_1}$. We will also throughout this subsection require $s_2-s_1$ to be odd, so that the parallelepipeds $\mathrm{T}_{s_1,r_1}$ and $\mathrm{T}_{s_2,r_2}$ are separated by an even number of parallelepipeds. We now construct a self-avoiding random path $\mathcal{L}_X$ (based on the straight path $\mathcal{L}$) of parallelepipeds between $\mathrm{T}_{s_1,r_1}$ and $\mathrm{T}_{s_2,r_2}$ as follows: 
    \begin{itemize}
    \item Let $X$ be a continuous random variable chosen uniformly in $[0,\frac{s_2-s_1+1}{2}]$, and let $z$ be the point in the left bottom corner of $\mathrm{T}_{\frac{s_2+s_1+1}{2},r_1}$. We define $y_X \in \Lambda_n$ to be $y_X := z+(0,Mr\lceil\alpha \log n\rceil X,0,...,0)$  (the $x$-coordinate of this point is the midpoint of $\mathrm{T}_{s_1,r_1}$ and $\mathrm{T}_{s_2,r_2}$, see Figure ~\ref{fig:middle_point_y_x}). 
    \end{itemize}
    Note that we assume here that $y_{X}$ is inside of $\Lambda_n$; if this were not the case we define $y'_{X}=z-(0,Mr\lceil \alpha \log n \rceil X,0,...,0)$, and the construction that follows is reflected vertically. Note that at least one of $y_X,y_X'$ belongs to $\Lambda_n$. We will thus w.l.o.g.\ follow our construction assuming that $y_X \in \Lambda_n$.
    
    We denote by $\mathcal{L}_X$ the a.s.\ unique random path of parallelepipeds intersecting the segment between the bottom left corner of $\mathrm{T}_{s_1,r_1}$ and the point $y_X$, and the segment between the bottom right corner of $\mathrm{T}_{s_2,r_2}$ and $y_X$ (see Figure~\ref{fig:random_path_of_supercubes}).
\begin{figure}[htb]
\begin{minipage}{0.5\textwidth}
     \begin{tikzpicture}[scale=0.8]

\def\cubex{1} 
\def\cubey{1} 
\def\cubez{1} 

\coordinate (A1) at (0, 0, 0);
\coordinate (B1) at (\cubex, 0, 0);
\coordinate (C1) at (\cubex, \cubey, 0);
\coordinate (D1) at (0, \cubey, 0);
\coordinate (E1) at (0, 0, -\cubez);
\coordinate (F1) at (\cubex, 0, -\cubez);
\coordinate (G1) at (\cubex, \cubey, -\cubez);
\coordinate (H1) at (0, \cubey, -\cubez);

\draw[thick] (A1) -- (B1) -- (C1) -- (D1) -- cycle;
\draw[thick] (A1) -- (E1) -- (H1) -- (D1) -- cycle;
\draw[thick] (E1) -- (F1) -- (G1) -- (H1) -- cycle;
\draw[thick] (B1) -- (F1);
\draw[thick] (C1) -- (G1);

\node[anchor=north west, scale=0.7] at (D1) {$T_{s_1, r_1}$};

\node[red, circle, fill=red, scale=0.5] at (A1) {};

\coordinate (A2) at (2.5, 0, 0);
\coordinate (B2) at (2.5+\cubex, 0, 0);
\coordinate (C2) at (2.5+\cubex, \cubey, 0);
\coordinate (D2) at (2.5, \cubey, 0);
\coordinate (E2) at (2.5, 0, -\cubez);
\coordinate (F2) at (2.5+\cubex, 0, -\cubez);
\coordinate (G2) at (2.5+\cubex, \cubey, -\cubez);
\coordinate (H2) at (2.5, \cubey, -\cubez);

\draw[thick] (A2) -- (B2) -- (C2) -- (D2) -- cycle;
\draw[thick] (A2) -- (E2) -- (H2) -- (D2) -- cycle;
\draw[thick] (E2) -- (F2) -- (G2) -- (H2) -- cycle;
\draw[thick] (B2) -- (F2);
\draw[thick] (C2) -- (G2);

\node[anchor=north west, scale=0.7] at (D2) {$T_{ \frac{s_2+s_1+1}{2},r_1}$};

\node[anchor=north west, scale=0.7] at (2.3,0) {$z$};

\node[red, circle, fill=red, scale=0.45] at (2.5,0) {};

\coordinate (A3) at (5, 0, 0);
\coordinate (B3) at (5+\cubex, 0, 0);
\coordinate (C3) at (5+\cubex, \cubey, 0);
\coordinate (D3) at (5, \cubey, 0);
\coordinate (E3) at (5, 0, -\cubez);
\coordinate (F3) at (5+\cubex, 0, -\cubez);
\coordinate (G3) at (5+\cubex, \cubey, -\cubez);
\coordinate (H3) at (5, \cubey, -\cubez);

\draw[thick] (A3) -- (B3) -- (C3) -- (D3) -- cycle;
\draw[thick] (A3) -- (E3) -- (H3) -- (D3) -- cycle;
\draw[thick] (E3) -- (F3) -- (G3) -- (H3) -- cycle;
\draw[thick] (B3) -- (F3);
\draw[thick] (C3) -- (G3);

\node[anchor=north west, scale=0.7] at (D3) {$T_{s_2, r_2}$};

\node[red, circle, fill=red, scale=0.5] at (B3) {};

\coordinate (top_right_edge) at (A2);
\coordinate (red_node) at (2.5, \cubey + 3.5, 0); 

\draw[dashed] (A2) -- (red_node);

\node[red, circle, fill=red, scale=0.5, label=above:{$y_X$}] at (red_node) {};

\draw[red, thick] (red_node) -- (A1); 
\draw[red, thick] (red_node) -- (B3); 

\node[scale=1.5] at (1.5, 0.5, 0) {$\cdot$};
\node[scale=1.5] at (1.75, 0.5, 0) {$\cdot$};
\node[scale=1.5] at (2, 0.5, 0) {$\cdot$};

\node[scale=1.5] at (4, 0.5, 0) {$\cdot$};
\node[scale=1.5] at (4.25, 0.5, 0) {$\cdot$};
\node[scale=1.5] at (4.5, 0.5, 0) {$\cdot$};
\end{tikzpicture}
\captionof{figure}{Position of $y_X$ relative to the parallelepipeds}
    \label{fig:middle_point_y_x}
\end{minipage}%
\begin{minipage}{0.5\textwidth}
\begin{tikzpicture}[scale=0.8]

\def\squarex{1} 
\def\squarey{1} 
\draw[draw=transparent, fill=red!30, thick] (0*\squarex, 0*\squarey) rectangle (1*\squarex, 1*\squarey); 
\draw[draw=transparent, fill=red!30, thick] (1*\squarex, 1*\squarey) rectangle (2*\squarex , 2*\squarey);
\draw[draw=transparent, fill=red!30, thick] (1*\squarex, 0*\squarey) rectangle (2*\squarex, 1*\squarey); 
\draw[draw=transparent, fill=red!30, thick] (2*\squarex, 1*\squarey) rectangle (3*\squarex, 2*\squarey); 
\draw[draw=transparent, fill=red!30, thick] (3*\squarex, 1*\squarey) rectangle (4*\squarex , 2*\squarey); 
\draw[draw=transparent, fill=red!30, thick] (3*\squarex, 2*\squarey) rectangle (4*\squarex , 3*\squarey);
 \draw[draw=transparent, fill=red!30, thick] (4*\squarex, 2*\squarey) rectangle (5*\squarex , 3*\squarey);
\draw[draw=transparent, fill=red!30, thick] (5*\squarex, 2*\squarey) rectangle (6*\squarex , 3*\squarey);
\draw[draw=transparent, fill=red!30, thick] (6*\squarex, 2*\squarey) rectangle (7*\squarex , 3*\squarey);
\draw[draw=transparent, fill=red!30, thick] (6*\squarex, 1*\squarey) rectangle (7*\squarex , 2*\squarey);
\draw[draw=transparent, fill=red!30, thick] (7*\squarex, 1*\squarey) rectangle (8*\squarex , 2*\squarey);
\draw[draw=transparent, fill=red!30, thick] (8*\squarex, 1*\squarey) rectangle (8*\squarex , 2*\squarey);
\draw[draw=transparent, fill=red!30, thick] (8*\squarex, 0*\squarey) rectangle (9*\squarex , 1*\squarey);
\draw[draw=transparent, fill=red!30, thick] (9*\squarex, 0*\squarey) rectangle (10*\squarex , 1*\squarey);
\draw[draw=transparent, fill=red!30, ultra thin] (8*\squarex, 1*\squarey) rectangle (9*\squarex , 2*\squarey);

\def\middle{4} 

\coordinate (start_x) at (0, 0);

\foreach \level in {0,...,4} {
    \foreach \i in {0,...,\level} {
        \draw[thick] (\level*\squarex, \i*\squarey) rectangle (\level*\squarex + \squarex, \i*\squarey + \squarey);
    }
}
\draw[thick] (5*\squarex, 0*\squarey) rectangle (5*\squarex + \squarex, 0*\squarey + \squarey);
\draw[thick] (5*\squarex, 1*\squarey) rectangle (5*\squarex + \squarex, 1*\squarey + \squarey);
\draw[thick] (5*\squarex, 2*\squarey) rectangle (5*\squarex + \squarex, 2*\squarey + \squarey);
\draw[thick] (5*\squarex, 3*\squarey) rectangle (5*\squarex + \squarex, 3*\squarey + \squarey);
\draw[thick] (5*\squarex, 4*\squarey) rectangle (5*\squarex + \squarex, 4*\squarey + \squarey);

\draw[thick] (6*\squarex, 0*\squarey) rectangle (6*\squarex + \squarex, 0*\squarey + \squarey);
\draw[thick] (6*\squarex, 1*\squarey) rectangle (6*\squarex + \squarex, 1*\squarey + \squarey);
\draw[thick] (6*\squarex, 2*\squarey) rectangle (6*\squarex + \squarex, 2*\squarey + \squarey);
\draw[thick] (6*\squarex, 3*\squarey) rectangle (6*\squarex + \squarex, 3*\squarey + \squarey);

\draw[thick] (7*\squarex, 0*\squarey) rectangle (7*\squarex + \squarex, 0*\squarey + \squarey);
\draw[thick] (7*\squarex, 1*\squarey) rectangle (7*\squarex + \squarex, 1*\squarey + \squarey);
\draw[thick] (7*\squarex, 2*\squarey) rectangle (7*\squarex + \squarex, 2*\squarey + \squarey);

\draw[thick] (8*\squarex, 0*\squarey) rectangle (8*\squarex + \squarex, 0*\squarey + \squarey);
\draw[thick] (8*\squarex, 1*\squarey) rectangle (8*\squarex + \squarex, 1*\squarey + \squarey);

\draw[thick] (9*\squarex, 0*\squarey) rectangle (9*\squarex + \squarex, 0*\squarey + \squarey);

\coordinate (A1) at (0, 0);
\coordinate (B1) at (\squarex, 0);
\coordinate (C1) at (\squarex, \squarey);
\coordinate (D1) at (0, \squarey);
\node[anchor=north west, scale=0.5] at (D1) {$T_{s_1, r_1}$};
\coordinate (A2) at (4*\squarex, 0);
\coordinate (B2) at (4*\squarex + \squarex, 0);
\coordinate (C2) at (4*\squarex + \squarex, \squarey);
\coordinate (D2) at (4*\squarex, \squarey);
\node[anchor=north west, scale=0.5] at (D2) {$T_{ \frac{s_2+s_1-1}{2},r_1}$};
\coordinate (A3) at (8*\squarex, 0);
\coordinate (B3) at (9*\squarex + \squarex, 0);
\coordinate (C3) at (8*\squarex + \squarex, \squarey);
\coordinate (D3) at (9*\squarex, \squarey);

\node[anchor=north west, scale=0.5] at (D3) {$T_{s_2, r_2}$};

\coordinate (red_node) at (5*\squarex, 2.8);
\node[red, circle, fill=red, scale=0.5, label=above:{$y_X$}] at (red_node) {}; 

\draw[red, thick] (red_node) -- (A1); 
\draw[red, thick] (red_node) -- (B3); 


\end{tikzpicture}
    \captionof{figure}{The red cubes form the path $\mathcal{L}_X$}
    \label{fig:random_path_of_supercubes}
\end{minipage}
\end{figure}
    Let $\mathrm{C_i} \in \mathrm{T}_{s_1,r_1}$ and $\mathrm{C_j} \in \mathrm{T}_{s_2,r_2}$ be any good cubes such that both of them belong to some vertical crossings of the strips $\mathrm{V}_{s_1}$ and $\mathrm{V}_{s_2}$, respectively (see the leftmost part of Figure~\ref{fig:paths_from_c_i} for an example of such crossings for $\mathrm{T}_{s_1,r_1}$). Either such crossings have length $O(\log n)$, or otherwise, by Theorem ~\ref{them:chemical_distance_percolation}, a.a.s. we may select crossings with this property. We denote the selected vertical crossings by $\mathrm{V}(\mathrm{C_i})$ and $\mathrm{V}(\mathrm{C_j})$. From the vertical crossing of good cubes $\mathrm{V}(\mathrm{C_i})$ that contains $\mathrm{C_i}$ we then extract a bottom-to-top crossing of $\mathrm{T}_{s_1,r_1}$ made from good cubes which we denote by $\mathrm{V}'(\mathrm{C_i})$ (see yellow cubes in the middle picture of Figure~\ref{fig:paths_from_c_i}). Similarly, we extract a bottom-to-top-crossing of $\mathrm{T}_{s_2,r_2}$ made from good cubes, from the vertical crossing $\mathrm{V}(\mathrm{C_j})$ that contains $\mathrm{C_j}$, and we denote it by $\mathrm{V}{'} (\mathrm{C_j})$. 
    
    Now, given $1\leq \ell \leq \lfloor c\log n \rfloor $, our goal is to construct a self-avoiding path of good cubes between $\mathrm{C_i}$ and $\mathrm{C_j}$ that is entirely contained within $\mathcal{L}_X$ and has index $\ell$.
    
    Since $\mathrm{V}{'}(\mathrm{C_i})$ is self-avoiding, we may split it into two paths of good cubes with intersection only at $\mathrm{C_i}$: these paths start at $\mathrm{C_i}$ and go to the left and down, following the induced orientation, see the rightmost picture of Figure~\ref{fig:paths_from_c_i}. Denote the paths by $\mathrm{V}_1'(\mathrm{C_i})$ and $\mathrm{V}_2'(\mathrm{C_i})$ and do the same for $\mathrm{V}'(\mathrm{C_j})$. At least one of these paths of good cubes that start at $\mathrm{C_i}$ intersects $\mathrm{H}_{r_1}^{\ell}$, since the bottom-to-top crossing (of $\mathrm{T}_{s_1,r_1}$) $\mathrm{V}'(\mathrm{C_i})$ made from good cubes intersects the horizontal crossing of good cubes (of the strip $\mathrm{H}_{r_1}$) $\mathrm{H}^{\ell}_{r_1}$ in at least one good cube. Suppose without loss of generality that $\mathrm{V}_1'(\mathrm{C_i})$ intersects this horizontal crossing. Concatenate then $\mathrm{V}_1'(\mathrm{C_i})$ with $\mathrm{H}^{\ell}_{r_1}$ at the last good cube of $\mathrm{H}^{\ell}_{r_1}$ that intersects $\mathrm{V}_1'(\mathrm{C_i})$; we denote the resulting path by $\mathcal{Q}_1$ and note that it is non-intersecting (see  Figure~\ref{fig:concatenation_of_paths}, the bottom picture shows the final $\mathcal{Q}_1$).
    Now, observe that $(a_3,b_3)-(a_2,b_2)$ must be vertically oriented. We concatenate $\mathcal{Q}_1$ to the vertical crossing $\mathrm{V}_{a_3}^\ell$ (seen as path that goes from bottom to top) at the last cube of $\mathrm{V}_{a_3}^{\ell}$ that intersects $\mathcal{Q}_1$; we denote this path by $\mathcal{Q}_2$.

\begin{figure}[htb]
\begin{minipage}{0.49\textwidth}
\begin{tikzpicture}[scale=0.8]

\def\squarex{1.5} 
\def\squarey{1.5} 
\def\squarexx{1} 
\def\squareyy{1} 

\draw[thick] (6*\squarex, 0*\squarey) rectangle (6*\squarex + \squarex, 0*\squarey + \squarey);
\draw[thick] (6*\squarex, 1*\squarey) rectangle (6*\squarex + \squarex, 1*\squarey + \squarey);
\draw[thick] (6*\squarex, -1*\squarey) rectangle (6*\squarex + \squarex, -1*\squarey + \squarey);
\draw[thick] (6*\squarex, -2*\squarey) rectangle (6*\squarex + \squarex, -2*\squarey + \squarey);
\draw[thick] (6*\squarex, 2*\squarey) rectangle (6*\squarex + \squarex, 2*\squarey + \squarey);

\foreach \y in {2.2, 2.25, 2.3, ..., 3} {
    \draw[draw=black, ultra thin] 
        (6.1*\squarex, \y*\squarey) rectangle (6.15*\squarex, \y*\squarey + 0.05*\squarey); 
}

\foreach \y in {6.15, 6.2, 6.25, ..., 6.65} {
    \draw[draw=black, ultra thin] 
        (\y*\squarex, 2.2*\squarey) rectangle (\y*\squarex+0.05*\squarex, 2.25*\squarey); 
}

\foreach \y in {0.9, 0.95, 1, ..., 2.25} {
    \draw[draw=black, ultra thin] 
        (6.65*\squarex, \y*\squarey) rectangle (6.7*\squarex, \y*\squarey + 0.05*\squarey); 
}

\foreach \y in {6.45, 6.45, 6.5, ..., 6.65} {
    \draw[draw=black, ultra thin] 
        (\y*\squarex, 0.9*\squarey) rectangle (\y*\squarex+0.05*\squarex, 0.95*\squarey); 
}

\foreach \y in {0.3, 0.35, 0.4, ..., 0.9} {
    \draw[draw=black, ultra thin] 
        (6.45*\squarex, \y*\squarey) rectangle (6.5*\squarex, \y*\squarey + 0.05*\squarey); 
}

\draw[draw=black,fill=red, ultra thin] 
        (6.45*\squarex, 0.5*\squarey) rectangle (6.5*\squarex, 0.5*\squarey + 0.05*\squarey);

\foreach \y in {6.2, 6.25, 6.3, ..., 6.45} {
    \draw[draw=black, ultra thin] 
        (\y*\squarex, 0.3*\squarey) rectangle (\y*\squarex+0.05*\squarex, 0.35*\squarey); 
}

\foreach \y in {-0.2, -0.15, -0.1, ..., 0.3} {
    \draw[draw=black, ultra thin] 
        (6.2*\squarex, \y*\squarey) rectangle (6.25*\squarex, \y*\squarey + 0.05*\squarey); 
}

\draw[draw=black, ultra thin] 
        (6.25*\squarex, -0.2*\squarey) rectangle (6.3*\squarex, -0.2*\squarey + 0.05*\squarey); 
\draw[draw=black, ultra thin] 
        (6.3*\squarex, -0.2*\squarey) rectangle (6.35*\squarex, -0.2*\squarey + 0.05*\squarey); 

\foreach \y in {-1.3, -1.25, -1.2, ..., -0.2} {
    \draw[draw=black, ultra thin] 
        (6.3*\squarex, \y*\squarey) rectangle (6.35*\squarex, \y*\squarey + 0.05*\squarey); 
}

\foreach \y in {6.3, 6.35, 6.4, ..., 6.85} {
    \draw[draw=black, ultra thin] 
        (\y*\squarex, -1.3*\squarey) rectangle (\y*\squarex+0.05*\squarex, -1.35*\squarey); 
}

\foreach \y in {-2, -1.95, -1.9, ..., -1.35} {
    \draw[draw=black, ultra thin] 
        (6.85*\squarex, \y*\squarey) rectangle (6.9*\squarex, \y*\squarey + 0.05*\squarey); 
}

\node at (6*\squarex + 0.5*\squarex, 3.5*\squarey) {\vdots};
\node at (6*\squarex + 0.5*\squarex, -2.5*\squarey) {\vdots};
\node[anchor=north east, scale=0.6] at (6.5*\squarex, 0*\squarey + \squarey) {\( \mathrm{T}_{s_1, r_1} \)};

\draw[thick] (8*\squarex, 0*\squarey) rectangle (8*\squarex + \squarex, 0*\squarey + \squarey);
\draw[thick] (8*\squarex, 1*\squarey) rectangle (8*\squarex + \squarex, 1*\squarey + \squarey);
\draw[thick] (8*\squarex, -1*\squarey) rectangle (8*\squarex + \squarex, -1*\squarey + \squarey);
\draw[thick] (8*\squarex, -2*\squarey) rectangle (8*\squarex + \squarex, -2*\squarey + \squarey);
\draw[thick] (8*\squarex, 2*\squarey) rectangle (8*\squarex + \squarex, 2*\squarey + \squarey);

\foreach \y in {2.2, 2.25, 2.3, ..., 3} {
    \draw[draw=black, ultra thin] 
        (8.1*\squarex, \y*\squarey) rectangle (8.15*\squarex, \y*\squarey + 0.05*\squarey);
}

\foreach \y in {8.15, 8.2, 8.25, ..., 8.65} {
    \draw[draw=black, ultra thin] 
        (\y*\squarex, 2.2*\squarey) rectangle (\y*\squarex+0.05*\squarex, 2.25*\squarey);
}

\foreach \y in {0.9, 0.95, 1, ..., 2.25} {
    \draw[draw=black,ultra thin] 
        (8.65*\squarex, \y*\squarey) rectangle (8.7*\squarex, \y*\squarey + 0.05*\squarey);
}

\draw[draw=black, fill=yellow,ultra thin] 
        (8.65*\squarex, 0.9*\squarey) rectangle (8.65*\squarex+0.05*\squarex, 0.95*\squarey);

\draw[draw=black, fill=yellow,ultra thin] 
        (8.65*\squarex, 0.95*\squarey) rectangle (8.65*\squarex+0.05*\squarex, 1*\squarey);

\foreach \y in {8.45, 8.5, 8.55, ..., 8.65} {
    \draw[draw=black, fill=yellow,ultra thin] 
        (\y*\squarex, 0.9*\squarey) rectangle (\y*\squarex+0.05*\squarex, 0.95*\squarey);
}

\foreach \y in {0.3, 0.35, 0.4, ..., 0.9} {
    \draw[draw=black, fill=yellow, ultra thin] 
        (8.45*\squarex, \y*\squarey) rectangle (8.5*\squarex, \y*\squarey + 0.05*\squarey);
}

\draw[draw=black, fill=red, ultra thin] 
        (8.45*\squarex, 0.5*\squarey) rectangle (8.5*\squarex, 0.5*\squarey + 0.05*\squarey);

\foreach \y in {8.2, 8.25, 8.3, ..., 8.45} {
    \draw[draw=black,fill=yellow ,ultra thin] 
        (\y*\squarex, 0.3*\squarey) rectangle (\y*\squarex+0.05*\squarex, 0.35*\squarey);
}

\foreach \y in {0, 0.05, 0.1, ..., 0.3} {
    \draw[draw=black,fill=yellow, ultra thin] 
        (8.2*\squarex, \y*\squarey) rectangle (8.25*\squarex, \y*\squarey + 0.05*\squarey);
}

\foreach \y in {-0.2, -0.15, -0.1, ..., 0} {
    \draw[draw=black, ultra thin] 
        (8.2*\squarex, \y*\squarey) rectangle (8.25*\squarex, \y*\squarey + 0.05*\squarey);
}

\draw[draw=black, ultra thin] 
        (8.25*\squarex, -0.2*\squarey) rectangle (8.3*\squarex, -0.2*\squarey + 0.05*\squarey);
\draw[draw=black, ultra thin] 
        (8.3*\squarex, -0.2*\squarey) rectangle (8.35*\squarex, -0.2*\squarey + 0.05*\squarey);

\foreach \y in {-1.3, -1.25, -1.2, ..., -0.2} {
    \draw[draw=black, ultra thin] 
        (8.3*\squarex, \y*\squarey) rectangle (8.35*\squarex, \y*\squarey + 0.05*\squarey);
}

\foreach \y in {8.3, 8.35, 8.4, ..., 8.85} {
    \draw[draw=black, ultra thin] 
        (\y*\squarex, -1.3*\squarey) rectangle (\y*\squarex+0.05*\squarex, -1.35*\squarey);
}

\foreach \y in {-2, -1.95, -1.9, ..., -1.35} {
    \draw[draw=black, ultra thin] 
        (8.85*\squarex, \y*\squarey) rectangle (8.9*\squarex, \y*\squarey + 0.05*\squarey);
}

\node at (8*\squarex + 0.5*\squarex, 3.5*\squarey) {\vdots};
\node at (8*\squarex + 0.5*\squarex, -2.5*\squarey) {\vdots};

\node[anchor=north east, scale=0.6] at (8.5*\squarex, 0*\squarey + \squarey) {\( \mathrm{T}_{s_1, r_1} \)};

\draw[thick] (10*\squarex, -0.5*\squarey) rectangle (10*\squarex + 2*\squarex, 0*\squarey + 1.5*\squarey);

\draw[draw=black, fill=blue!20,ultra thin] 
        (11.3*\squarex, 1.4*\squarey) rectangle (11.3*\squarex+0.1*\squarex, 1.5*\squarey);

\draw[draw=black, fill=blue!20,ultra thin] 
        (11.3*\squarex, 1.3*\squarey) rectangle (11.3*\squarex+0.1*\squarex, 1.4*\squarey);

\foreach \y in {10.9, 11, 11.1, ..., 11.3} {
    \draw[draw=black, fill=blue!20,ultra thin] 
        (\y*\squarex, 1.3*\squarey) rectangle (\y*\squarex+0.1*\squarex, 1.4*\squarey);
}

\foreach \y in {0.5, 0.6, 0.7, ..., 1.3} {
    \draw[draw=black, fill=blue!20, ultra thin] 
        (10.9*\squarex, \y*\squarey) rectangle (11*\squarex, \y*\squarey + 0.1*\squarey);
}

\draw[draw=black, fill=red, ultra thin] 
        (10.9*\squarex, 0.5*\squarey) rectangle (11*\squarex, 0.5*\squarey + 0.1*\squarey);

\foreach \y in {0.1, 0.2, 0.3, ..., 0.5} {
    \draw[draw=black, fill=green!20, ultra thin] 
        (10.9*\squarex, \y*\squarey) rectangle (11*\squarex, \y*\squarey + 0.1*\squarey);
}

\foreach \y in {10.4,10.5 , 10.6, ..., 10.9} {
    \draw[draw=black,fill=green!20 ,ultra thin] 
        (\y*\squarex, 0.1*\squarey) rectangle (\y*\squarex+0.1*\squarex, 0.2*\squarey);
}

\foreach \y in {-0.5, -0.4, -0.3, ..., 0} {
    \draw[draw=black,fill=green!20, ultra thin] 
        (10.4*\squarex, \y*\squarey) rectangle (10.5*\squarex, \y*\squarey + 0.1*\squarey);
}

\node at (11*\squarex + 0*\squarex, 2*\squarey) {\vdots};
\node at (11*\squarex + 0*\squarex, -0.7*\squarey) {\vdots};

\node[anchor=north east, scale=0.6] at (10.55*\squarex, 1.5*\squarey) {\( \mathrm{T}_{s_1, r_1} \)};

\draw[draw=black, fill=red, ultra thin] 
        (10.5*\squarex, -2*\squarey) rectangle (10.6*\squarex, -1.9*\squarey);
\node[anchor=west] at (10.6*\squarex, -1.95*\squarey) {\( = \mathrm{C_i} \)};

\end{tikzpicture}
    \captionof{figure}{The path of smaller cubes is $\mathrm{V}(\mathrm{C_i})$. The path in yellow is $\mathrm{V}'(\mathrm{C_i})$. In green and blue the two paths $\mathrm{V}'_1(\mathrm{C_i})$ and $\mathrm{V}'_2(\mathrm{C_i})$, respectively, obtained from $\mathrm{V}'(\mathrm{C_i})$ that start from $\mathrm{C_i}$.}
    \label{fig:paths_from_c_i}
\end{minipage}%
\begin{minipage}{0.49\textwidth}
\begin{center}
\begin{tikzpicture}[scale=0.8]
\def\squarex{1.5} 
\def\squarey{1.5}
\draw[thick] (10*\squarex, -0.5*\squarey) rectangle (10*\squarex + 2*\squarex, 0*\squarey + 1.5*\squarey);

\draw[thick] (12*\squarex, -0.5*\squarey) rectangle (12*\squarex + 2*\squarex, 0*\squarey + 1.5*\squarey);

\draw[draw=black, fill=blue!20,ultra thin] 
        (11.3*\squarex, 1.4*\squarey) rectangle (11.3*\squarex+0.1*\squarex, 1.5*\squarey);

\draw[draw=black, fill=blue!20,ultra thin] 
        (11.3*\squarex, 1.3*\squarey) rectangle (11.3*\squarex+0.1*\squarex, 1.4*\squarey);

\foreach \y in {10.9, 11, 11.1, ..., 11.3} {
    \draw[draw=black, fill=blue!20,ultra thin] 
        (\y*\squarex, 1.3*\squarey) rectangle (\y*\squarex+0.1*\squarex, 1.4*\squarey);
}

\foreach \y in {0.5, 0.6, 0.7, ..., 1.3} {
    \draw[draw=black, fill=blue!20, ultra thin] 
        (10.9*\squarex, \y*\squarey) rectangle (11*\squarex, \y*\squarey + 0.1*\squarey);
}

\draw[draw=black, fill=red, ultra thin] 
        (10.9*\squarex, 0.5*\squarey) rectangle (11*\squarex, 0.5*\squarey + 0.1*\squarey);

\foreach \y in {0.1, 0.2, 0.3, ..., 0.5} {
    \draw[draw=black, fill=green!20, ultra thin] 
        (10.9*\squarex, \y*\squarey) rectangle (11*\squarex, \y*\squarey + 0.1*\squarey);
}

\foreach \y in {10.4,10.5 , 10.6, ..., 10.9} {
    \draw[draw=black,fill=green!20 ,ultra thin] 
        (\y*\squarex, 0.1*\squarey) rectangle (\y*\squarex+0.1*\squarex, 0.2*\squarey);
}

\foreach \y in {-0.5, -0.4, -0.3, ..., 0} {
    \draw[draw=black,fill=green!20, ultra thin] 
        (10.4*\squarex, \y*\squarey) rectangle (10.5*\squarex, \y*\squarey + 0.1*\squarey);
}

\foreach \y in {-0.5, -0.4, -0.3, ..., 0} {
    \draw[draw=black,fill=green!20, ultra thin] 
        (10.4*\squarex, \y*\squarey) rectangle (10.5*\squarex, \y*\squarey + 0.1*\squarey);
}

\foreach \y in {9.5, 9.6, 9.7, ..., 10.7} {
    \draw[draw=black,fill=black!20, ultra thin] 
        (\y*\squarex, 0.3*\squarey) rectangle (\y*\squarex + 0.1*\squarey, 0.3*\squarey + 0.1*\squarey);
}

\foreach \y in {-0.3, -0.2, -0.1, ..., 0.3} {
    \draw[draw=black,fill=black!20, ultra thin] 
        (10.7*\squarex, \y*\squarey) rectangle (10.7*\squarex + 0.1*\squarey, \y*\squarey + 0.1*\squarey);
}

\foreach \y in {10.7, 10.8, 10.9, ..., 12.3} {
    \draw[draw=black,fill=black!20, ultra thin] 
        (\y*\squarex, -0.3*\squarey) rectangle (\y*\squarex + 0.1*\squarey, -0.3*\squarey + 0.1*\squarey);
}

\foreach \y in {10.7, 10.8, 10.9, ..., 12.3} {
    \draw[draw=black,fill=black!20, ultra thin] 
        (\y*\squarex, -0.3*\squarey) rectangle (\y*\squarex + 0.1*\squarey, -0.3*\squarey + 0.1*\squarey);
}

\foreach \y in {-0.3, -0.2, -0.1, ..., 0.8} {
    \draw[draw=black,fill=black!20, ultra thin] 
        (12.3*\squarex, \y*\squarey) rectangle (12.3*\squarex + 0.1*\squarey, \y*\squarey + 0.1*\squarey);
}

\foreach \y in {12.3, 12.4, 12.5, ..., 13.4} {
    \draw[draw=black,fill=black!20, ultra thin] 
        (\y*\squarex, 0.8*\squarey) rectangle (\y*\squarex + 0.1*\squarey, 0.8*\squarey + 0.1*\squarey);
}

\foreach \y in {0.4, 0.5, 0.6, ..., 0.9} {
    \draw[draw=black,fill=black!20, ultra thin] 
        (13.4*\squarex, \y*\squarey) rectangle (13.4*\squarex + 0.1*\squarey, \y*\squarey + 0.1*\squarey);
}

\foreach \y in {13.4, 13.5, 13.6, ..., 14.2} {
    \draw[draw=black,fill=black!20, ultra thin] 
        (\y*\squarex, 0.4*\squarey) rectangle (\y*\squarex + 0.1*\squarey, 0.4*\squarey + 0.1*\squarey);
}

\draw[thick] (10*\squarex, -3.5*\squarey) rectangle (10*\squarex + 2*\squarex, -3*\squarey + 1.5*\squarey);

\draw[thick] (12*\squarex, -3.5*\squarey) rectangle (12*\squarex + 2*\squarex, -3*\squarey + 1.5*\squarey);

\draw[draw=black, fill=red, ultra thin] 
        (10.9*\squarex, -2.5*\squarey) rectangle (11*\squarex, -2.5*\squarey + 0.1*\squarey);

\foreach \y in {-2.9, -2.8, -2.7, ..., -2.5} {
    \draw[draw=black, fill=green!20, ultra thin] 
        (10.9*\squarex, \y*\squarey) rectangle (11*\squarex, \y*\squarey + 0.1*\squarey);
}

\foreach \y in {10.7, 10.8,10.9} {
    \draw[draw=black, fill=green!20, ultra thin] 
        (\y*\squarex, -2.9*\squarey) rectangle (\y*\squarex+0.1*\squarex, -2.8*\squarey);
}

\foreach \y in {-3.3, -3.2, -3.1,-3,-2.9} {
    \draw[draw=black, fill=black!20, ultra thin] 
        (10.7*\squarex, \y*\squarey) rectangle (10.7*\squarex + 0.1*\squarex, \y*\squarey + 0.1*\squarey);
}

\foreach \y in {10.7, 10.8, 10.9, ..., 12.3} {
    \draw[draw=black, fill=black!20, ultra thin] 
        (\y*\squarex, -3.3*\squarey) rectangle (\y*\squarex + 0.1*\squarex, -3.2*\squarey);
}

\foreach \y in {-3.3, -3.2, -3.1, ..., -2.2} {
    \draw[draw=black, fill=black!20, ultra thin] 
        (12.3*\squarex, \y*\squarey) rectangle (12.3*\squarex + 0.1*\squarex, \y*\squarey + 0.1*\squarey);
}

\foreach \y in {12.3, 12.4, 12.5, ..., 13.4} {
    \draw[draw=black, fill=black!20, ultra thin] 
        (\y*\squarex, -2.2*\squarey) rectangle (\y*\squarex + 0.1*\squarex, -2.2*\squarey + 0.1*\squarey);
}

\foreach \y in {-2.6, -2.5, -2.4, ..., -2.1} {
    \draw[draw=black, fill=black!20, ultra thin] 
        (13.4*\squarex, \y*\squarey) rectangle (13.4*\squarex + 0.1*\squarex, \y*\squarey + 0.1*\squarey);
}

\foreach \y in {13.4, 13.5, 13.6, ..., 14.2} {
    \draw[draw=black, fill=black!20, ultra thin] 
        (\y*\squarex, -2.6*\squarey) rectangle (\y*\squarex + 0.1*\squarex, -2.6*\squarey + 0.1*\squarey);
}

\node[anchor=north east, scale=0.6] at (10.55*\squarex, 1.5*\squarey) {\( \mathrm{T}_{s_1, r_1} \)};
\node[anchor=north east, scale=0.6] at (10.55*\squarex, -3*\squarey) {\( \mathrm{T}_{s_1, r_1} \)};

\node at (14.5*\squarex + 0*\squarex, 0.7*\squarey) {\dots};
\node at (14.5*\squarex + 0*\squarex, -2.5*\squarey) {\dots};
\end{tikzpicture}
\end{center}
\captionof{figure}{The gray path is $\mathrm{H}_{r_1}^{\ell}$. $\mathcal{Q}_1$ is the path that is left after concatenation in the picture below, adding the green and gray parts.}
    \label{fig:concatenation_of_paths}
\end{minipage}
\end{figure}  
    We continue to concatenate in this way for each pair $(a_i,b_i),(a_{i+1},b_{i+1})$, for $i\le i_0$, where $i_0$ is the largest integer $i > 1$ for which $a_i<\frac{s_2+s_1+1}{2}$. Intuitively, $i_0$ is the largest integer $i > 1$ for which $(a_i,b_i)$ still did not cross the \textit{middle} of the red path. We remark that the above definition fails when $\mathcal L_X$ is a straight path between $T_{s_1,r_1}$ and $T_{s_2,r_2}$, but in this case we stop the iteration step after $\mathcal Q_1$.

    We now continue with the process for $i\ge i_0$.
    Here we continue interpreting the horizontal crossings as paths from left to right but change the orientation of the vertical crossings of good cubes to be from top to bottom. Furthermore, now instead of taking the $\ell$-th vertical crossing of good cubes we take the $(\lceil c\log n  \rceil-\ell)$-th vertical crossing of good cubes to create the concatenations (the way of enumerating here is different so that the concatenations are disjoint for different $\ell$ if we delete the good cubes from $\mathrm{V}'(\mathrm{C_i}) \cup \mathrm{V}'(\mathrm{C_j})$, see Figure ~\ref{fig:paths_of_random_cubes}). To finish the construction, once we arrive at $\mathrm{T}_{s_2,r_2}$, what is left is the concatenation to the vertical crossing of good cubes $\mathrm{V}'(\mathrm{C_j})$. Here we proceed just as we did for $\mathrm{C_i}$. Select one path that starts from $\mathrm{C_j}$ and is made from good cubes of $\mathrm{V}'(\mathrm{C_j})$ and intersects $\mathcal{Q}_{k-1}$ (we consider the last concatenation available and that passes through $\mathrm{T}_{s_2,r_2}$; there exists at least one such path of good cubes, and the last part of the path comes from a horizontal crossing). Concatenate $\mathcal{Q}_{k-1}$ to the last path selected to obtain $\mathcal{Q}_{k}$. Note that the resulting path is self-avoiding by construction, and also note that for different $\ell$, the paths are disjoint if we delete the initial and final good cubes that come from $\mathrm{V}'(\mathrm{C_i})$ and $\mathrm{V}'(\mathrm{C_j})$. We denote $\mathcal{Q}_k$ by $\mathcal{Q}_X^{\ell}$. Delete from $\mathcal{Q}_X^\ell$ the good cubes that belong to $\mathrm{V}'(\mathrm{C_i})$ and $\mathrm{V}'(\mathrm{C_j})$, except for the cubes where the concatenations with $\mathrm{V}'(\mathrm{C_i})$ and $\mathrm{V}'(\mathrm{C_j})$ were made at the beginning and the end, and denote the resulting random path by $\mathcal{L}^{\ell}_X$ (see Figure ~\ref{fig:paths_of_random_cubes}). In particular, note that the path $\mathcal{L}_X^{\ell}$ is a subpath of $\mathcal{Q}_X^{\ell}$ since the latter has $\mathrm{C_i}$ and $\mathrm{C_j}$ as endpoints (in Figure ~\ref{fig:paths_of_random_cubes}, $\mathcal{Q}_X^{\ell}$ corresponds to the blue path together with the green paths in Figure ~\ref{fig:paths_of_random_cubes} while the path $\mathcal{L}_X^{\ell}$ is only the blue path). We point out that the construction of $\mathcal{Q}_X^{\ell}$ will be used only in the proof of the upper bound of the cover time above the connectivity threshold in Section~\ref{sec:upper_bound_covTime_rgg_nonFixed_r} and Section ~\ref{sec:upper_bound_above_connectivity}. 

\begin{figure}
\begin{center}
\begin{tikzpicture}[scale=1.3]

\def\squarex{1} 
\def\squarey{1} 
draw a red line below the blue line that goes through the same filled squares as the blue line
\def\middle{4} 
\draw[draw=transparent, fill=black!15, thick] (0*\squarex, 0*\squarey) rectangle (1*\squarex, 1*\squarey); 
\draw[draw=transparent, fill=black!15, thick] (1*\squarex, 1*\squarey) rectangle (2*\squarex , 2*\squarey);
\draw[draw=transparent, fill=black!15, thick] (1*\squarex, 0*\squarey) rectangle (2*\squarex, 1*\squarey); 
\draw[draw=transparent, fill=black!15, thick] (2*\squarex, 1*\squarey) rectangle (3*\squarex, 2*\squarey); 
\draw[draw=transparent, fill=black!15, thick] (3*\squarex, 1*\squarey) rectangle (4*\squarex , 2*\squarey); 
\draw[draw=transparent, fill=black!15, thick] (3*\squarex, 2*\squarey) rectangle (4*\squarex , 3*\squarey);
 \draw[draw=transparent, fill=black!15, thick] (4*\squarex, 2*\squarey) rectangle (5*\squarex , 3*\squarey);
\draw[draw=transparent, fill=black!15, thick] (5*\squarex, 2*\squarey) rectangle (6*\squarex , 3*\squarey);
\draw[draw=transparent, fill=black!15, thick] (6*\squarex, 2*\squarey) rectangle (7*\squarex , 3*\squarey);
\draw[draw=transparent, fill=black!15, thick] (6*\squarex, 1*\squarey) rectangle (7*\squarex , 2*\squarey);
\draw[draw=transparent, fill=black!15, thick] (7*\squarex, 1*\squarey) rectangle (8*\squarex , 2*\squarey);
\draw[draw=transparent, fill=black!15, thick] (8*\squarex, 1*\squarey) rectangle (8*\squarex , 2*\squarey);
\draw[draw=transparent, fill=black!15, thick] (8*\squarex, 0*\squarey) rectangle (9*\squarex , 1*\squarey);
\draw[draw=transparent, fill=black!15, thick] (9*\squarex, 0*\squarey) rectangle (10*\squarex , 1*\squarey);
\draw[draw=transparent, fill=black!15, thick] (8*\squarex, 1*\squarey) rectangle (9*\squarex , 2*\squarey);
\coordinate (start_x) at (0, 0);

\foreach \level in {0,...,4} {
    \foreach \i in {0,...,\level} {
        \draw[thick] (\level*\squarex, \i*\squarey) rectangle (\level*\squarex + \squarex, \i*\squarey + \squarey);
    }
}
\draw[thick] (5*\squarex, 0*\squarey) rectangle (5*\squarex + \squarex, 0*\squarey + \squarey);
\draw[thick] (5*\squarex, 1*\squarey) rectangle (5*\squarex + \squarex, 1*\squarey + \squarey);
\draw[thick] (5*\squarex, 2*\squarey) rectangle (5*\squarex + \squarex, 2*\squarey + \squarey);
\draw[thick] (5*\squarex, 3*\squarey) rectangle (5*\squarex + \squarex, 3*\squarey + \squarey);
\draw[thick] (5*\squarex, 4*\squarey) rectangle (5*\squarex + \squarex, 4*\squarey + \squarey);

\draw[thick] (6*\squarex, 0*\squarey) rectangle (6*\squarex + \squarex, 0*\squarey + \squarey);
\draw[thick] (6*\squarex, 1*\squarey) rectangle (6*\squarex + \squarex, 1*\squarey + \squarey);
\draw[thick] (6*\squarex, 2*\squarey) rectangle (6*\squarex + \squarex, 2*\squarey + \squarey);
\draw[thick] (6*\squarex, 3*\squarey) rectangle (6*\squarex + \squarex, 3*\squarey + \squarey);

\draw[thick] (7*\squarex, 0*\squarey) rectangle (7*\squarex + \squarex, 0*\squarey + \squarey);
\draw[thick] (7*\squarex, 1*\squarey) rectangle (7*\squarex + \squarex, 1*\squarey + \squarey);
\draw[thick] (7*\squarex, 2*\squarey) rectangle (7*\squarex + \squarex, 2*\squarey + \squarey);

\draw[thick] (8*\squarex, 0*\squarey) rectangle (8*\squarex + \squarex, 0*\squarey + \squarey);
\draw[thick] (8*\squarex, 1*\squarey) rectangle (8*\squarex + \squarex, 1*\squarey + \squarey);

\draw[thick] (9*\squarex, 0*\squarey) rectangle (9*\squarex + \squarex, 0*\squarey + \squarey);

\coordinate (A1) at (0, 0);
\coordinate (B1) at (\squarex, 0);
\coordinate (C1) at (\squarex, \squarey);
\coordinate (D1) at (0, \squarey);
\node[anchor=north west, scale=0.8] at (D1) {$\mathrm{T}_{s_1,r_1}$};
\coordinate (A2) at (4*\squarex, 0);
\coordinate (B2) at (4*\squarex + \squarex, 0);
\coordinate (C2) at (4*\squarex + \squarex, \squarey);
\coordinate (D2) at (4*\squarex, \squarey);
\node[anchor=north west, scale=0.8] at (D2) {$T_{s_1, \frac{r_2+r_1+1}{2}}$};
\coordinate (A3) at (8*\squarex, 0);
\coordinate (B3) at (9*\squarex + \squarex, 0);
\coordinate (C3) at (8*\squarex + \squarex, \squarey);
\coordinate (D3) at (9*\squarex, \squarey);

\node[anchor=north west, scale=0.8] at (D3) {$\mathrm{T}_{s_2,r_2}$};



\draw[green, ultra thick] (0.5,0.2) -- (0.5,0.7);
\draw[blue, ultra thick] (0.5,0.7) -- (1.2,0.7);
\draw[blue, ultra thick] (1.2,0.7) -- (1.2,1.8);
\draw[blue, ultra thick] (1.2,1.8) -- (3.3,1.8);
\draw[blue, ultra thick] (3.3,1.8) -- (3.3,2.5);
\draw[blue, ultra thick] (3.3,2.5) -- (6.7,2.5);
\draw[blue, ultra thick] (6.7,2.5) -- (6.7,1.3);
\draw[blue, ultra thick] (6.7,1.3) -- (8.4,1.3);
\draw[blue, ultra thick] (8.4,1.3) -- (8.4,0.6);
\draw[blue, ultra thick] (8.4,0.6) -- (9.5,0.6);
\draw[green, ultra thick] (9.5,0.1) -- (9.5,0.6);

\draw[red, ultra thick] (0.5,0.5) -- (1.6,0.5);
\draw[red, ultra thick] (1.6,0.5) -- (1.6,1.3);
\draw[red, ultra thick] (1.6,1.3) -- (3.5,1.3);
\draw[red, ultra thick] (3.5,1.3) -- (3.5,2.3);
\draw[red, ultra thick] (3.5,2.3) -- (6.3,2.3);
\draw[red, ultra thick] (6.3,2.3) -- (6.3,1.15);
\draw[red, ultra thick] (6.3,1.15) -- (8.2,1.15);
\draw[red, ultra thick] (8.2,1.15) -- (8.2,0.4);
\draw[red, ultra thick] (8.2,0.4) -- (9.5,0.4);

\node[draw, fill=red, shape=rectangle, minimum size=8pt, scale=0.6, label=right:{$\mathrm{C_j}$}] at (9.5,0.1) {};
\node[draw, fill=red, shape=rectangle, minimum size=8pt, scale=0.6, label=right:{$\mathrm{C_i}$}] at (0.5,0.2) {};

\end{tikzpicture}
\end{center}
\captionof{figure}{The red path is $\mathcal{L}_X^{\ell_1}$, and the blue path is $\mathcal{L}_X^{\ell_2}$, they are disjoint as a consequence of the enumeration introduced for the long crossings. The green part (subset of $\mathrm{V}'(\mathrm{C_i}) \cup \mathrm{V}'(\mathrm{C_j})$) allows us to reach $\mathrm{C_i}$ and $\mathrm{C_j}$.}
\label{fig:paths_of_random_cubes}
\end{figure}

\section{Lower bound on the cover time}\label{sec:lower_bound_fixed_radius}

Recall that $\mu=\mu(d)$ is the largest constant so that any two vertices of $\mathcal{G}_n$ in adjacent cubes $\mathrm{C}$ (that is, two cubes sharing a $d-1$-dimensional face) of sidelength $r/\mu$ are connected by an edge. We are going to use the tessellation into cubes $\mathrm{C_i}$ as described in Section~\ref{sec:renormalization} of side length $Mr$ and also use the aforementioned finer tessellation of $\Lambda_n$ into cubes $\mathrm{Q_i}$, of sidelength $\frac{r}{\mu}$. The cube with index $\mathrm{i} \in \mathbb{Z}^d$ is given by
\begin{equation}
    \mathrm{Q_i} := \mathrm{i} \frac{r}{\mu} + \left[ 0, \frac{r}{\mu} \right)^d. \hspace{2mm}
\end{equation}

Let $\phi=\phi(d)$ be the smallest integer such that any pair of vertices of $\cG_n$, contained in cubes $\mathrm{Q_i}$ and $\mathrm{Q_j}$, is not connected by an edge if $|\mathrm{i}-\mathrm{j}|_1>\phi$. For example,  $\phi(2)=3$. Let $x \in \cG_n$, with $x\in \mathrm{Q_j}$. Recall that we use the notation $\mathrm{j}=(\mathrm{j}_1,...,\mathrm{j}_d)$.
We say that $x$ is \emph{$m$-special} if the following two conditions hold:
\begin{itemize}
    \item Each of the cubes $\mathrm{Q_k}$, with $\mathrm{k}=\mathrm{j}+e_1s$ and $1\leq s\leq m$, has exactly one point of the PPP. We call the set of cubes $\{\mathrm{Q_k}\}$ by slight abuse of notation \emph{$m$-pending path}  with root at $x$ (note that the graph induced by a pending path is not necessarily a path, since vertices in two cubes at distance $2$ in the cube metric might also be connected by an edge, see Figure~\ref{fig:especialpath}).
    \item All cubes at $1$-norm at most $\phi$ from some cube $Q_k$ of the previous condition, excluding the cubes $Q_k$ themselves, have zero vertices of $\cG_n$.
    
\end{itemize}

\begin{figure}[htbp!]
\centering
    \begin{tikzpicture}
        \draw (0, 0) -- (6, 0);
        \draw (0, 1) -- (6, 1);
        \draw (1, 2) -- (6, 2);
        \draw (1, -1) -- (6, -1);
        \draw (1, -2) -- (6, -2);
        \draw (1, 3) -- (6, 3);

        \draw (8, 3) -- (12, 3);
        \draw (8, 2) -- (12, 2);
        \draw (8, 1) -- (12, 1);
        \draw (8, 0) -- (12, 0);
        \draw (8, -1) -- (12, -1);
        \draw (8, -2) -- (12, -2);

        \draw (8, 3) -- (8, -2);
        \draw (9, 3) -- (9, -2);
        \draw (10, 3) -- (10, -2);
        \draw (11, 3) -- (11, -2);
        \draw (12, 3) -- (12, -2);

        \foreach \x in {0,...,6} {
            \draw (\x, 0) -- (\x,1);
        }
        \foreach \x in {1,...,6} {
            \draw (\x, 1) -- (\x,2);
            \draw (\x, 0) -- (\x,-1);
            \draw (\x, -1) -- (\x,-2);
            \draw (\x, 2) -- (\x,3);
        }
        \node[red, circle, fill=red, scale=0.4] at (0.2,0.3) {};
        \node[blue, circle, fill=blue, scale=0.4] at (1.7,0.75) {};
        \node[blue, circle, fill=blue, scale=0.4] at (2.35,0.32) {};
        \node[blue, circle, fill=blue, scale=0.4] at (3.12,0.92) {};
        \node[blue, circle, fill=blue, scale=0.4] at (4.1,0.78) {};
        \node[blue, circle, fill=blue, scale=0.4] at (5.5,0.21) {};
        \node[blue, circle, fill=blue, scale=0.4] at (9.72,0.21) {};
        \node[blue, circle, fill=blue, scale=0.4] at (8.2,0.71) {};

        \node[scale=2] at (6.75, 1.5) {$\cdot$};
        \node[scale=2] at (7, 1.5) {$\cdot$};
        \node[scale=2] at (7.25, 1.5) {$\cdot$};

        \node[scale=2] at (6.75, -0.5) {$\cdot$};
        \node[scale=2] at (7, -0.5) {$\cdot$};
        \node[scale=2] at (7.25, -0.5) {$\cdot$};

        \draw[dash pattern=on 4pt off 4pt, thick,blue] (5.5,0.21) -- (6.31,0.55);
        \draw[dash pattern=on 4pt off 4pt, thick,blue] (5.5,0.21) -- (4.1,0.78);
        \draw[dash pattern=on 4pt off 4pt, thick,blue] (5.5,0.21) -- (4.1,0.78);
        \draw[dash pattern=on 4pt off 4pt, thick,blue] (3.12,0.92) -- (4.1,0.78);
        \draw[dash pattern=on 4pt off 4pt, thick,blue] (2.35,0.32) -- (4.1,0.78);
        \draw[dash pattern=on 4pt off 4pt, thick,blue] (2.22,0.32) -- (1.7,0.75);
        \draw[dash pattern=on 4pt off 4pt, thick,blue] (1.7,0.75) -- (3.12,0.92);
        \draw[dash pattern=on 4pt off 4pt, thick,blue] (1.7,0.75) -- (0.2,0.3);
        \draw[dash pattern=on 4pt off 4pt, thick,blue] (8.2,0.71) -- (9.72,0.21);
        \draw[dash pattern=on 4pt off 4pt, thick,blue] (8.2,0.71) -- (7.8,0.1);

        \draw[dash pattern=on 4pt off 4pt, thick,blue] (0.2,0.3) -- (2.35,0.32);
        \draw[dash pattern=on 4pt off 4pt, thick,blue] (2.35,0.32) -- (3.12,0.92);

        \node at (9.72, 0.45) {$y$};
        \node at (0.2, 0.5) {$x$};
    \end{tikzpicture}
    \caption{A pending path in 2 dimensions with root at $x$ ($x$ is thus $m$-special for some $m$); all  cubes surrounding the path are empty; $y$ is the vertex associated to $x$ that belongs to $\mathcal{R}$ (as defined in the proof). Observe that a vertex in the path could be connected by an edge to more than 2 vertices.}
    \label{fig:especialpath}

\end{figure}

Now, tessellate $\Lambda_{n}$ into cubes $\mathrm{C}$ of side length $Mr$ as in Section~\ref{sec:renormalization}, such that $Mr/2$ is large enough and also an integer multiple of $r/\mu$. Recall that by definition of the corresponding tessellation $\mathcal{K}_n$ we only consider cubes of sidelength $Mr$ fully contained in $\Lambda_n$. $M$ will be chosen large enough (depending on the dimension); it might happen that close to the boundary some part of $\Lambda_n$ remains uncovered by the tessellation, but this causes no problem with the proofs of this section.

The following lemma tells that for $c_1>0$ small enough, a.a.s.\ we will find many $\left\lfloor c_1 \frac{\log n}{r^d} \right\rfloor$-pending paths, whose root is a vertex of the giant of $ \cG_n$.
\begin{lemma}\label{lem:special_paths}
    Let $d\geq 2$, $\eps>0$. There exist small positive constants $c_1, \xi, \gamma$ ($\xi$ depends only on $d,c_1, \gamma$), such that if $(1+\eps)r_g \le r < \gamma r_c$, the event 
    \begin{equation}
        \left|\left\{x\in L_1: x \text{ is } \left(\left\lfloor c_1 \frac{\log n}{r^d} \right\rfloor\right)-\text{special}\geq n^{\xi}\right\}\right|
    \end{equation}
    occurs a.a.s.
\end{lemma}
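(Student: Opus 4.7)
The plan is to prove the lemma by a first-and-second-moment argument for the number of $m$-special vertices, with $m=\lfloor c_1\log n/r^d\rfloor$, combined with a local decoupling argument showing that each candidate root lies in $L_1(G)$ with positive probability.

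\textbf{First moment.} Conditional on a PPP point at position $y\in \mathrm{Q_j}$, the events defining $m$-specialness concern the PPP restricted to disjoint cubes, so by independence of the PPP on disjoint regions the Palm density of $m$-special points equals
$$
\rho(y) = \left((r/\mu)^d e^{-(r/\mu)^d}\right)^m \cdot e^{-C_\phi (r/\mu)^d m},
$$
where the first factor comes from the pending-path cubes (each containing exactly one point) and the second from the emptiness of the $C_\phi m$ halo cubes, with $C_\phi$ a constant depending only on $d$ and $\phi$. Substituting $m=\lfloor c_1\log n/r^d\rfloor$ and integrating over $\Lambda_n$ gives
$$
\mathbb{E}[\widetilde N]\;=\;\Theta(n)\cdot n^{c_1 F(r)},\qquad F(r)\;:=\;d\log(r/\mu)/r^d - (1+C_\phi)/\mu^d,
$$
where $\widetilde N$ counts all $m$-special vertices (not necessarily in $L_1(G)$). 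Since $\log(r/\mu)/r^d$ is uniformly bounded for $r\in[(1+\eps)r_g,\gamma r_c]$, $F(r)$ is uniformly bounded, so taking $c_1=c_1(d,\gamma)$ small enough gives $1+c_1 F(r)\geq 2\xi$ for some $\xi=\xi(d,c_1,\gamma)>0$, whence $\mathbb{E}[\widetilde N]\geq n^{2\xi}$.

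\textbf{Membership in $L_1(G)$.} Conditional on $x$ being $m$-special, the PPP in $\Lambda_n\setminus\overline B$ is a genuine PPP of intensity $1$, where $\overline B$ denotes the brick formed by the root cube, the pending path and the halo. The root $x\in\mathrm{Q_j}$ has all its potential neighbors in the unconstrained cubes $\mathrm{Q_{j-e_1}}$ and $\mathrm{Q_{j\pm e_i}}$ for $i\geq 2$. With constant probability $x$ has such a neighbor, and since $r>r_g$ that neighbor lies in $L_1(G)$ with probability at least $\theta(r)+o(1)>0$, where $\theta(r)$ is the asymptotic giant density. Hence $\mathbb{P}(x\in L_1(G)\mid x\text{ is $m$-special})\geq \alpha$ for a constant $\alpha=\alpha(d,r)>0$, so if $N$ is the number of $m$-special vertices in $L_1(G)$ we obtain $\mathbb{E}[N]\geq \alpha\,\mathbb{E}[\widetilde N]$.

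\textbf{Concentration.} For the second moment we use Mecke's formula: ordered pairs $(y,y')$ of PPP points split into those with disjoint bricks and those with overlapping bricks. Disjoint pairs contribute a factorizing term, with the pending-path/halo events decoupled by PPP independence and the giant-membership correlation controlled by replacing $L_1(G)$ by the giant of the PPP with both bricks removed (which differs from $L_1(G)$ in $O(\log n)$ vertices). Overlapping pairs contribute at most $\rho(y)$ times the overlap volume $O(m(r/\mu)^d)=O(\log n)$ per $y$, summing to $O(\mathbb{E}[\widetilde N]\log n)=o(\mathbb{E}[N]^2)$. Chebyshev's inequality then yields $N\geq \mathbb{E}[N]/2\geq n^\xi$ a.a.s.

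\textbf{Main obstacle.} The delicate step is the decoupling of the global event $\{x\in L_1(G)\}$ from the local $m$-special event, both for the first moment (to get $\alpha>0$) and for the second moment (to control covariances). This is addressed by trimming the PPP on the (small) brick regions and noting that the trimmed giant still has density $\theta(r)+o(1)$ since the total brick volume is $o(n)$; each candidate root is then connected to this trimmed giant via a free-cube neighbor independently of the pending-path conditions.
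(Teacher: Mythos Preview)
Your approach is quite different from the paper's, and the second-moment step has a real gap.

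The paper avoids the global--local coupling you flag as the ``main obstacle'' by a structural trick: rather than running a second-moment argument over all vertices and then conditioning on $L_1$-membership, it first uses the renormalization scheme (via Lemma~\ref{lem:crossingsstrips}) to exhibit $\Theta(n^{1/d})$ disjoint horizontal crossings of good cubes in a slice $\mathcal{K}_{m_n}^2$. The right endpoints of these crossings are automatically in $L_1(G)$ (they lie in components of size $\Omega(n^{1/d})$, larger than the second component by Theorem~\ref{thm:giantcomponents}), and they sit at the right boundary of $\Lambda_{m_n}$, so the brick region for each pending path falls entirely in the unexplored strip $\Lambda_n\setminus\Lambda_{m_n}$. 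The $m$-special event for each endpoint is therefore independent of the crossing structure that certified $L_1$-membership, and endpoints at vertical separation exceeding $2\phi$ give independent $m$-special indicators. The count then reduces to a binomial $\mathrm{Bin}(c_4 n^{1/d}, n^{-\rho})$ with $\rho<1/d$, concentrated by Chernoff. No second-moment computation and no decorrelation of the giant is needed.

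In your argument, applying Chebyshev to $N$ requires $\operatorname{Var}(N)=o(\mathbb{E}[N]^2)$, which for disjoint-brick pairs $(y,y')$ boils down to
\[
\mathbb{P}\bigl(y,y'\in L_1(G)\,\bigm|\, y,y'\text{ both $m$-special}\bigr)\;\le\;(1+o(1))\,\alpha^2.
\]
But the events $\{y\in L_1\}$ and $\{y'\in L_1\}$ are increasing in the Poisson configuration and hence positively correlated; replacing $L_1(G)$ by the giant of the brick-trimmed process does not help, since that trimmed giant is still a single global object on which both indicators depend. What is actually needed is a quantitative two-point decorrelation for supercritical continuum percolation (the joint probability of membership in the giant tending to the square of the one-point probability as $d_E(y,y')\to\infty$), which is true but is a nontrivial mixing input your outline does not provide. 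The paper's crossing device sidesteps this entirely: the candidate roots are placed in $L_1$ \emph{before} the pending-path region is even examined, so the only randomness left is local and genuinely independent across well-separated roots.
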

\begin{proof} 
In order to make sure that we can define a $\left\lfloor c_1 \frac{\log n}{r^d} \right\rfloor$-pending path in $\Lambda_n$, we would like its root to be inside $\Lambda_{m_n}$, where
    \begin{equation}
    m_n=n^{1/d}-\frac{r}{\mu}\left\lfloor c_1\frac{\log n}{r^d} \right\rfloor. 
\end{equation}
    Let $\mathcal{K}_{m_n}^2$ be the 2-dimensional slice where the first two coordinates are not fixed and all others are equal to 0 (see Figure~\ref{fig:bigcube}).
     Observe that by Theorem~\ref{thm:giantcomponents}, a.a.s. all vertices of $\mathcal{G}_n$ that belong to a horizontal crossing of $\mathcal{K}_{m_n}^2$ belong to $L_1$: indeed, each of these crossings has Euclidean diameter $\Omega(m_n/r_c)$, and the latter is much larger than $c\log^2n$. 
     
     By Lemma~\ref{lem:crossingsstrips}, a.a.s., there are at least $c_2 m_n$ horizontal disjoint crossings of good cubes inside $\mathcal{K}_{m_n}^2$ for some $c_2 > 0$, and we may assume this below. Each horizontal crossing of good cubes contains a crossing of $\mathcal G_n$ inside $\Lambda_{m_n}$ (see Remark \ref{rem:domination_largest_component_good_cubes}). Recall also that the right endvertices of the crossing in $\mathcal G_n$ can be chosen such that they are at distance less or equal than $r/\mu$ from the right boundary of the slice $\mathcal{K}_{m_n}^2$. We denote by $\mathcal E$ the set of vertices in $\mathcal K^2_{m_n}$ at distance at most $r/\mu$ from the right boundary of the latter set and that are an endpoint of some horizontal crossing.

    Let $x\in \mathrm{Q_i}$ and $y \in \mathrm{Q_j}$ be vertices of $ \mathcal{G}_n$, with $\mathrm{i}_1=\mathrm{j}_1$ (that is, with first coordinates being equal). By definition of $\phi$ and special vertex, if $|\mathrm{i}-\mathrm{j}|> 2\phi$ (see the definition of $\phi$ above) then the events of the vertices $x,y$ being $\left(\left\lfloor c_1\frac{\log n}{r^d} \right\rfloor\right)$-special are independent. Also, note that $\mathcal{G}_n \cap \mathcal K_{m_n}^2$ is independent of $\mathcal{G}_n\cap (\Lambda_n\setminus \mathcal{K}_{m_n}^2)$. In particular, the event that a vertex $x \in \cG_n$ that is inside of $\mathcal{K}^{2}_{m_n}$, is at distance less than $r/\mu$ from the right boundary of $\mathcal{K}^2_{m_n}$ and is an endvertex of some horizontal crossing in $\mathcal K^2_{m_n}$ does not have an effect on the probability to have a $\left\lfloor c_1 \frac{\log n}{r^d} \right\rfloor$-pending path in $\Lambda_n \setminus \mathcal{K}^{2}_{m_n}$ attached to it. 
    Using the fact that $\lfloor c_1 \log n / r^d \rfloor \ge 1$, the probability for the latter event to happen is equal to 
    \begin{equation}
        (e^{-(r/\mu)^d} (r/\mu)^d)^{\left\lfloor c_1\frac{\log n}{r^d}\right\rfloor} (e^{- (r/\mu)^d})^{c'\left\lfloor c_1\frac{\log n}{r^d}\right\rfloor}(e^{- (r/\mu)^d})^{c''} \ge n^{-\rho},
    \end{equation}
    where $c', c''$ are constants that depend only on the dimension and $\rho$ is a constant that depends on $c_1, d, \eps, \gamma$. Observe that we can choose $c_1$ small enough such that $\rho < 1/d$, and then we can choose $\gamma$ small enough such that $c_1\frac{\log n}{r^d}$ is always greater than 1.
    Hence, recalling that the number of horizontal crossings in $\mathcal K^2_{m_n}$ is a.a.s.\ at least $c_2m_n$, and noting that a  constant fraction (for each element in $\mathcal{E}$ we may have to eliminate at most a dimension-dependent constant number of elements that are too close) of the vertices in $\mathcal E$ gives rise to independent events of being $\lfloor c_1 \log n / r^d \rfloor$-special, we have under the event that there are at least $c_2 m_n$ crossings in $\mathcal K^2_{m_n}$,
    \begin{equation}
    \begin{split}
        \mathbb{P} \left( |\{x \in \mathcal E: x\text{ is }(\lfloor c_1\log n/r^d \rfloor)-\mbox{special}\} |< n^{\xi}  
\right) \leq \mathbb{P} \left( Bin\left(c_4 n^{1/d},n^{-\rho}\right) < n^{\xi}\right).
    \end{split}
    \end{equation}
   By choosing $\xi =(1/d-\rho)/2$ and applying Chernoff bounds, and noting that $\mathcal E\subset L_1$ a.a.s.\ (each horizontal crossing in $\mathcal K^2_{m_n}$ belongs to $L_1$ a.a.s.\ and  each vertex in $\mathcal E$ is connected to some crossing by construction), the theorem follows.
\end{proof}

\begin{theorem}\label{thm:lower_bound_cov_r_not_fixed}
    Let $\eps$ be an arbitrarily small positive constant. Let $(1+\eps)r_g \le r \le (1-\eps) r_c$. Then a.a.s.,
    \begin{equation}
        \tau_{cov}(L_1) \geq \delta n \log^2 n,
    \end{equation}
    where $\delta$ is a positive constant that only depends on $\eps$.
\end{theorem}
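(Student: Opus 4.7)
The plan is to apply Theorem~\ref{thm:lowerboundcov} to $L_1(G)$. Since $|E(L_1(G))| = \Theta(n r^d)$ by Lemma~\ref{lem:sizergg}, it suffices to exhibit a set $V' \subseteq L_1(G)$ with $\log|V'| = \Omega(\log n)$ and $\min_{x \neq y \in V'}\mathcal{R}(x \longleftrightarrow y) \geq c \log n/r^d$; this yields $\tau_{cov} \geq \gamma_2 \cdot \Theta(n r^d) \cdot \Omega(\log n) \cdot \Omega(\log n/r^d) = \Omega(n \log^2 n)$. I will split into two sub-regimes depending on where $r$ lies in $[(1+\eps)r_g, (1-\eps)r_c]$.

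When $(1+\eps)r_g \leq r < \gamma r_c$ (with $\gamma$ as in Lemma~\ref{lem:special_paths}), I would take $V'$ to be the set of far endpoints $x_m$ of the pending paths of length $m := \lfloor c_1 \log n/r^d \rfloor$ rooted at the $n^\xi$ special vertices provided by Lemma~\ref{lem:special_paths}. Because distinct special vertices give disjoint pending paths, $|V'| \geq n^\xi$. To lower bound $\mathcal{R}(x_m \longleftrightarrow y_m)$ for two such endpoints, I apply Nash--Williams (Lemma~\ref{lem:Nash-Williamas}) on the pending path attached to $x_m$. For each integer $s \in \{\phi+1, 2\phi+1, 3\phi+1, \ldots\} \cap [\phi+1, m]$, let $E_s$ be the edges with exactly one endpoint in $\{x_s, \ldots, x_m\}$. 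Using the second condition in the definition of special, which forces cubes at $\ell_1$-distance at most $\phi$ to the right of the root to be empty, and the fact that a vertex $x_j$ of the pending path with $j \geq \phi+1$ can only connect inside the path, each $E_s$ consists of at most $\phi^2 = O(1)$ chord edges of the pending path. Spacing the $s$'s by $\phi$ forces edge-disjointness, yielding $\Omega(m/\phi)$ disjoint cutsets, and Nash--Williams gives $\mathcal{R}(x_m \longleftrightarrow y_m) = \Omega(m/\phi^3) = \Omega(\log n/r^d)$.

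When $\gamma r_c \leq r \leq (1-\eps)r_c$, we have $r^d = \Theta(\log n)$, and the required lower bound $\log n/r^d$ becomes $\Theta(1)$; hence it suffices to produce $|V'| = n^{\Omega(1)}$ vertices of $L_1(G)$ pairwise at effective resistance at least $2$. I would take $V'$ to be the set of degree-$1$ vertices of $G$ lying in $L_1(G)$. By Slivnyak's formula together with $V_d r^d \leq (1-\eps)^d \log n$, a typical PPP point has probability at least $V_d r^d \cdot e^{-V_d r^d} \geq c \log n \cdot n^{-(1-\eps)^d}$ of having exactly one neighbor within distance $r$, so by Campbell's formula the expected number of degree-$1$ vertices is $\Omega(n^{1-(1-\eps)^d} \log n) = n^{\Omega(1)}$. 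Concentration follows from partitioning $\Lambda_n$ into cells of side $3r$, in which the ``contains a degree-$1$ vertex'' events for non-adjacent cells are independent, combined with a standard second-moment estimate. To force these degree-$1$ vertices to lie in $L_1(G)$, I couple with the renormalization of Section~\ref{sec:renormalization}: by conditioning on the cube containing the unique neighbor $w$ being good and belonging to the giant of the renormalized lattice (a constant-probability event), both $w$ and $v$ belong to $L_1(G)$. For two distinct degree-$1$ vertices, the two pendant edges form edge-disjoint single-edge cutsets, so Nash--Williams gives $\mathcal{R}(v_1 \longleftrightarrow v_2) \geq 2$.

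The main obstacle I anticipate is the second sub-regime, where the construction of $V'$ mixes a local Poisson event (degree $1$) with a global percolation event (membership in $L_1$); decoupling these cleanly is the role played by the renormalization of Section~\ref{sec:renormalization}, since good cubes guarantee $L_1$-membership via purely local information. A subsidiary technical point in the first sub-regime is the chord-edge accounting in the Nash--Williams step, which goes through because the chord degree inside a pending path is bounded by a constant depending only on $d$.
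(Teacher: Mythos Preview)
Your overall plan and case split match the paper exactly, and the first sub-regime is essentially identical to the paper's argument (the paper packages $\sigma$ consecutive cubes into each cutset while you take edge-boundaries of tail segments spaced by $\phi$; both give $\Theta(m)$ disjoint cutsets of constant size). One small point: you need the split threshold to be some $\gamma'\le\gamma$ small enough that $m=\lfloor c_1\log n/r^d\rfloor$ exceeds a few multiples of $\phi$, so that the number of cutsets is genuinely $\Omega(m)$; the paper makes exactly this adjustment.

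The second sub-regime has a real gap. Your key sentence ``by conditioning on the cube containing the unique neighbor $w$ being good and belonging to the giant of the renormalized lattice \ldots\ both $w$ and $v$ belong to $L_1(G)$'' is not correct: the renormalization scheme (paragraph after Lemma~\ref{lem:Pisztorapenrose}) only places vertices of the \emph{largest component of $G$ restricted to the good cube} into $L_1(G)$, not arbitrary vertices of a good cube. So you still owe the step ``$w$ lies in the crossing component of its cube'', and this is exactly where the local degree-$1$ conditioning bites: it forces $B_r(v)\setminus\{w\}$ to be empty, which is negatively correlated with $w$ being well connected inside its cube (and if $|v-w|$ is small, $w$ may have almost no neighbors at all). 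Your second-moment concentration argument also only controls the total count of degree-$1$ vertices, not the count of those additionally in $L_1(G)$, and the latter is not locally determined by $3r$-cells.

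The paper avoids both problems by a different route that does not use the renormalization at all in this regime. It restricts to degree-$1$ vertices whose unique neighbor is at distance at least $r/2$ (so $B_r(w)\setminus B_r(v)$ has volume $\Theta(r^d)$), proves by a short first-moment count on $*$-connected empty cells that a.a.s.\ every non-giant component has Euclidean diameter $O(r)$, and then runs a greedy exploration from $w$ that with probability $1-o(1)$ (conditionally on the degree-$1$ event) produces a path advancing $\Theta(r)$ in the $e_1$-direction for a fixed number of steps. The resulting path exceeds the diameter bound for non-giant components, certifying $v\in L_1(G)$; since this succeeds with probability $1-o(1)$ rather than merely a constant, a Markov bound plus the second-moment count you already have gives the needed $n^{\Omega(1)}$ degree-$1$ vertices in $L_1(G)$.
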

\begin{proof}
\textbf{Case 1: }$r<\gamma' r_c$, where $\gamma'$ is a small enough constant to be fixed later. Assume also that $\gamma' \leq \gamma$, where $\gamma$ is the same constant from Lemma~\ref{lem:special_paths}. Define as before
\begin{equation}
    m_n=n^{1/d}-\frac{r}{\mu}\left\lfloor \frac{c_1 \log n}{r^d} \right\rfloor.
\end{equation}
Let $\mathcal{P}$ be the set of $\left\lfloor \frac{c_1 \log n}{r^d}\right\rfloor$-special vertices of $\Lambda_{m_n}$ that are contained within $L_1$ (restricted to $\Lambda_{m_n})$. Denote as before by $\mathcal{E}$ the set of vertices of $L_1$ that are in the rightmost cube of a pending path with root at a vertex in $\mathcal{P}$ (see Figure~\ref{fig:especialpath}):
\begin{equation}
    \mathcal{E}=\{y \in L_1: y\in \mathrm{C_j}, \, \mathrm{j}=\mathrm{i}+\lfloor c_1\log n/r^d \rfloor, x\in  \mathrm{Q_i} \text{ and } x\in \mathcal{P}\}
\end{equation}
Note that $|\mathcal{P}|=|\mathcal{E}|$. By Lemma~\ref{lem:special_paths}, $|\mathcal{E}|\geq n^{\xi}$ a.a.s., and by construction, $\mathcal{E}\subseteq L_1$. Now, we give a lower bound on the effective resistance between any pair of vertices of $\mathcal{E}$. Take $x,y \in \mathcal{E}$, and let us construct edge-disjoint edge-cutsets separating $x$ from $y$. Let $x\in \mathrm{Q_i}$. Let $\sigma(d)=\sigma$ be the
smallest integer such that when taking out $\sigma$ consecutive cubes of a pending path, any path of vertices in $\mathcal{G}_n$ connecting the endpoints of the path must have at least one edge with both endpoints inside the $\sigma$ many consecutive cubes. For $1 \leq k\leq \left\lfloor\frac{1}{\sigma}\left\lfloor \frac{c_1 \log n}{r^d} \right\rfloor\right\rfloor - 1$, let $\Pi_k$ be the set of edges of $L_1$  with both endpoints inside two different cubes $\mathrm{Q_j}, \mathrm{Q_j'}$ for some $j,j'$ satisfying $\mathrm{j}=\mathrm{i}-e_1s$ and $\sigma (k-1)\leq s< \sigma k$ and $\mathrm{j'}=\mathrm{i}-e_1s'$ and $\sigma (k-1)\leq s'< \sigma k$. Choose $\gamma'$ small enough such that $\left\lfloor\frac{1}{\sigma}\left\lfloor \frac{c_1 \log n}{r^d} \right\rfloor\right\rfloor$ is greater or equal to 2. Note that $|\Pi_k|\leq \binom{\sigma}{2}$, as there is only one vertex per cube $\mathrm{Q_j}$. Moreover, observe that $\Pi := \{\Pi_k\}_k$ is a family of disjoint edge-cutsets between $x,y$, as any path between both vertices has to contain an edge belonging to $\Pi_k$ (by definition of $\sigma$), for any $k$. By Lemma~\ref{lem:Nash-Williamas} we then have
    \begin{equation}
        \mathcal{R}(x\longleftrightarrow y,L_1)\geq \sum_{k=1}^{\left\lfloor\frac{1}{\sigma}\left\lfloor \frac{c_1 \log n}{r^d} \right\rfloor\right\rfloor-1} \frac{1}{\binom{\sigma}{2}} \geq \delta_1 \frac{\log n}{r^d},
    \end{equation}
    for some $\delta_1>0$. Using Lemma ~\ref{lem:sizergg} together with the formula of Theorem~\ref{thm:lowerboundcov}  we obtain the desired result for this case.

    \textbf{Case 2: $\gamma' r_c \leq r\leq (1-\eps)r_c$.} In this case we will use a different strategy: we will show that a.a.s.\ the number of vertices of $L_1$ with degree 1 is $\Omega(n^{\xi'})$ for some $\xi' > 0$. Denote by $R_n$ the number of vertices of $\mathcal G_n$ that do not belong to the giant $L_1$, and by $S_n$ the number of isolated vertices of $\mathcal G_n$. Denote also by $D_1,D_{1,\mathrm{in}},D_{1,\mathrm{out}}$, the number of vertices with degree 1 of $\mathcal G_n$, the number of vertices of degree 1 in $L_1$, and the number of vertices with degree 1 in $\mathcal G_n$ but not in $L_1$, respectively. Clearly, we have
    \[
    D_{1,\mathrm{out}} \le R_n-S_n.
    \]
    By \cite[Theorem 1.1]{penrose2026components}, a.a.s., $R_n-S_n=o(S_n)$ and  $S_n=\Theta(ne^{-V_dr^d})$, with $V_d$ being the volume of the $d$-dimensional ball with unit radius.
    Furthermore, \[
    \mathbb E[D_1] =\Theta(nr^de^{-\theta_d r^d}), 
    \] and by a standard second moment method, we also have a.a.s.
     $D_{1}=\Theta(\mathbb E[D_1])$. Note that thus a.a.s.\  $S_n=o(D_1)$. Hence, a.a.s.,
    \[D_{1,\mathrm{in}}=D_1-D_{1,\mathrm{out}} \ge D_1-o(S_n) =\Theta(D_1).\]
    Recalling that $r_c= (1+o(1))(\frac{\log n}{\theta_d})^{1/d}$, by our assumption on $r$,  we deduce that $\theta_d r^d<c \log n$ for some $c < 1$. Hence, a.a.s.\ $D_{1,\mathrm{in}}\ge n^{\xi'}$ for some $\xi' > 0$. 
    To conclude, let $V'$ then be the set of vertices of $L_1$ of degree exactly $1$. For any $x,y \in V', x\neq y$, taking out the only edge incident to say $x$, we obtain an edge-cutset of size $1$ between $x$ and $y$, and thus, by Lemma~\ref{lem:Nash-Williamas}, 
    $$\mathcal{R}(x\longleftrightarrow y,L_1)\geq 1.$$ Since this holds for any different $x,y \in V'$, using Lemma~\ref{lem:sizergg}, by Theorem~\ref{thm:lowerboundcov}, a.a.s.\ there exist some $\delta_2, \delta_3> 0$, such that 
    $$
    \tau_{cov} \ge \delta_2 nr^d \log n \ge \delta_3 n \log^2 n, 
    $$
    and this case is finished as well, and by choosing $\delta=\min\{\delta_1, \delta_3\}$, the proof of the theorem is finished.
    \end{proof}

\section{Proof of the upper bound on the cover time with fixed radius}
\label{sec:upperboundfixedradius}
The goal of this section is to prove the following theorem:
\begin{theorem}\label{thm:upperboundcov_fixed_r}
Let $d \ge 2$, and let $\eps > 0$ be arbitrarily small. Assume that $(1+\eps)r_g \le r \le r_0$, for some fixed $r_0$. A.a.s., there is a constant $\gamma(d,r_0,\eps)>0$ such that 
\begin{equation}
\tau_{cov}(L_1) \leq \gamma n\log^2 n. 
\end{equation}
\end{theorem}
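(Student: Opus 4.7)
The plan is to combine Theorem~\ref{thm:upperboundcov_chandra} with a sharp bound on effective resistance. Since $r=O(1)$, Lemma~\ref{lem:sizergg} gives $|E(L_1)|=O(n)$ a.a.s., and trivially $\log|V(L_1)|\le \log n$, so it suffices to prove that a.a.s.\ every pair $x,y\in L_1$ satisfies $\mathcal{R}(x\leftrightarrow y, L_1)=O(\log n)$. First I would reduce to ``backbone'' vertices: by the renormalization of Section~\ref{sec:renormalization}, a.a.s.\ the good cubes contain a unique giant cluster $\mathcal{B}$ whose largest components merge into $L_1$, and a standard supercritical-site-percolation argument shows that every $x\in L_1$ is within Euclidean distance $O(\log n)$ of the largest component of some $\mathrm{C_i}\in\mathcal{B}$. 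Theorem~\ref{thm:chemical_distance_friedrich} converts this into a graph-distance of $O(\log n)$ from $x$ to such a backbone vertex $x^\star\in L_1$, so Remark~\ref{rem:effective_resistance} and Lemma~\ref{lem:triangleinequality} reduce the task to bounding $\mathcal{R}(x^\star\leftrightarrow y^\star, L_1)$ for backbone vertices only.

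For backbone vertices $x^\star\in \mathrm{C_{i_x}}$ and $y^\star\in \mathrm{C_{i_y}}$ lying in a common $2$-dimensional slice, in parallelepipeds $\mathrm{T}_{s_1,r_1}$ and $\mathrm{T}_{s_2,r_2}$ with $r_1=r_2$ and $s_2-s_1$ odd, I would invoke the random-path construction of Section~\ref{subsection:random_path_good_cubes}: with $X$ uniform on $[0,K]$, where $K=(s_2-s_1+1)/2$, lift each of the $\lfloor c\log n\rfloor$ cube-disjoint paths $\mathcal{L}_X^\ell$ (extended through $\mathrm{V}'_1(\mathrm{C_{i_x}})$ and $\mathrm{V}'_1(\mathrm{C_{i_y}})$, or mirror copies, so they begin at $x^\star$ and end at $y^\star$) to actual vertex-paths in $L_1$ via the crossing components of the intersected good cubes. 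Placing $1/\lfloor c\log n\rfloor$ units of flow on each lifted path and averaging over $X$ produces a unit flow $\theta$ from $x^\star$ to $y^\star$. Cases with $r_1\ne r_2$, $s_2-s_1$ even, or the two backbone cubes in different slices are handled by triangulating through $O(1)$ intermediate backbone vertices via Lemma~\ref{lem:triangleinequality}, each hop contributing $O(\log n)$.

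The hard part is the energy estimate $\mathcal{E}(\theta)=O(\log n)$. By Jensen's inequality applied to $X\mapsto \theta_X$ together with the fact that the paths $\{\mathcal{L}_X^\ell\}_\ell$ are edge-disjoint for fixed $X$, one reduces to
\begin{equation*}
\mathcal{E}(\theta)\le \frac{1}{(K\lfloor c\log n\rfloor)^{2}}\sum_{(a,b)}\sum_{e\in (a,b)}\nu(e)^{2},
\end{equation*}
where the outer sum ranges over parallelepipeds visited by some $\mathcal{L}_X$, and $\nu(e)$ is the Lebesgue measure of $\{X:e\in \bigcup_\ell \mathcal{L}_X^\ell\}$. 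A geometric analysis of the bent line $\mathcal{L}_X$ (whose knee is $y_X$) shows that for a parallelepiped at horizontal index $a$ from the source (and $K-a$ from the target), $\nu(e)\le \min(K/a,K/(K-a))$ for every edge inside it, and that only $O(a)$ distinct parallelepipeds are visited at horizontal index $a$ across the family. Combining this with the $O(\log^{2}n)$ edges per parallelepiped delivered by Corollary~\ref{cor:edgedeviation}, and using
\begin{equation*}
\sum_{a=1}^{K} a\cdot\left(\frac{K}{a}\right)^{2}=O(K^{2}\log K)
\end{equation*}
(plus the symmetric contribution from the target side), we obtain $\mathcal{E}(\theta)=O(\log^{2}n\cdot K^{2}\log K/(K\log n)^{2})=O(\log n)$. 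The endpoint parallelepipeds $\mathrm{T}_{s_1,r_1},\mathrm{T}_{s_2,r_2}$ and the connector sub-paths $\mathrm{V}'_i(\cdot)$ contribute only $O(1)$ and are absorbed, giving $\mathcal{R}(x^\star\leftrightarrow y^\star, L_1)=O(\log n)$ and completing the proof.
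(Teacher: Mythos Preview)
Your argument is essentially the paper's own proof: the same random-flow construction over $\mathcal{L}_X^\ell$, the same $c'/t$ probability bound per level (your $\nu(e)\le K/a$), the same $O(\log^2 n)$ edge count per parallelepiped from Corollary~\ref{cor:edgedeviation}, and the same harmonic sum yielding $O(\log n)$. One slip: the connector sub-paths $\mathrm{V}'_1(\mathrm{C_i}),\mathrm{V}'_1(\mathrm{C_j})$ do \emph{not} contribute $O(1)$ but $O(\log n)$, since they carry $\Theta(\log n)$ edges on which the flow is not split among the $\ell$'s (all $\mathcal{J}_X^\ell$ share an initial segment of $\mathcal{V}_x^1$, so your edge-disjointness premise and hence the displayed energy formula fail there); this is exactly the first double sum in the paper's inequality~\eqref{eq:upper_bound_formula_cubes}, and its correct bound $O(\log n)$ is still absorbed. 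Also, what you call ``Jensen'' is really just $|\mathbb{E}_X[\theta_X(e)]|\le \mathbb{E}_X[|\theta_X(e)|]$; invoking convexity of the energy the other way would lose the averaging gain.
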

\begin{proof}
    Since we use the construction given in Section~\ref{sec:randompaths}, we recall our initial assumptions made on the good cubes $\mathrm{C_i}$ and $\mathrm{C_j}$: both cubes belong to the same horizontal strip and they both belong to parallelepipeds not at the boundary of the slice and separated by an even number of parallelepipeds. We will lift these assumptions later.

    Consider any two vertices $x,y \in \cG_n$ that belong to the good cubes $\mathrm{C_i}$ and $\mathrm{C_j}$ respectively, and both belong to the largest connected component of the renormalized graph of good cubes. As the good cubes $\mathrm{C_i}$ and $\mathrm{C_j}$ belong to long crossings of good cubes (in the renormalized graph) which at the same time contain long crossings of $\cG_n$ of length $\Theta(n^{1/d})$ (see item 2) of Remark~\ref{rem:domination_largest_component_good_cubes}), by Theorem~\ref{thm:giantcomponents}, a.a.s.\ $x,y$ then also belong to $L_1$. 

    As explained before, without loss of generality we may suppose that $\mathrm{V}_1'(\mathrm{C_i})$ intersects $\mathcal{L}_X^{\ell}$. Denote by $\mathrm{C}^1_{\ell}$ the only cube of $\mathcal{L}_X^{\ell}$ that intersects $\mathrm{V}_1'(\mathrm{C_i})$. In the same way, we denote by $\mathrm C^2_\ell$ the intersection between $\mathcal L^\ell_X$ and $\mathrm V_1' (\mathrm C_j)$ (we assumed without loss of generality that such intersection is non-empty).

    Select an induced path of $\mathcal G_n$ that starts at $x$, is contained in $\mathrm{V}_1'(\mathrm{C_i})$ and has its final vertex at the last cube of $\mathrm{V}_1'(\mathrm{C_i})$.  Denote this path by $\mathcal{V}^1_{x,\ell}$. We do the same with $\mathrm{V}_2'(\mathrm{C_i})$ and denote the path by $\mathcal{V}^2_{x,\ell}$. We proceed in the same way for the vertex $y$, denoting the respective paths by $\mathcal V^1_{y,\ell}$ and $\mathcal V^2_{y,\ell}$, respectively.
    
    Since $\mathcal{L}_X^{\ell}$ is a path of good cubes that starts at $\mathrm{C}^1_\ell$ and ends at $\mathrm{C}^2_\ell$, we may obtain an induced path, totally contained within $\mathcal{L}_X^{\ell}$,  of $\cG_n$ that starts at some vertex of $\mathcal{V}^1_{x,\ell}$ inside $\mathrm{C}^1_\ell$ and finishes at some vertex of $\mathcal{V}^1_{y,\ell}$ inside $\mathrm{C}^2_\ell$. Note that since all cubes are good,  we can use the giant components of the good cubes, and we may choose a path that intersects $\mathcal{V}^1_{x,\ell}$ and $\mathcal{V}^1_{y,\ell}$ only at its initial and final vertices, respectively. We denote this path by $J^{\ell}_{X}$, see Figure ~\ref{fig:path_from_random_path_of_cubes}.

    We concatenate $\mathcal{V}^1_x$, $J^{\ell}_{X}$ and $\mathcal{V}^1_y$ in this order, such that the resulting path is self-avoiding (see Figure ~\ref{fig:path_from_random_path_of_cubes} that explains the first part of the concatenation of blue and red). Note that the vertices of concatenation could be different to the initial and final vertices of $J^\ell_X$, but by construction there is at least one point of intersection between the paths. We denote the latter path by $\mathcal J^\ell_X$. 
\begin{figure}[htb]
\begin{center}
\begin{tikzpicture}[scale=0.8]
\def\squarex{3.5} 
\def\squarey{3.5} 
    \draw[thick] (10*\squarex, -0.5*\squarey) rectangle (10*\squarex + 2*\squarex, 0*\squarey + 1.5*\squarey);

\draw[draw=black, fill=blue!20,ultra thin] 
        (11.3*\squarex, 1.4*\squarey) rectangle (11.3*\squarex+0.1*\squarex, 1.5*\squarey);

\draw[draw=black, fill=blue!20,ultra thin] 
        (11.3*\squarex, 1.3*\squarey) rectangle (11.3*\squarex+0.1*\squarex, 1.4*\squarey);

\foreach \y in {10.9, 11, 11.1, ..., 11.3} {
    \draw[draw=black, fill=blue!20,ultra thin] 
        (\y*\squarex, 1.3*\squarey) rectangle (\y*\squarex+0.1*\squarex, 1.4*\squarey);
}

\foreach \y in {0.5, 0.6, 0.7, ..., 1.3} {
    \draw[draw=black, fill=blue!20, ultra thin] 
        (10.9*\squarex, \y*\squarey) rectangle (11*\squarex, \y*\squarey + 0.1*\squarey);
}

\draw[draw=black, fill=red, ultra thin] 
        (10.9*\squarex, 0.5*\squarey) rectangle (11*\squarex, 0.5*\squarey + 0.1*\squarey);

\foreach \y in {0.1, 0.2, 0.3, ..., 0.5} {
    \draw[draw=black, fill=green!20, ultra thin] 
        (10.9*\squarex, \y*\squarey) rectangle (11*\squarex, \y*\squarey + 0.1*\squarey);
}

\foreach \y in {10.4,10.5 , 10.6, ..., 10.9} {
    \draw[draw=black,fill=green!20 ,ultra thin] 
        (\y*\squarex, 0.1*\squarey) rectangle (\y*\squarex+0.1*\squarex, 0.2*\squarey);
}

\foreach \y in {-0.5, -0.4, -0.3, ..., 0} {
    \draw[draw=black,fill=green!20, ultra thin] 
        (10.4*\squarex, \y*\squarey) rectangle (10.5*\squarex, \y*\squarey + 0.1*\squarey);
}

\draw[fill=black] (10.95*\squarex + 0*\squarex, 0.55*\squarey) circle (2pt);
\node at (11*\squarex + 0*\squarex + 0.3, 0.55*\squarey) {$x$};

\node[anchor=north east, scale=1] at (10.4*\squarex, 1.5*\squarey) {\( \mathrm{T}_{s_1, r_1} \)};

\draw[draw=black, fill=red, ultra thin] 
        (10*\squarex, -0.9*\squarey) rectangle (10.1*\squarex, -0.8*\squarey);
\node[anchor=west] at (10.1*\squarex, -0.85*\squarey) {\( = \mathrm{C_i} \)};

\draw[draw=black, fill=green!20, ultra thin] 
        (10.65*\squarex, -0.9*\squarey) rectangle (10.75*\squarex, -0.8*\squarey);
\node[anchor=west] at (10.75*\squarex, -0.85*\squarey) {\( \in \mathrm{V}_1'(\mathrm{C_i}) \)};

\draw[draw=black, fill=black!20, ultra thin] 
        (11.5*\squarex, -0.9*\squarey) rectangle (11.6*\squarex, -0.8*\squarey);
\node[anchor=west] at (11.6*\squarex, -0.85*\squarey) {\( \in \mathcal{L}_X^{\ell} \)};

 \draw[ultra thick,blue] plot [smooth] coordinates {(10.5*\squarex, -1.1*\squarex)  (10.65*\squarex, -1.1*\squarex) };
 \node[anchor=west] at (10.65*\squarex, -1.1*\squarey) {\( = \mathcal{V}_x^1 \)};

 \draw[ultra thick,red] plot [smooth] coordinates {(11*\squarex, -1.1*\squarex)  (11.15*\squarex, -1.1*\squarex) };
 \node[anchor=west] at (11.15*\squarex, -1.1*\squarey) {\( = J^{\ell}_{X} \)};

\foreach \y in {-0.3, -0.2, -0.1, ..., 0.1} {
    \draw[draw=black,fill=black!20, ultra thin] 
        (10.7*\squarex, \y*\squarey) rectangle (10.7*\squarex + 0.1*\squarey, \y*\squarey + 0.1*\squarey);
}

\foreach \y in {10.7, 10.8, 10.9, ..., 12.3} {
    \draw[draw=black,fill=black!20, ultra thin] 
        (\y*\squarex, -0.3*\squarey) rectangle (\y*\squarex + 0.1*\squarey, -0.3*\squarey + 0.1*\squarey);
}

\foreach \y in {10.7, 10.8, 10.9, ..., 12.3} {
    \draw[draw=black,fill=black!20, ultra thin] 
        (\y*\squarex, -0.3*\squarey) rectangle (\y*\squarex + 0.1*\squarey, -0.3*\squarey + 0.1*\squarey);
}

 \draw[ultra thick,blue] plot [smooth] coordinates {(10.95*\squarex, 0.55*\squarex)  (10.9*\squarex, 0.12*\squarex)  (10.45*\squarex, 0.13*\squarey)  (10.47*\squarex, -0.5*\squarex)};

\draw[ultra thick, red] plot [smooth] coordinates {
    (10.75*\squarex, 0.11*\squarex)
    (10.75*\squarex, -0.23*\squarex)  
};

\draw[ultra thick, red] plot [smooth] coordinates {
    (10.75*\squarex, -0.23*\squarex)  
    (12.5*\squarex, -0.23*\squarex) 
};

\draw[->, thick, bend left=45] 
    (10.5*\squarex,0.4*\squarex) to node[pos=0.2, left] {$\mathrm{C}^1_\ell$} (10.73*\squarex,0.2*\squarex);

\end{tikzpicture}
\captionof{figure}{$\mathcal{V}^1_{x,\ell}$ and $J^{\ell}_X$ have exactly one vertex in common. We use this vertex to concatenate both paths.}
    \label{fig:path_from_random_path_of_cubes}
\end{center}
\end{figure}
    
    We have the following claim that collects properties of $\mathcal{J}_X^{\ell}$:
    \begin{claim}\label{claim:properties}
    \begin{enumerate}
        \item Let $\ell_1 \neq \ell_2$. The set of vertices of $\mathcal{J}^{\ell_1}_X$ that are within good cubes that belong to $\mathcal{L}_X^{\ell_1}$ is disjoint from the set of vertices of $\mathcal{J}_X^{\ell_2}$ that are within the good cubes that belong to $\mathcal{L}_X^{\ell_2}$. If $\mathcal{J}_X^{\ell_1}$ and $\mathcal{J}_X^{\ell_2}$ have common vertices, these are inside of $\mathrm{V}'(\mathrm{C_i})$ or $\mathrm{V}'(\mathrm{C_j})$.  
        \item The number of edges that have some endpoint inside of $\mathrm{V}'(\mathrm{C_i}) \cup \mathrm{V}'(\mathrm{C_j})$  and belong to some path $\mathcal{J}_X^{\ell}$ is $\Theta(\log n)$. 
    \end{enumerate}
\end{claim}
\textit{Proof of claim.}
    Recall that if $\ell_1 \neq \ell_2$, then $\mathcal{L}_X^{\ell_1}$ does not intersect $\mathcal{L}_X^{\ell_2}$, and as $\mathcal{J}^{\ell}_X$ is totally contained within $\mathcal{L}_X^{\ell} \cup \mathrm{V}'(\mathrm{C_i}) \cup \mathrm{V}'(\mathrm{C_j})$, part (a) follows.
    
    Regarding item (b), recall that $\mathrm{V}'(\mathrm{C_i})$ has logarithmic length in the renormalized lattice where we see cubes as points of $\mathbb{Z}^d$. Since the path $\mathcal{V}^1_{x,\ell}$ is chosen to be an induced path it has a logarithmic number of edges. Note that the other edges of $\mathcal{J}_X^{\ell}$ with at least one endpoint that is inside of $\mathrm{V}'(\mathrm{C_i})$ must have this endpoint inside of $\mathrm{C}^1_\ell$. Since the path is induced and the cube $\mathrm{C}^1_\ell$ has constant volume by our assumption on $r$, the number of edges from some realization $\mathcal{J}_X^{\ell}$ and with at least one endpoint that is inside of $\mathrm{C}^1_\ell$ is $O(1)$. Finally, since there is a logarithmic number of such cubes $\mathrm{C}^1_\ell$ (one for each $\ell$) the number of such vertices is logarithmic. The same is true for $\mathrm{V}'(\mathrm{C_j})$ and we obtain property (b). The claim follows.\qed

    We continue with the proof of the theorem. To this end, let $Y$ be a discrete uniform random variable with values in $\{1,...,\lfloor c\log n\rfloor\}$ and independent of $X$. Define the random path $\mathcal{J}_{X}^{Y}$ and for any $u, v \in \cG_n$, we define the function
    \[\psi(u,v)=\begin{cases}
    \text{1}, & \text{if $\{u,v\} \in \mathcal{J}_X^Y$ and the path visits $u$ first, }\\
    \text{-1}, & \text{if $\{u,v\} \in \mathcal{J}_{X}^Y$ and the path visits $v$ first,}\\
    \text{0}, & \text{otherwise.}
    \end{cases}\]
    We define a unit flow $\theta$ between $x$ and $y$ by setting (to see that this is indeed a flow, see the discussion just before \cite[Proposition 3.2]{LyonsPeres2016})
    \begin{equation}\label{eq:definition_flow}
        \theta(u,v)=\mathbb{E}_{X,Y}(\psi(u,v))
    \end{equation}
    for any $u,v\in \mathcal G_n$.
    Note that 
    \begin{equation}
    |\theta(u,v)|\leq \mathbb{P}_{X,Y}(\{u,v\}\in \mathcal{J}_{X}^{Y})=:p(u,v).
    \end{equation}
     We next group the parallelepipeds $\mathrm{T}$ that contain some realization of $\mathcal{J}^{Y}_X$ according to their level $t$ as follows (see Figure~\ref{fig:supercubes_levels}):
    \[D_t=\begin{cases}
    \{\mathrm{T}_{i,j}:i= s_1+t, \, r_1\leq j\leq r_1+t\}, & \text{if $0\leq t\leq \frac{s_2-s_1+1}{2}$, }\\
    \{\mathrm{T}_{i,j}:i=s_1+t,\, r_1\leq j\leq r_1+s_2-s_1-t-1\}, & \text{if $\frac{s_2-s_1+1}{2}+1$} \leq t \leq s_2-s_1+1.
    \end{cases}\]
\begin{figure}[htb]
    \begin{minipage}{0.5\textwidth}
\begin{tikzpicture}[scale=0.8]
\def\squarex{1} 
\def\squarey{1} 
\draw[draw=transparent, fill=red!30, thin] (3*\squarex, 0*\squarey) rectangle (4*\squarex, 1*\squarey); 
\draw[draw=transparent, fill=red!30, thin] (3*\squarex, 1*\squarey) rectangle (4*\squarex, 2*\squarey);
\draw[draw=transparent, fill=red!30, thin] (3*\squarex, 2*\squarey) rectangle (4*\squarex, 3*\squarey);
\draw[draw=transparent, fill=red!30, thin] (3*\squarex, 3*\squarey) rectangle (4*\squarex, 4*\squarey);
\def\middle{4} 

\coordinate (start_x) at (0, 0);

\foreach \level in {0,...,4} {
    \foreach \i in {0,...,\level} {
        \draw[thick] (\level*\squarex, \i*\squarey) rectangle (\level*\squarex + \squarex, \i*\squarey + \squarey);
    }
}
\draw[thick] (5*\squarex, 0*\squarey) rectangle (5*\squarex + \squarex, 0*\squarey + \squarey);
\draw[thick] (5*\squarex, 1*\squarey) rectangle (5*\squarex + \squarex, 1*\squarey + \squarey);
\draw[thick] (5*\squarex, 2*\squarey) rectangle (5*\squarex + \squarex, 2*\squarey + \squarey);
\draw[thick] (5*\squarex, 3*\squarey) rectangle (5*\squarex + \squarex, 3*\squarey + \squarey);
\draw[thick] (5*\squarex, 4*\squarey) rectangle (5*\squarex + \squarex, 4*\squarey + \squarey);

\draw[thick] (6*\squarex, 0*\squarey) rectangle (6*\squarex + \squarex, 0*\squarey + \squarey);
\draw[thick] (6*\squarex, 1*\squarey) rectangle (6*\squarex + \squarex, 1*\squarey + \squarey);
\draw[thick] (6*\squarex, 2*\squarey) rectangle (6*\squarex + \squarex, 2*\squarey + \squarey);
\draw[thick] (6*\squarex, 3*\squarey) rectangle (6*\squarex + \squarex, 3*\squarey + \squarey);

\draw[thick] (7*\squarex, 0*\squarey) rectangle (7*\squarex + \squarex, 0*\squarey + \squarey);
\draw[thick] (7*\squarex, 1*\squarey) rectangle (7*\squarex + \squarex, 1*\squarey + \squarey);
\draw[thick] (7*\squarex, 2*\squarey) rectangle (7*\squarex + \squarex, 2*\squarey + \squarey);

\draw[thick] (8*\squarex, 0*\squarey) rectangle (8*\squarex + \squarex, 0*\squarey + \squarey);
\draw[thick] (8*\squarex, 1*\squarey) rectangle (8*\squarex + \squarex, 1*\squarey + \squarey);

\draw[thick] (9*\squarex, 0*\squarey) rectangle (9*\squarex + \squarex, 0*\squarey + \squarey);

\coordinate (red_node) at (5*\squarex, 2.5);
\node[red, circle, fill=red, scale=0.5, label=above:{$y_X$}] at (red_node) {}; 

\coordinate (red_node_second) at (3*\squarex, 1.5);
\node[red, circle, fill=red, scale=0.5, label=above:{$y_X^t$}] at (red_node_second) {};

\draw[red, thick] (red_node) -- (0,0); 

\draw[blue, thick, dotted] (4.9, 2.5) rectangle (5.1, 0);
\draw[blue, thick, dotted] (2.9, 1.5) rectangle (3.1, 0);
\coordinate (A1) at (0, 0);
\coordinate (B1) at (\squarex, 0);
\coordinate (C1) at (\squarex, \squarey);
\coordinate (D1) at (0, \squarey);
\node[anchor=north west, scale=0.7] at (D1) {$\mathrm{T}_{s_1,r_1}$};
\coordinate (A2) at (4*\squarex, 0);
\coordinate (B2) at (4*\squarex + \squarex, 0);
\coordinate (C2) at (4*\squarex + \squarex, \squarey);
\coordinate (D2) at (4*\squarex, \squarey);
\coordinate (A3) at (8*\squarex, 0);
\coordinate (B3) at (9*\squarex + \squarex, 0);
\coordinate (C3) at (8*\squarex + \squarex, \squarey);
\coordinate (D3) at (9*\squarex, \squarey);

\node[anchor=north west, scale=0.7] at (D3) {$\mathrm{T}_{s_2,r_2}$};
\coordinate (red_node) at (3.5,-1);
\node[red, circle, scale=0.5, label=above:{$\textbf{Level t}$}] at (red_node) {}; 
\draw[->, black, thick] (red_node) -- ++(0, 1); 
\coordinate (A) at (-0.2,-0.1);
\coordinate (B) at (2.8,-0.1);
\coordinate (C) at (4.8,-0.1);
\node[anchor=north west, scale=0.7] at (A) {A};
\node[anchor=north west, scale=0.7] at (B) {B};
\node[anchor=north west, scale=0.7] at (C) {C};

\coordinate (cubo_T) at (3.35,1.45);
\node[anchor=north west, scale=0.7] at (cubo_T) {$\mathrm{T}_{i,j}$};

\end{tikzpicture}

\captionof{figure}{Parallelepipeds at level $t$ are in red (i.e., their projections onto the first $2$ dimensions). See the projected probability over the right line of $D_t$ inside the blue dotted lines.}
    \label{fig:supercubes_levels}
\end{minipage}%
\begin{minipage}{0.5\textwidth}
\begin{center}
\begin{tikzpicture}[scale=0.5]
\draw[thick] (0, 0) rectangle (10, 10);
   \draw[thick] (8, 0) rectangle (8.5, 10);
   \draw[thick] (0, 2) rectangle (10, 2.5);
   \node[red, circle, fill=red, scale=0.6, label=right:{$x$}] at (2,2.25) {};
   \node[red, circle, fill=red, scale=0.6, label=right:{$y$}] at (8.25,9) {};
   \node[red, circle, fill=red, scale=0.6, label=right:{$z$}] at (8.25,2.2) {};
\end{tikzpicture}
\end{center}
\captionof{figure}{Using triangle inequality for vertices in different strips and in the same 2-dimensional slice.}
    \label{fig:resistance_different_strips}
\end{minipage}
\end{figure}
    Now, we are ready to bound the effective resistance between $x,y$. Recall that $\mathfrak{r}(e)=1$, and so, since $\theta^2(u,v) \le p^2(u,v)$, by Definition~\ref{def:effectiveresistance}, in order to bound $\mathcal{R}(x\longleftrightarrow y)$, it suffices to bound $\sum_{e \in E} p^2(e)$. More precisely, we may split the edge set according to levels of parallelepipeds and get:
    \begin{equation}\label{eq:upper_bound_formula_cubes}
        \mathcal{R}(x\longleftrightarrow y) \leq \sum_{u \in \mathrm{V}'(\mathrm{C_i}) \cup \mathrm{V}'(\mathrm{C_j})}
        \sum_{\substack{v \in L_1\\v\sim u}} p^2 (u,v) + \sum_{t}\sum_{\mathrm{T}_{i,j}\in D_t} \sum_{\substack{u \in \mathrm{T}_{i,j} \\ u \notin \mathrm{V}'(\mathrm{C_i}) \cup \mathrm{V}'(\mathrm{C_j})}} \sum_{\substack{v \in L_1\\v\sim u}} p^2(u,v). 
    \end{equation}

    The first double sum on the right hand side can be bounded by $O (\log n)$, as the number of edges that have positive probability of belonging to the path is $\Theta (\log n)$ (see item b) of Claim~\ref{claim:properties}). 
    
    Consider then the remaining term. Let $u\in \mathrm{T}_{i,j}$ with $\mathrm{T}_{i,j}\in D_t$ and $\{u,v\} \in E$ and $u \notin \mathrm{V}'(\mathrm{C_i}) \cup \mathrm{V}'(\mathrm{C_j})$. We now claim that the probability that $\mathrm{T}_{i,j}\in D_t$, with $1\leq t\leq \frac{s_2-s_1+1}{2}$, is part of the path of parallelepipeds $\mathcal{L}_X$ is bounded above by $\frac{c'}{t}$ for some constant $c'$ that can be chosen independent of $n$ and $t$ (for $t=0$ we bound this probability by 1). To show this, denote by $y_X^t$ the intersection of the segment between the left bottom corner point of $\mathrm{T}_{s_1,r_1}$ and $y_X$, and the left line of level $t$ (see Figure ~\ref{fig:supercubes_levels}). By similarity of triangles, $y_X^t$ is uniform over the left line of level $t$ (see for example the triangles $Ay^t_X B$ and $Ay_X C$ in Figure~\ref{fig:supercubes_levels}). Now, observe that in order for $\mathrm{T}_{i,j} \in D_t$ to be part of $\mathcal{L}_X$ the random point $y_X^t$ has to belong to the left face of $\mathrm{T}_{i,j}$ or to $\mathrm{T}_{i,j-1}$ (the left face of the parallelepiped immediately below, in case such parallelepiped exists): indeed, if $y_X^t$ were above $\mathrm{T}_{i,j}$, then the parallelepiped does not belong to the path, and if $y^{t}_X$ were below $\mathrm{T}_{i,j-1}$, as the angle of the red line is at most $\pi/4$, the red line in Figure~\ref{fig:supercubes_levels} does not intersect $\mathrm{T}_{i,j}$. Therefore, the probability that $\mathrm{T}_{i,j} \in D_t$ belongs to $\mathcal{L}_X$ is bounded from above by the probability that $y_X^t$ belongs to the left faces of two cubes at level $t$. Since $y_X^t$ is uniform in its range, this probability is at most $\frac{2}{t}$. By symmetry, the same is true for $ \frac{s_1-s_1+1}{2}+1 \le t \le s_2-s_1$ (for $t=s_2-s_1+1$ we bound this probability by $1$); in this case we obtain an upper bound of $\frac{2}{s_2-s_1-t-1}$ for the probability that $\mathrm{T}_{i,j} \in D_t$.

    Consider a fixed realization of $\mathcal{L}_X$,
    and suppose first that $\{u,v\} \in E$ such that $u$ belongs to some parallelepiped $\mathrm{T}_{i,j}$ of $\mathcal{L}_X$. We may also assume that the vertices $u,v$ do not belong to the good cubes of $\mathrm{V}'(\mathrm{C_i}) \cup \mathrm{V}'(\mathrm{C_j}),$ as otherwise the contribution is already counted in the first double sum. 
    By independence of $X,Y$, and since by Claim~\ref{claim:properties} part a), $\{u,v\}$ belongs to one unique path $\mathcal{J}_X^Y$, and as the path is chosen uniformly from a set of $\lfloor c\log n\rfloor$ possible realizations, we have 
    \begin{equation}
        \mathbb{P}_{X,Y} (\{u,v\} \in \mathcal{J}^{Y}_{X}|X) \leq \frac{1}{\lfloor c\log n \rfloor}. 
    \end{equation}
    Furthermore, recall that $u$ belongs to the parallelepiped of $\mathrm{T}_{i,j} \in D_t$. Thus, by the above, since the choice of the parallelepiped in $D_t$ (with $1\leq t\leq \frac{s_2-s_1+1}{2}$) and the choice of the crossing are independent, $p(u,v) \le \frac{c''}{t \log n}$ for some $c'' > 0$. The same occurs by symmetry for the other levels.

    Fix then a parallelepiped $\mathrm{T}$ that belongs to some level $t$. Recall that the volume of the parallelepiped $\mathrm{T}$ is $\Theta(\log^2 n)$, since $r$ is constant in this section. Hence, the number of edges that have at least one endpoint inside of $\mathrm{T}$ is by Lemma~\ref{lem:edgedeviation} a.a.s.\ $O(\log^2 n)$. We substitute these observations into the right hand expression of~\eqref{eq:upper_bound_formula_cubes} and obtain  
\begin{equation}
\begin{aligned}
\sum_{1 \leq t \leq \frac{s_2-s_1+1}{2}} \sum_{\mathrm{T}_{i,j} \in D_t} \sum_{\substack{u \in \mathrm{T}_{i,j} \\ u \notin \mathrm{V}'(\mathrm{C_i}) \cup \mathrm{V}'(\mathrm{C_j})}} \sum_{v \in L_1} p^2(u,v) 
&
\leq
\sum_{1 \leq t \leq \frac{s_2-s_1+1}{2}} \sum_{\mathrm{T}_{i,j} \in D_t} \sum_{\substack{u \in \mathrm{T}_{i,j} \\ u \notin \mathrm{V}'(\mathrm{C_i}) \cup \mathrm{V}'(\mathrm{C_j})}} \sum_{v \in L_1} \frac{c'''}{t^2\log^2 n} \\
&
\leq
\sum_{1 \leq t \leq \frac{s_2-s_1+1}{2}} \sum_{\mathrm{T}_{i,j} \in D_t} \log^2 n \frac{c''''}{t^2\log^2 n} \\
&
\leq 
\sum_{1 \leq t \leq \frac{s_2-s_1+1}{2}} (t+1) \log^2 n \frac{c''''}{t^2\log^2 n} \\
&= \sum_{1 \leq t \leq \frac{s_2-s_1+1}{2}} O\left(\frac{1}{t}\right) 
= O(\log n).
\end{aligned}
\end{equation}
It is clear that the contribution of $D_0$ to the inequality is $O(1)$. By symmetry around $y_X$, one obtains the same bound for the remaining levels $\frac{s_2-s_1+1}{2}<t\leq s_2-s_1+1$. Then $\mathcal{R}(x\longleftrightarrow y)=O(\log n)$.

We now finally lift the assumptions on the parallelepipeds, namely that $r_1=r_2$ and that $s_2-s_1$ is odd, plus the fact that neither $\mathrm {C_i},\mathrm {C_j}$ belong to the boundary: consider first two vertices $x,y$ of $L_1$ that are in the same strip of a 2-dimensional slice, but now separated by an odd number of parallelepipeds that are not at the boundary. Let $x\in \mathrm{T}_{s_1',r_1}$ and $y\in \mathrm{T}_{s_2',r_1}$ with $s_2'-s_1'$ even such that neither parallelepiped is at the boundary. By the construction given above there exists $x' \in L_1$ with $x' \in \mathrm{T}_{s_1'+1,r_1}$ such that $\mathcal{R}(x'\longleftrightarrow y)=O(\log n)$. Note that $d_E(x,x')=O(\log n)$, since $x$ and $x'$ are in adjacent parallelepipeds, and any parallelepiped has logarithmic side length in the first 2 dimensions and fixed side length in the other dimensions. By Corollary ~\ref{cor:chemical_distance_rgg}, a.a.s., the graph distance between $x$ and $x'$ is then also $O(\log n)$, and thus a.a.s.\ $\mathcal{R}(x\longleftrightarrow x') = O(\log n)$. Then, by application of the triangle inequality (see Lemma~\ref{lem:triangleinequality}), also a.a.s. $\mathcal{R} (x \longleftrightarrow y) = O(\log n)$.  
Also, since any two vertices in the same parallelepiped (not necessarily in good cubes, but still in $L_1$) satisfy that by Corollary~\ref{cor:chemical_distance_rgg}, their graph distance is a.a.s.\ $O(\log n)$, by another application of the triangle inequality, also a.a.s.\ the effective resistance between them is $O(\log n)$. Hence, for any $x',y' \in L_1$ belonging to the largest connected component of the renormalized graph inside the same strip of a $2$-dimensional slice not at the boundary, we have
\begin{equation}
    \mathcal{R}(x \longleftrightarrow y) = O (\log n).
\end{equation}
Using the same idea, we may extend our bounds to vertices in parallelepipeds of a strip on the boundary of the 2-dimensional slice, since for these vertices there exists a vertex at graph distance $O(\log n)$ in a parallelepiped not at the boundary.

    We now extend the result to any two vertices $x,y$ of $L_1$ in the same 2-dimensional slice, but possibly in different strips: to do so, suppose that $x,y \in \mathcal{K}^{2}_n$. Let $x \in \mathrm{H}_r$ ($x$ is in the $r$-the horizontal strip) and let $y \in \mathrm{V}_s$ ($y$ in the $s$-th vertical strip). Note that this election is always possible,  as any vertex belongs to one vertical strip and one horizontal strip. Pick $z \in L_1$ such that $z \in \mathrm{V}_s \cap \mathrm{H}_r = \mathrm{T}_{s,r}$ (see Figure ~\ref{fig:resistance_different_strips}). Note that such $z$ must exist as $\mathrm{T}_{s,r}$ is the intersection of two strips that contain long crossings of good cubes, which at the same time contain long crossings of $L_1$. As $x$ and $y$ are in the same strip, by the above, a.a.s.\ $\mathcal{R}(x\longleftrightarrow z) = O (\log n)$. Similarly, a.a.s.\ $\mathcal{R}(y\longleftrightarrow z) = O (\log n)$, and applying again the triangle inequality, the result holds as well.

    Now, in general, suppose that there is no 2-dimensional slice containing $x, y$; while always a plane containing $x,y$ exists, we cannot ensure the existence of a slice, which by our definition is assumed to be axis-parallel. Now, in case two different slices have non-empty intersection, this intersection is a line of cubes such that only one coordinate is not fixed. In this case, as before, there exists at least a vertex $z \in \cG_n$ that belongs to $L_1$ and that is in the intersection of both slices. Then, for any two vertices $x',y'$ such that $x'$ is in the first slice and $y'$ is in the second slice, by the above, a.a.s.\ $\mathcal{R}(x' \longleftrightarrow z)=O(\log n)$ and  $\mathcal{R}(y'\longleftrightarrow z)=O(\log n)$, and thus, by the triangle inequality, also $\mathcal{R}(x'\longleftrightarrow y')=O(\log n)$. To conclude, observe that for any two vertices $x,y \in L_1$, there exists a sequence of slices $(\mathcal{K}_n^2)_1,...,(\mathcal{K}_n^2)_s$, with $s \le d-1$, such that for any $1 \le i \le s-1$,  $(\mathcal{K}_n^2)_s$ has non-empty intersection with $(\mathcal{K}_n^2)_{s+1}$, and moreover, $x \in (\mathcal{K}_n^2)_1$, and $y \in (\mathcal{K}_n^2)_s$. By applying the triangle inequality iteratively, we then have a.a.s.\ $\mathcal{R}(x\longleftrightarrow y)=O(\log n)$.
    
    By this result, together with Lemma~\ref{lem:sizergg}, we then immediately conclude that a.a.s.
    \begin{equation}
        \tau_{cov} \leq \gamma n \log^2 n,
    \end{equation}
    for some $\gamma(d,r) > 0$ (uniformly bounded from below), and the theorem follows.
\end{proof} 

\section{Upper bound for non-fixed radius}\label{sec:upper_bound_covTime_rgg_nonFixed_r}
In this section we prove the upper bound for RGGs with radii tending to infinity but below the threshold of connectivity. We begin by noting that the proof given in Section~\ref{sec:upperboundfixedradius} is not sufficient, as it gives as an upper bound on the resistance of $O(\log n)$, whereas we need an upper bound of magnitude $O(\log(n)/r^d)$. With this in mind, we therefore introduce a refined construction that allows for a better flow dissipation. This structure will then also be used in the proof of the upper bound right above the connectivity threshold.

\subsection{Resistance bound in the backbone of dense cubes}\label{subsec:nonfixed_r_upper_bound_backbone}
Recall the definition of $\mu$ introduced in the tessellation of cubes $(\mathrm{Q_i})_i$ in Section $\ref{sec:lower_bound_fixed_radius}$ of cubes of side length $r/\mu$. In this section, we choose $\mu(n) =\mu +O(1/n)$ to be the smallest real number larger or equal to $\mu$ such that any cube $\mathrm{Q}$ is either completely contained in $\Lambda_n$ or it shares lower-dimensional faces with the boundary of $\Lambda_n$. With abuse of notation, we denote below $\mu(n)=\mu$. Let $\beta$ be a sufficiently small positive constant. We say that $\mathrm{Q_i}$ is $\emph{dense}$ if it has at least $\beta r^d$ vertices of $\cG_n$, otherwise we say that it is $\emph{non-dense}$. 
 
    We see cubes $\mathrm{Q}$ as sites of $\mathbb{Z}^d$. By Chernoff's inequality, the probability that a cube is non-dense is bounded from above by $e^{-\kappa r^d}$ for some $\kappa=\kappa(\beta)>0$ (assuming $\beta$ small enough). We note that $\kappa$ is a fixed constant and does not depend on $r$. Considering dense cubes as occupied sites of a Bernoulli site percolation model, for $r \ge r_0$ with $r_0$ large enough, we are thus in the supercritical phase of Bernoulli site percolation. That is, a.a.s., there exists a giant component corresponding to dense cubes. We denote by $\mathcal{C}^{\text{site}}_{\mathrm{Q}}(n)$ the giant (largest) cluster of the associated Bernoulli process of dense cubes $\mathrm{Q}$. We prove the following regarding the maximum of the effective resistance in the giant cluster:
    \begin{theorem}\label{thm:nonfixed_r_upper_bound_backbone}
        Let $r \ge r_0$ for a sufficiently large constant $r_0 > 0$ and let also $r^d\le C\log n$ for some $C > 0$. 
        Then there is $C'=C'(d,r_0,C)$, such that a.a.s., for every $x,y \in  \cG_n$ such that $x \in \mathrm{Q_i}$ and $y \in \mathrm{Q_j}$ and $\mathrm{Q_i},\mathrm{Q_j}\in \mathcal{C}^{\text{site}}_{\mathrm{Q}}(n)$, it holds that 
        \[
        \mathcal{R}(x\longleftrightarrow y) = C' \frac{\log n}{r^{2d}}.
        \]
    \end{theorem}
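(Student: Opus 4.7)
The plan is to adapt the random-path construction of Subsection~\ref{subsection:random_path_good_cubes} to dense $\mathrm{Q}$-cubes, and on top of it introduce one additional averaging step — over the $\Theta(r^d)$ vertices inside each cube — that was unavailable for constant $r$. This extra averaging shrinks the squared flow on every edge by a factor $1/r^{2d}$, exactly cancelling the $r^{2d}$ blow-up in the edge count per parallelepiped that appears once the typical degree $\Theta(r^d)$ grows with $n$, and thereby delivering the claimed $O(\log n/r^{2d})$ bound.

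Concretely, I would first set up the dense-cube backbone. Since the probability that a $\mathrm{Q}$-cube fails to be dense is $e^{-\Omega(r^d)}$ by Chernoff, for $r\ge r_0$ the denseness indicator stochastically dominates a Bernoulli site field with parameter $p$ arbitrarily close to $1$; the Liggett–Schonmann–Stacey domination used in Section~\ref{sec:renormalization} together with Theorem~11.1 of~\cite{kesten} then yields, a.a.s., at least $\lfloor c\log n\rfloor$ disjoint long crossings of dense cubes in every $\alpha$-logarithmic strip of every $2$-dimensional slice, an exact analog of Lemma~\ref{lem:crossingsstrips}. Running the construction of Subsection~\ref{subsection:random_path_good_cubes} verbatim with "dense" in place of "good" produces random backbone paths $\mathcal L_X^\ell$, $1\le\ell\le\lfloor c\log n\rfloor$, between the parallelepipeds of $\mathrm{Q_i}$ and $\mathrm{Q_j}$, with the same level-$t$ visit probability $O(1/t)$. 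The flow is then built as follows: conditionally on $(X,\ell)$, choose independently and uniformly a vertex $V_\mathrm{Q}$ inside each dense cube $\mathrm{Q}$ of $\mathcal L_X^\ell$, and let $\mathcal J_X^{\ell,V}$ be the $G$-path that starts at $x$, jumps inside the clique $\mathrm{Q_i}$ to a uniform $U\neq x$, follows the $V_\mathrm{Q}$'s in order (consecutive chosen vertices are adjacent in $G$ because $\mu$ forces adjacent dense cubes to be completely bipartite), and finishes by jumping from a uniform $U'\neq y$ in $\mathrm{Q_j}$ to $y$. Set $\theta(u,v)=\mathbb E[\psi(u,v)]$ with $\psi$ the signed path-indicator, as in Section~\ref{sec:upper_bound_cov_rgg_fixed_r}.

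For the energy estimate, for an edge $\{u,v\}$ between two adjacent dense cubes $\mathrm{Q},\mathrm{Q}'$ distinct from $\mathrm{Q_i},\mathrm{Q_j}$, independence of the vertex choices gives
\begin{equation*}
p(u,v)\le \mathbb P\bigl(\mathrm{Q},\mathrm{Q}'\text{ consecutive on }\mathcal L_X^\ell\bigr)\cdot\frac{1}{|\mathrm{Q}|\,|\mathrm{Q}'|}=O\!\left(\frac{1}{t\log n}\right)\cdot O\!\left(\frac{1}{r^{2d}}\right),
\end{equation*}
where $t$ is the level of the parallelepiped containing $\mathrm{Q}$. A standard concentration analog of Corollary~\ref{cor:edgedeviation} (the expected edge count per parallelepiped is $\Theta(r^{2d}\log^2 n)$) bounds the number of edges per parallelepiped a.a.s.\ by $O(r^{2d}\log^2 n)$. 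Multiplying, level $t$ contributes $O\bigl(t\cdot r^{2d}\log^2 n\cdot 1/(t^2r^{4d}\log^2 n)\bigr)=O(1/(t\,r^{2d}))$, which sums to $O(\log n/r^{2d})$ over the $O(\log n)$ levels. The two endpoint cubes contribute only $O(1/r^d)$ each (the flow out of $x$ is split equally over $\Theta(r^d)$ clique-edges of $\mathrm{Q_i}$, carrying $O(1/r^d)$ apiece), and since $r=O(r_c)$ forces $r^d=O(\log n)$ this endpoint cost is absorbed into the main term.

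Finally I would extend the bound to arbitrary $x,y$ in dense cubes of the giant cluster by the usual slices/strips argument at the end of Section~\ref{sec:upper_bound_cov_rgg_fixed_r}, using Lemma~\ref{lem:triangleinequality} together with the chemical-distance estimate Corollary~\ref{cor:chemical_distance_rgg} to pay only $O(\log n/r^d)$ for the short local legs inside endpoint parallelepipeds. The main obstacle is the combinatorial accounting in the middle paragraph: one must check that the vertex averaging remains genuinely uniform and independent of the backbone randomness even when different values of $\ell$ cause the random backbone to revisit the same cube, and one must handle carefully the edges incident to the endpoint cubes $\mathrm{Q_i},\mathrm{Q_j}$ so that no single such edge picks up an $\omega(1/r^d)$ spike in probability. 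Once these points are clean the $r^{2d}$ gain is produced by a single line of the energy sum.
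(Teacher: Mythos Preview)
Your main construction is essentially the paper's: replace good $\mathrm{C}$-cubes by dense $\mathrm{Q}$-cubes, run the machinery of Subsection~\ref{subsection:random_path_good_cubes}, and superimpose a uniform vertex choice inside each cube to gain the extra $1/r^{2d}$. The energy bookkeeping you sketch matches the paper's proof line by line, and the obstacle you flag (cubes of $\mathrm{V}'(\mathrm{Q_i})\cup\mathrm{V}'(\mathrm{Q_j})$ shared across different $\ell$) is handled there exactly as you anticipate: those edges are separated out, their count is $O(r^{2d}\log n)$, and their $p$-value is $O(1/r^{2d})$ even without the $1/\log n$ factor from $Y$, giving contribution $O(\log n/r^{2d})$.

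There is, however, a genuine gap in your extension step. You propose to pay $O(\log n/r^d)$ for the short local legs via Corollary~\ref{cor:chemical_distance_rgg}; but that corollary controls only graph distance in $G$, and the trivial bound $\mathcal{R}\le d_G$ then yields $O(\log n/r^d)$, which by the triangle inequality swamps the main $O(\log n/r^{2d})$ term and proves only the weaker statement. The paper avoids this by noting that the local legs themselves run through a path of dense cubes of length $O(\log n)$ in the \emph{renormalized} lattice (via Theorem~\ref{them:chemical_distance_percolation} applied to the dense-cube percolation), and then reuses the flow-splitting trick on that short path: send $1/(\lfloor\beta r^d\rfloor-1)$ from $x'$ to each selected vertex in its cube, distribute uniformly to the selected vertices of the next cube, and so on down the path. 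This gives energy $O(\log n)\cdot O(r^{2d})\cdot O(1/r^{4d})=O(\log n/r^{2d})$ for each local leg, and only then does the triangle-inequality patching preserve the sharp exponent. Without this refinement your argument establishes the theorem only with $r^d$ in place of $r^{2d}$, which, while enough for Lemma~\ref{lem:upper_bound_nonfixedr_second_case}, would not suffice for the application in Section~\ref{sec:upper_bound_above_connectivity} where $r^d=\Theta(\log n)$ and one needs $\mathcal{R}=O(1/\log n)$.
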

    \begin{proof}
    First, we may choose $r_0$ large enough such that $\beta r^d$ is at least 3 and that the dense cubes dominate a supercritical Bernoulli site percolation process with a parameter that only depends on $r_0$.
        Divide $\Lambda_n$ into disjoint $\alpha$-logarithmic strips of cubes $\mathrm{Q_i}$. We use the structure introduced in Section~\ref{sec:randompaths}, with the difference that we consider cubes $\mathrm{Q_i}$ instead of cubes $\mathrm{C_i}$, and dense cubes play the role of good cubes. 

        In the following we fix two vertices $x,y$ belonging to a fixed $2$-dimensional slice (we generalize this later). Analogously as before, define $\mathcal{Q}^{Y}_X$,  where $Y$ is a uniform discrete random variable independent from $X$ with values in $\{1,\dots, \lfloor c \log n\rfloor\}$ (see the construction of Section~\ref{sec:randompaths} for the definition of $c$ and Theorem~\ref{thm:upperboundcov_fixed_r} for details of the construction of $\mathcal{Q}^{\ell}_X$ with a fixed value $\ell$ of $Y$): note that $\mathcal{Q}^{\ell}_X$ is a random path of dense cubes between two fixed dense cubes $\mathrm{Q_i}$ and $\mathrm{Q_j}$, such that both belong to the giant component of dense cubes in $\mathcal{C}^{\text{site}}_{\mathrm{Q}}(n)$. As in Section~\ref{sec:randompaths}, we may assume that the two dense cubes belong to some parallelepipeds $\mathrm{Q_i} \in \mathrm{T}_{s_1,r_1}$ and $\mathrm{Q_j} \in \mathrm{T}_{s_2,r_2}$ such that $r_1=r_2$, $s_2>s_1$, $s_2-s_1$ is odd, and that both parallelepipeds do not belong to a strip that is at the boundary of the 2-dimensional slice. Pick vertices $x,y$ of $\cG_n$ such that $x\in \mathrm{Q_i}$ and $y\in \mathrm{Q_j}$. For each dense cube that is part of some realization of $\mathcal{Q}^{Y}_X$ select $\lfloor \beta r^d \rfloor$ vertices, except for $\mathrm{Q_i}$ and $\mathrm{Q_j}$ where we may choose $\lfloor \beta r^d \rfloor - 1$ vertices distinct from $x,y$ (or $\lfloor \beta r^d \rfloor-2$ in case $\mathrm{Q_i}=\mathrm{Q_j}$). 
    
    Conditionally on $X$ and $Y$, we construct a random path of vertices in $\cG_n$  that starts at $x$, finishes at $y$, and is entirely contained within the path of dense cubes $\mathcal{Q}_X^Y$ as follows. First, select uniformly at random and independently from $X,Y$ one of the $\lfloor \beta r^d\rfloor-1$  vertices that belong to the dense cube $\mathrm{Q_i}$ (one out of the $\lfloor \beta r^d \rfloor-1$ vertices previously selected), and connect it by an edge to the vertex $x$: this is the first edge of our path. For subsequent steps we proceed similarly: suppose that the previously selected vertex belongs to the dense cube $\mathrm{Q}_s$. We then choose uniformly and independently at random from the $\lfloor \beta r^d\rfloor$ vertices that belong to the dense cube that follows in the path $\mathcal{Q}^{Y}_X$. We continue, until the last selected vertex belongs to the cube just before $\mathrm{Q_j}$. In this case we select uniformly at random between the $\lfloor \beta r^d\rfloor -1$ selected vertices in $\mathrm{Q_j}$ and finally connect all these last vertices by an edge to $y$ (if we had $\mathrm{Q_i}=\mathrm{Q_j}$, then we would pick $\lfloor \beta r^d \rfloor-2$ vertices inside the cube and form paths of length $2$ from $x$ to $y$ through these vertices). We denote this random path of $\cG_n$ by $\mathcal{J}_{X,Z}^Y$ and denote its law by $\mathbb{P}_{X,Y,Z}$, where $Z$ denotes the randomness that comes from the path selection conditioned on $X,Y$. Observe that the number of edges that have some endpoint inside of $\mathrm{V}'(\mathrm{Q_i})\cup \mathrm{V}'(\mathrm{Q_j})$ is $O(r^{2d} \log n)$, since $\mathrm{V}'(\mathrm{Q_i})$ (see also Section ~\ref{sec:randompaths}) is made of $\Theta(\log n)$ dense cubes, each of which has $O(r^d)$ selected vertices, and any such vertex has $\Theta(r^{d})$ neighbors among selected vertices inside its own cube and inside a constant number of cubes around it.   
Since between any two consecutive dense cubes there are $\Theta(r^{2d})$ edges, and we choose uniformly at random one of them, we obtain in the case $u \neq x, u \neq y, v \neq x, v \neq y$, $u\neq v$,
    $$
        \mathbb{P}_{X,Y,Z}(\{u,v\} \in \mathcal{J}_{X,Z}^Y|X,Y) =O\left(\frac{1}{r^{2d}}\right), 
   $$ 
whereas in the case $|\{u,v\}\cap \{x,y\}|\in \{1,2\}$, we have
    $$
    \mathbb{P}_{X,Y,Z}(\{u,v\} \in \mathcal{J}_{X,Z}^Y|X,Y) =O\left(\frac{1}{r^d}\right) .
    $$
   As before, denote by $p(u,v)$ the probability that the edge $\{u,v\}$ belongs to the random path $\mathcal{J}_{X,Z}^Y$. Using the random path $\mathcal{J}_{X,Z}^Y$, we define a flow in the same way as in the proof of Theorem ~\ref{thm:upperboundcov_fixed_r} (see \eqref{eq:definition_flow}), and we obtain
    \begin{align}\label{eq:upper_bound_formula_dense_cubes}
\mathcal{R}(x \longleftrightarrow y) \leq 
&\sum_{\substack{v\in L_1\\ y\sim v }} p^2(y, v) +\sum_{\substack{v\in L_1\\ x\sim v }} p^2(x, v) \notag \\
&+ \sum_{\substack{u \in \mathrm{V}'(\mathrm{Q}_i) \cup \mathrm{V}'(\mathrm{Q}_j) \\ u \neq x, u \neq y}} 
\sum_{\substack{v \in L_1 \\ v \neq x, v \neq y \\u\sim v}} p^2(u, v) \notag \\
&+ \sum_{t} 
\sum_{\mathrm{T}_{i,j} \in D_t} 
\sum_{\substack{u \in \mathrm{T}_{i,j} \\ u \notin \mathrm{V}'(\mathrm{Q}_i) \cup \mathrm{V}'(\mathrm{Q}_j) \\ u \neq x, u \neq y}} 
\sum_{\substack{v \in L_1\\ u\sim v\\v \neq x, v \neq y}} p^2(u, v).
\end{align}
    To bound the first two sums,  since $p(x,v)=O\left( \frac{1}{r^d}\right)$, and since $x$ and $y$ send flow to different $O (r^d)$ vertices, we have
    $\sum_{v \in L_1} \big(p^2(y, v) + p^2(x, v)\big)=O\left(1/r^d \right)$.
For the sum in the second row, since the number of edges with at least one endpoint inside of $\mathrm{V}'(\mathrm{Q_i}) \cup \mathrm{V}'(\mathrm{Q_j})$ is $O(r^{2d}\log n)$, and since $p(u,v)=O(1/r^{2d})$,$$
    \sum_{\substack{u \in \mathrm{V}'(\mathrm{Q}_i) \cup \mathrm{V}'(\mathrm{Q}_j) \\ u \neq x, u \neq y\\}} 
\sum_{\substack{v \in L_1 \\ v \neq x, v \neq y\\u \sim v}} p^2(u, v)   
    = O \left(\frac{\log n}{r^{2d}}\right).
    $$
    To bound the term in the last row, recall that, as in the proof of Theorem~\ref{thm:upperboundcov_fixed_r} in Section~\ref{sec:upperboundfixedradius}, the probability that a dense cube inside of some parallelepiped $\mathrm{T} \in D_t$ belongs to $\mathcal{Q}_X^Y$ is $O\left(\frac{1}{t\log n }\right)$ for $1\leq t\leq \frac{s_2-s_1+1}{2}$ and $O\left(\frac{1}{\log n}\right)$ for $t=0$ (if the parallelepiped does not belong to $\mathrm{V}'(\mathrm{Q_i})\cup \mathrm{V}'(\mathrm{Q_j})$),  since there are $\Omega(t)$ parallelepipeds at level $t \ge 1$, and we choose one of the $\Omega(\log n)$ crossings of dense cubes, uniformly at random. Now, observe that we have the additional randomness coming from the fact for any pair of consecutive dense cubes, we choose uniformly at random a vertex inside each cube, giving another factor $O(1/r^{2d})$, as mentioned above. All three choices are independent of each other, and thus,  for edges $\{u,v\}$ appearing in the last term, we have
   $$ 
        p(u,v) = O \left(\frac{1}{t r^{2d}\log n} \right).
    $$

    The number of edges with positive probability $p$ and with at least one endpoint inside any parallelepiped $\mathrm{T}$ is a.a.s. $O(r^{2d}\log^2 n)$, as there are $O(\log^2 n)$ dense cubes inside $\mathrm{T}$ (the volume of $\mathrm{T}$ is $\Theta(r^{d}\log^2 n)$, and each cube has volume $\Theta(r^d)$), each dense cube contains $O(r^d)$ vertices, and the selected vertices are connected by an edge to another $O(r^d)$ selected vertices by construction. Thus
    \begin{equation}
\begin{aligned}
\sum_{1\leq t\leq \frac{s_2-s_1+1}{2}} 
&\sum_{\mathrm{T}_{i,j} \in D_t} 
\sum_{\substack{u \in \mathrm{T}_{i,j} \\ u \notin \mathrm{V}'(\mathrm{Q}_i) \cup \mathrm{V}'(\mathrm{Q}_j)\\ u \neq x, u\neq y}} 
\sum_{\substack{v\in L_1 \\ u\sim v \\ v \neq x, v\neq y}} 
p^2(u,v) \\
&\leq
\sum_{1\leq t\leq \frac{s_2-s_1+1}{2}} 
\sum_{\mathrm{T}_{i,j} \in D_t} 
\sum_{\substack{u \in \mathrm{T}_{i,j} \\ u \notin \mathrm{V}'(\mathrm{Q}_i) \cup \mathrm{V}'(\mathrm{Q}_j)\\ u\neq x,u\neq y}} 
\sum_{\substack{v\in L_1 \\ u\sim v\\ v\neq x,v\neq y}} 
\frac{c'''}{t^2r^{4d}\log^2 n} \\
&\leq 
\sum_{1\leq t\leq \frac{s_2-s_1+1}{2}} 
\sum_{\mathrm{T}_{i,j} \in D_t} 
r^{2d}\log^2 n \frac{c''''}{t^2r^{4d}\log^2 n} \\
&\leq 
\sum_{1\leq t\leq \frac{s_2-s_1+1}{2}} 
(t+1) \frac{c''''}{t^2 r^{2d}} \\
&= O\left( \frac{1}{r^{2d} }\right)
\sum_{1\leq t\leq \frac{s_2-s_1+1}{2}} 
O\left(\frac{1}{t}\right) 
\\ 
&= O \left(\frac{\log n}{r^{2d}}\right).
\end{aligned}
\end{equation}
It is clear that the contribution of $D_0$ to the formula is $O \left(\frac{1}{r^{2d}}\right)$. By symmetry, the contribution of $D_t$ with $\frac{s_2-s_1}{2}< t\leq s_2-s_1+1$ is also $O \left(\frac{1}{t r^{2d}}\right)$,
and thus
   $$
        \mathcal{R}(x \longleftrightarrow y) = O\left(\frac{\log n}{r^{2d}} \right).$$
    Recall that we assumed that $x,y$ belong to some dense cubes $\mathrm{Q_i}$ and $\mathrm{Q_j}$,  that at the same time are inside of some parallelepipeds in the same horizontal strip and separated by an even number of parallelepipeds. Before we lift these conditions, we first provide a useful construction in the following claim:
    \begin{claim}\label{claim:construction} Suppose $x',y'\in L_1$ with $x'\in \mathrm{Q_i}'$ and $y' \in \mathrm{Q_j}'$ with both cubes being dense, and such that $\mathrm{Q_i}'$ and $\mathrm{Q_j}'$ are connected by a path $\mathcal{L}$ of dense cubes of length $O(\log n)$. Then $$\mathcal{R}(x'\longleftrightarrow y')=O\left(\frac{\log n}{r^{2d}}\right).$$
    \end{claim}
    \textit{Proof of claim.} Let $x'$ send an amount of $\frac{1}{\lfloor \beta r^d \rfloor -1 }$ flow to the $\lfloor \beta r^d \rfloor-1$ previously selected vertices of $\mathrm{Q_i}'$. From there on, let the previously selected vertices send a uniformly distributed flow to the selected $\lfloor \beta r^d \rfloor$ vertices that belong to the dense cube that follows in the path connecting $\mathrm{Q_i}'$ and $\mathrm{Q_j}'$. We continue distributing in a uniform way the flow until we arrive until $\mathrm{Q_j}'$. Once the flow reached the $\lfloor \beta r^d \rfloor-1$ selected vertices of $\mathrm{Q_j}'$, send all the flow back to $y'$. Denote this flow by $\theta$. We then have 
    \begin{equation}
        \mathcal{R}(x'\longleftrightarrow y') \leq \sum_{\substack{z \in \mathrm{Q_i}'\\x'\sim z}}\theta^2(x',z) +\sum_{\substack{z\in \mathrm{Q_j}'\\y'\sim z}} \theta^2(y',z) + \sum_{\mathrm{Q} \in \mathcal{L}} \sum_{\substack{u\in \mathrm{Q} \\ v \in L_1\\u\sim v}} \theta^2(u,v).
    \end{equation}
    As before, the first two sums contribute $O (r^d) O(1/r^{2d})=O(1/r^d)$. For the third sum, note that there are $O(\log n)$ dense cubes in $\mathcal{L}$, and between two consecutive dense cubes there are $O(r^{2d})$ edges with flow $O\left(\frac{1}{r^{2d}}\right)$ on each of them. Thus
$$
        \mathcal{R}(x'\longleftrightarrow y') = O\left( \frac{1}{r^{d}}\right) + \sum_{|\mathcal{L}|} O\left(r^{2d}\right) O\left(\frac{1}{r^{4d} }\right) =O\left(\frac{\log n}{r^{2d}}\right),
    $$
    and the claim is proved.\qed
    
    We now continue with the proof of the theorem and show how Claim~\ref{claim:construction} allows us to prove the bound on the effective resistance for any $x\in \mathrm{C_i}' \in \mathrm{T}_{s_1,r_1}$ and $y\in \mathrm{C_j}' \in \mathrm{T}_{s_2,r_2}$ in the same strip. Assume first that both parallelepipeds do not belong to strips on the boundary. First, if $x$ and $y$ are separated by an odd number of parallelepipeds (that is, $s_2-s_1$ even), we find $x'$ in a parallelepiped adjacent to $x$, at graph distance in the renormalized lattice of cubes $O(\log n)$ from $x$. Thus, by Claim~\ref{claim:construction}, $\mathcal{R}(x\longleftrightarrow x') = O\left( \frac{\log n}{r^{2d}}\right)$, and since $x'$ and $y$ are separated by an even number of parallelepipeds, by the above, $ \mathcal{R}(x'\longleftrightarrow y) = O\left( \frac{\log n}{r^{2d}}\right)$, and finally by Lemma~\ref{lem:triangleinequality}, also $ \mathcal{R}(x\longleftrightarrow y) = O\left( \frac{\log n}{r^{2d}}\right)$.
    As before, by the same idea, we may extend our bounds also to arbitrary vertices inside the same parallelepiped (in cubes not satisfying all restrictions) and to vertices in parallelepipeds of a strip on the boundary.

To generalize the inequality between any pair of strips we use the same  idea as in Section~\ref{sec:upperboundfixedradius} from Figure~\ref{fig:resistance_different_strips}: for any pair of strips, we find another vertex $z$ in the intersection of the strips that contain $x$ and $y$ and that intersect at some parallelepiped $\mathrm{T}$ that contains the vertex $z$ such that $\mathcal{R}(x\longleftrightarrow z)$ and $\mathcal{R}(y\longleftrightarrow z)$ are $O\left(\frac{\log n}{r^{2d}}\right)$ (such $z$ must exist, as there is a logarithmic amount of crossings of dense cubes in each strip), we apply again the triangle inequality and we are done. 

To generalize to different slices, we proceed once more as in the last step of the proof of Theorem~\ref{thm:upperboundcov_fixed_r}, where we saw that we may go from one slice to another in at most a fixed amount of steps. Subsequent applications of the triangle inequality then immediately yield the desired result.

We thus finished proving that a.a.s.\ the effective resistance between two vertices $x,y \in \cG_n$ that belong to dense cubes that form part of the giant component of dense cubes is $O\left(\frac{\log n}{r^{2d}}\right)$. 

To conclude, observe that the a.a.s.\ statement holds simultaneously for all $x,y$, as the number of edges with positive probability $p$ and with at least one endpoint inside a fixed parallelepiped is $O(r^{2d}\log^2 n)$ a.a.s.\ simultaneously for all parallelepipeds (it holds inside one fixed $T$ with probability $1-o(n^{-2})$).
    \end{proof}

\begin{lemma}\label{lem:upper_bound_nonfixedr_second_case}
    Let $r_0$ be a large enough constant, and let $r_0\leq r\leq (1-\eps)r_c $. Then
    \begin{equation}
        \tau_{cov}(L_1) = O (n \log^{2} n).
    \end{equation}
\end{lemma}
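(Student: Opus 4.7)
My strategy will be to reduce the problem to the resistance bound on the dense-cube backbone already supplied by Theorem~\ref{thm:nonfixed_r_upper_bound_backbone} and then apply Theorem~\ref{thm:upperboundcov_chandra}. Let $V_b$ denote the set of vertices of $G$ lying in dense cubes that belong to $\mathcal{C}^{site}_{\mathrm{Q}}(n)$; since the giant cluster of dense cubes contains $\Theta(n/r^d)$ cubes, each carrying $\Theta(r^d)$ points, we have $V_b \subseteq L_1(G)$ with $|V_b| = \Theta(n)$. Theorem~\ref{thm:nonfixed_r_upper_bound_backbone} already gives $\mathcal{R}(u_1 \longleftrightarrow u_2) = O(\log n / r^{2d})$ for all $u_1, u_2 \in V_b$, so it only remains to relate an arbitrary vertex of $L_1(G)$ to $V_b$ in the resistance metric.

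To this end, I will first control the size of the non-backbone regions using the very subcritical nature of the non-dense site percolation. As observed in the proof of Theorem~\ref{thm:nonfixed_r_upper_bound_backbone}, a cube fails to be dense with probability at most $e^{-\kappa r^d}$, so standard exponential-decay estimates for the finite components in supercritical site percolation give $\mathbb{P}(\text{non-backbone component through }\mathrm{Q_i}\text{ has }\ge L\text{ cubes}) \le e^{-\kappa' r^d L}$ for some $\kappa'>0$. A union bound over the $O(n/r^d)$ starting cubes then shows that a.a.s.\ every non-backbone component has at most $O(\log n/r^d)$ cubes, hence Euclidean diameter $O(\log n / r^{d-1})$. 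Choosing $\beta$ (and thereby $\kappa$) small enough makes this bound at most $C \log n / r^{d-1}$, where $C$ is the constant of Theorem~\ref{thm:chemical_distance_friedrich}. In particular every vertex of $L_1(G)$ sits at Euclidean distance less than $C\log n / r^{d-1}$ from $V_b$, and Corollary~\ref{cor:chemical_distance_rgg} supplies a map $\pi: L_1(G) \to V_b$ with $d_G(v, \pi(v)) = O(\log n / r^d)$ for every $v$.

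Assembling the pieces, Remark~\ref{rem:effective_resistance} yields $\mathcal{R}(v \longleftrightarrow \pi(v)) = O(\log n / r^d)$, and the triangle inequality of Lemma~\ref{lem:triangleinequality} combined with Theorem~\ref{thm:nonfixed_r_upper_bound_backbone} gives
\begin{equation}
    \max_{v, w \in L_1(G)} \mathcal{R}(v \longleftrightarrow w) \le 2 \cdot O\!\left(\frac{\log n}{r^d}\right) + O\!\left(\frac{\log n}{r^{2d}}\right) = O\!\left(\frac{\log n}{r^d}\right),
\end{equation}
where the final equality uses $r \ge r_0 \ge 1$. Plugging $|E(L_1(G))| = \Theta(nr^d)$ from Lemma~\ref{lem:sizergg} and $|V(L_1(G))| = \Theta(n)$ into Theorem~\ref{thm:upperboundcov_chandra} immediately produces $\tau_{cov}(L_1(G)) = O(nr^d \cdot \log n \cdot \log n /r^d) = O(n\log^2 n)$. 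The main obstacle is purely technical: one must calibrate $\beta$ so that the exponential-decay constant governing the non-backbone components yields a Euclidean diameter bound matching the specific constant $C$ in Theorem~\ref{thm:chemical_distance_friedrich}; once this calibration is done, the remainder of the argument is a direct assembly of the already-developed machinery, and in particular requires no new flow construction beyond that of Section~\ref{sec:upper_bound_cov_rgg_fixed_r} and Subsection~\ref{subsec:nonfixed_r_upper_bound_backbone}.
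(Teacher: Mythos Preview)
Your proposal is correct and follows essentially the same route as the paper: bound the size of the non-backbone pieces so that every vertex of $L_1(G)$ lies within Euclidean distance $O(\log n/r^{d-1})$ of $V_b$, invoke Corollary~\ref{cor:chemical_distance_rgg} to convert this to graph distance $O(\log n/r^d)$, combine via the triangle inequality with the backbone bound of Theorem~\ref{thm:nonfixed_r_upper_bound_backbone}, and finish with Lemma~\ref{lem:sizergg} and Theorem~\ref{thm:upperboundcov_chandra}. The only cosmetic difference is that the paper bounds the non-backbone components by an explicit Peierls-type count on the diagonally connected outer boundary of non-dense cubes (together with an isoperimetric step), whereas you invoke generic exponential decay for finite components; both arguments exploit that non-dense cubes occur with probability $e^{-\kappa r^d}$ and lead to the same $O(\log n/r^d)$ cube-count.
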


\begin{proof}
   By Lemma~\ref{lem_Timar}, every maximal connected component of cubes, which might contain dense and non-dense cubes and that has empty intersection with the giant component of dense cubes, has a diagonally connected interior boundary of non-dense cubes. As there are at most $c_d^{2L}$ diagonally connected paths of length $L$ when the starting vertex is fixed (as before, one may fix a spanning tree, traverse each edge twice, and at each step there are at most $c_d=c_d(d)$ choices to select the next vertex), and since the probability that a cube is non-dense is, by standard Chernoff bounds for Poisson random variables,  $e^{-\kappa r^d}$ for some $\kappa=\kappa(\beta) > 0$, the probability that there exists a diagonally connected path of non-dense cubes of length greater or equal than $L$ is, by a union bound over all $(1+o(1))n$ starting vertices, at most
$$
(1+o(1))n e^{-L\kappa r^d}(c_d^{2})^L,
    $$
    which tends to 0 for $L= c\frac{\log n}{r^d}$ with $c > 0$ large enough. 
    Hence, for this choice of $L$, a.a.s.\, in the renormalized lattice of cubes $\mathrm{Q}$ of volume $M^d$, the boundary of any connected component excluding the giant is at most of size $L$. Thus, by a standard isoperimetric inequality, a.a.s. every component different from the giant of the associated Bernoulli site percolation has volume at most $O(L^{d/(d-1)})$. In other words, for every site, at distance $L$ one finds a site belonging to the giant component (of the renormalized lattice of dense cubes), and for any vertex $x \in \cG_n$, at Euclidean distance at most $O(Lr/\mu) = O(\log n/r^{d-1})$ we find a vertex $y \in \cG_n$ belonging to a cube of the giant component of the Bernoulli site percolation.

    Then, by Corollary~\ref{cor:chemical_distance_rgg}, a.a.s.\ we have $d_G(x,y)=O\left( \log n/r^d  \right)$, and thus $\mathcal{R}(x \longleftrightarrow y) = O(\log n/r^d)$. If $z \in \cG_n$ is another vertex that belongs to some dense cube in the backbone $\mathcal{C}^{\text{site}}_{\mathrm{Q}}(n)$ then by~ Theorem~\ref{thm:nonfixed_r_upper_bound_backbone} we have $\mathcal{R}(y\longleftrightarrow z) = O(\log n/r^{2d})$, and hence by the triangle inequality, also
    \[
    \mathcal{R}(x \longleftrightarrow z) = O(\log n /r^d) + O(\log n/r^{2d}) = O(\log n/r^d).
    \]
   Since by Lemma~\ref{lem:sizergg}, the number of edges of the giant component satisfies $|E| = \Theta(nr^d)$ a.a.s., by Theorem~\ref{thm:upperboundcov_chandra}, we have that
    \[
    \tau_{cov} (L_1) = O(n \log^2 n),
    \]
 and the lemma follows.
\end{proof}
\textbf{Proof of Theorem~\ref{ref:main1}.} By combining Theorem~\ref{thm:lower_bound_cov_r_not_fixed} and Theorem~\ref{thm:upperboundcov_fixed_r} with Lemma~\ref{lem:upper_bound_nonfixedr_second_case}, Theorem~\ref{ref:main1} follows immediately. \qed

 \section{Upper bound right above the threshold for connectivity for \texorpdfstring{$d=2$}{d=2}}\label{sec:upper_bound_above_connectivity}

 Perhaps surprisingly, for $\mathcal{G}(n,r,2)$, no upper bound of $O(n \log n)$ on the cover time of the simple random walk had been known right above the connectivity threshold. For $d \ge 3$, in the already mentioned paper of Cooper and Frieze~\cite{cooper2011cover} the result was obtained, but their result does not apply for $d=2$. As mentioned in the introduction, for $d=2$, in \cite{covertimergg} the authors found the asymptotic order of the cover time only for a radius that was by a constant factor larger than the connectivity threshold. In this section, we close the gap left by these papers for $d=2$. We note that it is enough to prove the upper bound, as the lower bound is immediate by the general lower bound on cover times for connected graphs by Feige~\cite{feige1995lower}. Our main result will therefore be the following theorem (since our result is only new for $2$ dimensions close to the connectivity threshold, and we thus state it only for 2 dimensions, although it holds more generally by straightforward generalizations of cells to cubes):

 \begin{theorem}\label{thm:upperboundrg}
     Let $\eps>0$ be an arbitrarily small constant, and consider $d=2$. Suppose that $ (1+\eps)\frac{\log n}{\pi} \le r^2$, and also $r^2=\Theta(\log n)$. Then, a.a.s., for any two vertices $x,y \in \cG_n$,
     \begin{equation}
         \mathcal{R}(x\longleftrightarrow y) = O\left(\frac{1}{\log n}\right).
     \end{equation}
 \end{theorem}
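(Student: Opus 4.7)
The plan is to combine the backbone resistance bound of Theorem~\ref{thm:nonfixed_r_upper_bound_backbone} with a local flow argument that connects every vertex of $G$ to the backbone $\mathcal{C}^{site}_{\mathrm{Q}}(n)$ via many short disjoint paths.

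First, applying Theorem~\ref{thm:nonfixed_r_upper_bound_backbone} with $d=2$ and $r^2=(1+\eps)\log n/\pi$ gives a.a.s.\ $\mathcal{R}(u\longleftrightarrow v)=O(\log n/r^4)=O(1/\log n)$ for every pair $u,v$ lying in dense cubes of the backbone, so the bound already holds inside the backbone and it remains to route flow from every other vertex into it. Two auxiliary estimates will be needed. \emph{(i) Minimum degree.} A Chernoff bound for $\Poi((1+\eps)\log n)$ combined with a union bound gives, with $g(\alpha)=1-\alpha+\alpha\log\alpha$, that $\Pv[\deg(v)<\alpha(1+\eps)\log n]\le n^{-(1+\eps)g(\alpha)}$. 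Since $(1+\eps)g(\alpha)\to 1+\eps>1$ as $\alpha\to 0$, one can choose $\alpha=\alpha(\eps)>0$ so that a.a.s.\ every vertex has degree at least $\delta\log n$ with $\delta=\alpha(1+\eps)$. \emph{(ii) Non-backbone confinement.} By the Peierls-type argument of Lemma~\ref{lem:upper_bound_nonfixedr_second_case}, tuning the density parameter $\beta$ defining dense cubes so that $\Pv[\text{cube non-dense}]$ decays faster than $n^{-1}$, a.a.s.\ every non-backbone component in the renormalized lattice has $O(1)$ cubes; hence the vertex set $S$ of the non-backbone component containing $x$'s cube satisfies $|S|=O(r^d)=O(\log n)$ and lies in a Euclidean ball of radius $O(r)$.

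The key step is: for every vertex $x$ outside the backbone, exhibit $k=\Omega(\log n)$ internally vertex-disjoint paths $P_1,\ldots,P_k$ of length $O(1)$ from $x$ to distinct backbone vertices $z_1,\ldots,z_k$. Let $\partial S$ be the set of backbone vertices in renormalized cubes within cube-distance $\mu$ of $S$: because those cubes are dense, $|\partial S|=\Omega(r^d)=\Omega(\log n)$, and since all $G$-neighbors of $S$-vertices lie within Euclidean distance $r$ (hence in $S\cup\partial S$), the induced subgraph $H=G[S\cup\partial S]$ satisfies $|V(H)|=O(\log n)$ while every vertex of $S$ has $H$-degree $\Omega(\log n)$. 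Moreover, $H$ has constant graph-diameter because $S$ has constant renormalized diameter and adjacent tessellation cubes are fully connected by the definition of $\mu$. A Menger-type min-cut argument in this dense bounded-diameter graph supplies the required $\Omega(\log n)$ internally vertex-disjoint short paths.

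Given the paths, I would build a unit flow $\theta$ from $x$ to any backbone vertex $z$ as $\theta=\theta_1+\theta_2$, where $\theta_1$ sends flow $1/k$ along each $P_i$ from $x$ to $z_i$ and $\theta_2$ sends $1/k$ from each $z_i$ to $z$ along an optimal backbone flow. Then $E(\theta_1)=O(k\cdot(1/k)^2)=O(1/\log n)$, while Cauchy--Schwarz yields $E(\theta_2)\le k^{-1}\sum_{i=1}^k \mathcal{R}(z_i\longleftrightarrow z)=O(1/\log n)$, and $(a+b)^2\le 2a^2+2b^2$ gives $E(\theta)=O(1/\log n)$, so $\mathcal{R}(x\longleftrightarrow z)=O(1/\log n)$. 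For an arbitrary $y\in V(G)$, a symmetric construction at $y$ (or Theorem~\ref{thm:nonfixed_r_upper_bound_backbone} if $y$ is already in the backbone) gives $\mathcal{R}(y\longleftrightarrow z)=O(1/\log n)$, and the triangle inequality (Lemma~\ref{lem:triangleinequality}) then yields $\mathcal{R}(x\longleftrightarrow y)=O(1/\log n)$. The main obstacle will be the rigorous min-cut lower bound in $H$: when $x$'s cube is renormalized-adjacent to a backbone cube the full biclique between adjacent cubes supplies $\Theta(r^d)$ disjoint length-$1$ paths directly, but when $x$ sits in the interior of a multi-cube non-backbone component the paths must chain through a bounded number of intermediate cubes (possibly with small vertex counts) while maintaining internal disjointness; simultaneously tuning $\beta$ in (ii) so that both the non-backbone confinement and the intermediate cube population needed for the chaining hold is the main bookkeeping hurdle.
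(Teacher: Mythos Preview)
Your overall architecture---backbone bound via Theorem~\ref{thm:nonfixed_r_upper_bound_backbone}, then $\Omega(\log n)$ short internally vertex-disjoint paths from any vertex into the backbone, then the triangle inequality---is exactly the paper's. The gap is in how you produce those paths.

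Your justification of (ii) is impossible as stated. A cube $\mathrm{Q}$ has side $r/\sqrt{5}$ and hence area $r^2/5=(1+\eps)\log n/(5\pi)$; even the probability that $\mathrm{Q}$ is \emph{empty} is $e^{-r^2/5}=n^{-(1+\eps)/(5\pi)}\approx n^{-0.07}$, so no choice of $\beta$ makes $\Pv[\text{cube non-dense}]=o(n^{-1})$. The Peierls argument still gives non-backbone components of $O(1)$ cubes, but with a constant on the order of $5\pi/(1+\eps)$, and the cubes in such a component can have arbitrarily few vertices (including zero).

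This breaks the min-cut step you flag as ``the main obstacle'', and it is not a bookkeeping hurdle but a genuine obstruction. If $x$ lies a few cubes into the interior of its non-backbone component and the intermediate cubes along the way carry only $O(1)$ vertices each, those vertices form a cut of size $O(1)$ between $x$ and $\partial S$, so Menger gives only $O(1)$ disjoint paths. The minimum-degree bound (i) does not help here: it controls the total population of $B(x,r)$ but says nothing about cubes two or three steps away, whose per-cube mean $r^2/5$ is far below $\log n$ and whose emptiness probability is only polynomially small. There is no single $\beta$ that simultaneously keeps non-backbone components bounded \emph{and} forces every intermediate cube to carry $\Omega(\log n)$ vertices.

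The paper sidesteps the cube structure for this local step. It fixes a square $S$ of side $\Theta(\sqrt{\log n})$ containing $x$ and a backbone vertex $y$, and shows directly that $G$ restricted to $S$ is $(\eps'\log n)$-connected: if removing fewer than $\eps'\log n$ vertices left a small component $\mathcal{C}_1$, then either an annulus of area $(1+\eps-\eps'')\log n$ around $\mathcal{C}_1$ or two half-disks of area $(1+\eps)\log n/2$ on opposite sides of $\mathcal{C}_1$ would have to contain $<\eps'\log n$ points, and \emph{these} regions are large enough that the probability is $n^{-1-\Omega(1)}$, surviving the union bound. Menger then yields the $\Omega(\log n)$ disjoint paths inside $S$, and since $S$ has side $O(r)$, shortening each to an induced path gives constant length. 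The essential difference is that the paper's separating regions have area $\Theta(\pi r^2)=\Theta((1+\eps)\log n)$ rather than $\Theta(r^2/5)$, which is exactly the factor you lose by routing through the tessellation.
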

 \textit{Proof of Theorem~\ref{ref:main2} assuming Theorem~\ref{thm:upperboundrg}:} By Lemma~\ref{lem:sizergg}, a.a.s., $|E|=\Theta(nr^d) =\Theta( n\log n)$. Theorem~\ref{ref:main2} then follows immediately by Theorem~\ref{thm:upperboundcov_chandra}. \qed
 
 \par\noindent It thus remains to prove  Theorem~\ref{thm:upperboundrg}. The approach of the proof is similar to the one in Section~\ref{sec:upper_bound_covTime_rgg_nonFixed_r}: first we prove that the upper bound on the effective resistance is true for any pair of vertices inside the \emph{backbone} of dense cubes, and in a second step we then prove that the resistance between any two vertices is of the desired order. We now proceed to the details:

 \textit{Proof of Theorem~\ref{thm:upperboundrg}:} Recall that $\mathcal{C}^{\text{site}}$ denotes the giant component of dense cubes of the auxiliary graph of cubes as in Section~\ref{sec:upper_bound_covTime_rgg_nonFixed_r}. By Theorem~\ref{thm:nonfixed_r_upper_bound_backbone}, the result holds for any $x,y \in \cG_n$ with $x \in \mathrm{Q_i}, y \in \mathrm{Q_j}$, and $\mathrm{Q_i}, \mathrm{Q_j} \in \mathcal{C}^{\text{site}}_{\mathrm{Q}}(n)$. It thus remains to show that the effective resistance between any vertex $x \in \cG_n$   that is in a cube, being dense or non-dense, that is not in the giant component of the auxiliary Bernoulli site process, and some vertex $y \in \cG_n$, belonging to a dense cube in the giant component of the auxiliary Bernoulli site process is at most $O\left(\frac{1}{\log n}\right)$: by the triangle inequality (Lemma~\ref{lem:triangleinequality}) together with Theorem~\ref{thm:nonfixed_r_upper_bound_backbone}, this will then show that for any pair of vertices $x,y \in \cG_n$, we have $\mathcal{R}(x\longleftrightarrow y) =O\left(\frac{1}{\log n}\right)$. 
 
 In order to show the claimed bound on the effective resistance, it suffices to show that for every vertex $x$, we find $y \in \mathrm{Q_j} \in \mathcal{C}^{\text{site}}_{\mathrm{Q}}(n)$ together with a set $\mathcal{P}_{x,y}$ of $\Omega(\log n)$ internally vertex-disjoint paths between $x$ and $y$ of constant length (in terms of graph distance). Indeed, in this case one can split evenly the unit flow emanating from $x$ between all these paths, such that the flow on each such edge of each path is $O(1/\log n)$, and the resistance, by Definition~\ref{def:effectiveresistance}, thus satisfies
$$\mathcal{R}(x\longleftrightarrow y) 
= \sum_{e \in P \in \mathcal{P}_{x,y}} O(1/\log^2 n) = O\left(\frac{1}{\log n}\right).
$$
We will follow this plan in detail now. We first assume that $x$ is at Euclidean distance at least $3C\sqrt{\log n}$ from all boundaries of $\Lambda_n$, where $C$ is a sufficiently large constant.

Recall the definition of $L=c\log n/r^d$ for $c$ large enough from Section~\ref{sec:upper_bound_covTime_rgg_nonFixed_r}, and so in particular $L$ is a large constant here. Recall also that by the proof of Theorem~\ref{thm:nonfixed_r_upper_bound_backbone}, for every $x \in \cG_n$ there exists $y \in \cG_n$ at distance $rL /\sqrt{5}=O(\log n)$, with $y  \in \mathrm{Q_j} \in \mathcal{C}^{\text{site}}_{\mathrm{Q}}(n)$. In particular, by construction of dense cubes inside this giant component, observe that $y \in L_1$.    We thus may find, an axis-parallel square $S$ of side length $3C \sqrt{\log n}$ containing both $x$ and $y$, such that both $x$ and $y$ are at Euclidean distance at least say $C\sqrt{\log n}$ from the boundary of $S$, and such that the left bottom corner of $S$ is an integer coordinate. We now show that we cannot separate $x$ from $y$ by removing less than $\eps' \log n$ many vertices of $\cG_n$ inside $S$ (and therefore we cannot separate $x$ and $y$ in $\cG_n$ either), for some $0 < \eps' < \eps$. Since this will hold a.a.s.\ for all $x,y$ simultaneously (with $S$ changing according to the positions of $x,y$, and also for vertices $x \in \cG_n$ close to the boundary of $\Lambda_n$, as we show below), we conclude that the graph $\cG_n$ is $\eps' \log n$-connected. By Menger's theorem, for every pair of vertices there exist $\eps' \log n$ internally vertex-disjoint paths inside $S$ between any pair of vertices, and we will show then that they can be chosen of constant length. 

To do so, we adapt ideas from a classic $k$-connectivity argument on random geometric graphs, in spirit our argument is similar to the proof of Theorem 1.1, in particular Proposition 5.2 in Penrose's paper~\cite{Pen99} applied with $k=\eps'\log n$. 
First, we show that after removing less than $\eps'\log n$ many vertices, $\cG_n$ a.a.s.\ does not have two connected components $\mathcal{C}_1$ and $\mathcal{C}_2$ both of Euclidean diameter larger than $C \sqrt{\log n}/5$ ($C$ the same as before). In fact, we show that this cannot happen inside $S$:
indeed, if this were the case, then consider the cells of the tessellation with cells of sidelength $r/\sqrt{20}$ forming the external boundary of the larger of the two components inside $\Lambda_n$, say $\mathcal{C}_1$. 
Then, for each component of $S \setminus \mathcal{C}_1$, by Lemma~\ref{lem_Timar}, we know that its interior boundary is diagonally connected; in particular, this holds for $\mathcal{C}_2$	(which is one of these components). 
The part of the exterior boundary of $\mathcal{C}_2$ (viewed as a subset of $\mathbb Z^d$) that lies in the infinite component of $\mathbb Z^d$ is also diagonally connected (once again by Lemma~\ref{lem_Timar}, considering only $\mathcal{C}_2$ and its complement for defining the infinite component). Now, when taking the intersection of this boundary with $S$, since $|\mathcal{C}_2| \le |\mathcal{C}_1|$, it is of the same order (for example Lemma 12 of~\cite{treewidth}).
Now, this set is not necessarily diagonally connected anymore. However, since both $x$ and $y$ are at distance at least $C\sqrt{\log n}$ from the boundary of $S$ (by construction of $S$), there must exist a diagonally connected part inside $S$ of length $C\sqrt{\log n}/5$ as well. The number of cells of this exterior boundary inside $S$ is at least $(C \sqrt{\log n} /20)/(r/\sqrt{20}) \ge C/4$. Observe that  the maximal Euclidean distance between two vertices in any two diagonally adjacent cells is at most $\sqrt{3r^2/20}\le r/2$, and hence, if there were any vertex inside a cell belonging to the external boundary of $\mathcal{C}_1$, such a vertex would be connected by an edge to a vertex in $\mathcal{C}_1$ (indeed, if the neighboring square was hitting only an edge of $\mathcal{C}_1$, its distance to the edge were at most $r/2$, and from there to the closer endpoint of the edge at most $r/2$, so in total at most $r$). Now, denote by $X$ the number of vertices in a fixed set of $C/4$ cells of radius $r/\sqrt{20}$. We have by standard Chernoff bounds for Poisson random variables
$$
\Pr(X \le \eps'\log n)=\Pr(\mathrm{Poi}(Cr^2/80) \le \eps'\log n) \le (eC/(\pi \eps'80))^{\eps'\log n}e^{-Cr^2/80} \le e^{-Cr^2/200},
$$
assuming $C$ large enough and $\eps' < \eps$.
 The number of  connected sets of size $C/4$ is at most $O(n 8^{2C/4})$: first, one has to find the starting cell, for which there are $O(n)$ choices; then, one can find a spanning tree with $C-1$ edges that defines the connected set, and traversing all edges exactly twice, say in depth-first search order, we cover the whole connected set; the result follows by noting that at each step there are at most $8$ choices for choosing the next cell. Hence, by Chernoff bounds together with a union bound over all such animals, the probability of having in one of them less than $\eps'\log n$ many vertices is at most 
$O(n8^{C/4}e^{-r^2 C/200})=O(n8^{C/4}e^{-(1+\eps)C\log n/(200\pi) })=o(n^{-2})$, where the equality follows by choosing $C$ large enough. Hence, for $C$ large, by taking a union bound over all pairs of vertices $x,y \in \cG_n$ sufficiently far away from the boundary, a.a.s.\ such two components cannot exist. 

Next, if after removing less than $\eps'\log n$ vertices there would exist a connected component $\mathcal{C}_1$ of Euclidean diameter at most $C\sqrt{\log n}/5$ inside $S$, this component a.a.s.\ has $O(\log n)$ vertices. If the Euclidean diameter of such a component $\mathcal{C}_1$ were less than $\eps''\sqrt{\log n}$ (for $\eps''< \eps' < \eps$ small enough; we define the Euclidean diameter of an isolated vertex to be zero and include this in this case), then note that the probability to have at most $\eps' \log n$ points in the annulus obtained by taking the difference of the ball of outer radius $r$ and inner radius $\eps''\sqrt{\log n/\pi}$ centered around any of the vertices in $\mathcal{C}_1$, is, once again by Chernoff bounds for Poisson variables, at most 
$$
O(\log n)\Pr(\mathrm{Poi}((1+\eps-\eps'')\log n) \le \eps'\log n)=(e(1+\eps-\eps'')/\eps')^{\eps'\log n}e^{-(1+\eps-\eps''+o(1))\log n}\le n^{-1-\eps'''},
$$
for some $\eps''' > 0$, where we assumed $\eps'' < \eps'$. By a union bound over all at most $n(1+o(1))$ components such a component a.a.s.\ does not exist. 

Otherwise, if the Euclidean diameter were at least $\eps''\sqrt{\log n/\pi}$ (but still at most $C\sqrt{\log n}/5$, and therefore still completely inside $S$), then observe that the smallest axis-parallel rectangle $R$ containing all vertices of such a component has to have without loss of generality horizontal length at least $(\eps''\sqrt{\log n/\pi})/2$. Almost surely, such rectangle (of area $O(\log n)$) is defined by at most $4$ points, and hence, for every fixed $S$, there are at most $O(\log^4 n)$ choices for the rectangle. Therefore, the halfcircle of radius $r$ to the left of the leftmost point of $R$ as well as the halfcircle to the right of the rightmost point of $R$ have to contain at most $\eps'\log n$ points (otherwise by removing less than $\eps'\log n$ vertices $\mathcal{C}_1$ is not disconnected). The probability for this to happen is, by a union bound over all $O(n)$ choices for $S$ and thus in total over all at most a.a.s.\ $O(n \log^4 n)$ choices of rectangles, by another Chernoff  bound for Poisson random variables, at most 
$$
O(n \log^4 n)\Pr(\mathrm{Poi}((1+\eps)\log n) \le \eps'\log n)
\le (e(1+\eps)/\eps')^{\eps'\log n}n^{-\eps+o(1)}=o(1), 
$$
where we again assumed $\eps' < \eps$. Hence, a.a.s.\ such a component also cannot exist. Thus, we cannot separate $x \in \cG_n$ from $y \in \cG_n$ by eliminating less than $\eps' \log n$ vertices inside $S$. By Menger's theorem, there are thus $\eps'\log n$ internally vertex-disjoint paths between $x$ and $y$, and since the paths are all inside $S$, they can be shortened if needed to be of constant length in terms of graph distance. This case is done.

We now explain the modifications when $x\in \cG_n$ is close to the boundary: first, we claim that there exists $\eps'' > 0$ sufficiently small such that for any vertex $x$ at Euclidean distance at most $3C\sqrt{\log n}$ from at least one boundary of $\Lambda_n$, there are at least $\varepsilon'' \log n$ neighbors of $x$, all of them at distance at least $d_E(x,\partial\Lambda_n)+\delta r$ from $\partial \Lambda_n$. Indeed, suppose first that $x$ is not in a corner. Denoting by $B(x,r)$ the Euclidean ball centered at $x$ of radius $r$ and by $\mathrm{vol}(\cdot)$ its area, we then have $\mathrm{vol}(B(x,r)\cap \Lambda_n) \ge  \mathrm{vol}(B(x,r))/2=\pi r^2/2 $. Consider the restricted area $B(x,r)\cap \{z\in \Lambda_n: d_E(z,\partial \Lambda_n)>d_E(x,\partial \Lambda_n)+\eta r\}$ for some $\eta>0$. For any $\eta > 0$, there is $\alpha=\alpha(\eta)>0$ such that the area of this region is, by assumption on $r$, larger than $\mu:=(\frac{1}{2} + \alpha) \log n$. By Chernoff bounds, for $c > 0$ small enough,
$$
\mathbb{P}(Po(\mu)\ge (1-\delta)\mu) \ge 1-e^{-\mu(\delta+(1-\delta)\log(1-\delta))}.
$$
Choosing $\delta=(\mu-c)/\mu$ we get that this probability
is for small enough $c > 0$ at least
$$
1-\exp(-(\mu-c)-\mu(1-\delta)\log(1-\delta)).
$$
Since $-(1-\delta)\log(1-\delta) \le (1-\delta)^{0.9}$ for $\delta$ close to $1$ and $(1-\delta)^{0.9}\mu = c^{0.9}\mu^{0.1}$, this probability is at least
$$
1-\exp\left(-(\frac12+\alpha-c+c^{0.9}(\frac12+\alpha)^{0.1})\log n\right) \ge 1-\exp\left(-(\frac12+\alpha/2)\log n)\right)=1-n^{-\frac12-\frac{\alpha}{2}}.
$$
 Since there are a.a.s.\ $O(n^{1/2}\log^{1/2}n)$ at distance at most $O(r)$ from the boundary of $\Lambda_n$, by a union bound, this holds a.a.s.\ for all such vertices. The adaptation to the corners is similar, now noting that instead of half the area of a circle we get a fourth, but the union bound is now over $O(r^2)$ vertices at most. The claim is proved.  

Now, starting from a vertex $x$ with $d_E(x,\partial \Lambda_n)<3C\sqrt{\log n}$, for $\eps''' < c$, we may thus find $\eps''' \log n$ neighbors all at distance at least $\eta r$ further away from the boundary from $\Lambda_n$ than $x$. For each of these neighbors $x_i$, by the previous claim, we find one more neighbor, being again at least $\eta r$ further away from the boundary of $\Lambda_n$ than $x_i$, and by choosing $\eps'''$ sufficiently small, we may assume all of them disjoint from all previously found neighbors. Iterating at most $3C/\eta=O(1)$ many times, we find a total of $(3C/\eta)\eps''' \log n$ vertices, which can be chosen to be all disjoint by choosing $\eps'''$ small enough so that $ (3C/\eta)\eps''' < c$. Note that the final vertices of this construction then are at distance at least $3C\sqrt{\log n}$ away from the boundary of $\Lambda_n$. For these we apply the previous result, so that the latter vertices are all connected to $y$ via $\Omega(\log n)$ internally vertex-disjoint paths, which are inside some $S$, and can therefore be chosen to be of constant length. Concatenating these with the original paths starting from $x$ (which by construction are all of constant length as well), we find thus $\Omega(\log n)$ internally vertex-disjoint paths from $x$ to $y$, and this case is done as well. The theorem follows.

 \section{Concluding remarks and future work}\label{sec:conclusion}

We showed that the cover time undergoes a jump at the threshold of connectivity, decreasing from $\Theta(n \log^2 n)$ to $\Theta(n \log n)$ a.a.s. It is therefore natural to ask what happens right at the threshold of connectivity: to this end, assume that $r^d=(\log n+ f(n))/V_d$, where $f(n)$ may be positive or negative, and $|f(n)|=o(\log n)$. If $f(n)$ is negative and $|f(n)|=\omega(1)$, the same argument as in Case 2 of Section~\ref{sec:lower_bound_fixed_radius}, 
can be used to show that the number of vertices of degree $1$ inside the giant component is a.a.s. $\Omega(nr^d V_d e^{-r^d V_d})=\Omega(e^{-f(n)})$ (which is positive since $f(n)$ is negative). Hence, we obtain a set $V'$ of those vertices of degree 1, $x,y \in \cG_n$ for which $\log(|V'|)=-f(n)$, and any pair of vertices $x,y \in V'$ satisfies  $\mathcal{R}(x\longleftrightarrow y) \geq 1$. Thus, by Theorem~\ref{thm:lowerboundcov}, a.a.s. $\tau_{\cov} \ge \gamma n \log n |f(n)|$. We do not know for which values of $f(n)$ this lower bound is tight. On the other hand, the upper bound of Theorem~\ref{thm:nonfixed_r_upper_bound_backbone} shows that for any two vertices $x,y$ belonging to a dense cube of the auxiliary giant component (as defined there), $\mathcal{R}(x\longleftrightarrow y) =O(1/\log n)$. The value of the cover time is thus determined by the maximal resistance between pairs of vertices, one of which being outside this auxiliary giant, but still inside the giant. It is subject of further work to obtain results on these pairs.

More generally, in the same spirit as the very precise works on the cover time for the random walk on $\mathbb{Z}^d$, one could ask for more precise estimates on the cover time for any $r \le r_c$. However, since we do not have very precise estimates on the Green's function, this seems to be out of reach with the current techniques.

 \bibliographystyle{abbrv}
    \bibliography{ref}
\end{document}